\newif\iffinal
\else\usepackage[notref,notcite]{showkeys}\fi
\newenvironment{enumeratei}{\begin{enumerate}[\upshape (i)]}{\end{enumerate}}
\newenvironment{enumeratea}{\begin{enumerate}[\upshape (a)]}{\end{enumerate}}
\numberwithin{equation}{section}
\numberwithin{figure}{section}
\numberwithin{table}{section}
\newtheorem{thm}{Theorem}[section]
\newtheorem{lem}[thm]{Lemma}
\newtheorem{cor}[thm]{Corollary}
\newtheorem{prop}[thm]{Proposition}
\newtheorem{defn}[thm]{Definition}
\renewcommand{\leq}{\leqslant} 
\renewcommand{\geq}{\geqslant}
\newcommand{\ind}{\mathds{1}}
\newcommand{\eps}{\varepsilon}
\newcommand{\set}[1]{\left\{#1\right\}}
\newcommand{\ie}{\emph{i.e.,}}
\newcommand{\eg}{\emph{e.g.,}}
\newcommand{\equald}{\stackrel{\mathrm{d}}{=}}
\newcommand{\probc}{\stackrel{\mathrm{P}}{\longrightarrow}}
\newcommand{\probd}{\stackrel{\mathrm{d}}{\Longrightarrow}}
\def\qed{ \hfill $\blacksquare$}  
\let\ga=\alpha \let\gb=\beta \let\gc=\gamma  
     \let\gl=\lambda        \let\go=\omega   \let\gs=\sigma \let\gt=\tau 
\let\gC=\Gamma \let\gD=\Delta   
         \let\gS=\Sigma  
\newcommand{\cA}{\mathcal{A}}\newcommand{\cB}{\mathcal{B}}\newcommand{\cC}{\mathcal{C}}
\newcommand{\cD}{\mathcal{D}}
\newcommand{\cI}{\mathcal{I}}
\newcommand{\cN}{\mathcal{N}}
\newcommand{\cR}{\mathcal{R}}
\newcommand{\cW}{\mathcal{W}}
\newcommand{\vone}{\mathbf{1}}
\newcommand{\vC}{\mathbf{C}}
\newcommand{\vR}{\mathbf{R}}
\newcommand{\vU}{\mathbf{U}}\newcommand{\vV}{\mathbf{V}}\newcommand{\vW}{\mathbf{W}}
\newcommand{\vX}{\mathbf{X}}\newcommand{\vZ}{\mathbf{Z}}
\newcommand{\vt}{\mathbf{t}}\newcommand{\vu}{\mathbf{u}}
\newcommand{\vx}{\mathbf{x}}
\newcommand{\mvzero}{\boldsymbol{0}}
\newcommand{\bR}{\mathbb{R}}
\newcommand{\dR}{\mathds{R}}
\newcommand{\sA}{\mathscr{A}}
\newcommand{\sE}{\mathscr{E}}
\newcommand{\sN}{\mathscr{N}}
\newcommand{\sS}{\mathscr{S}}
\DeclareMathOperator{\E}{\mathds{E}}
\DeclareMathOperator{\pr}{\mathds{P}}
\DeclareMathOperator{\var}{Var}
\DeclareMathOperator{\cov}{Cov}
\DeclareMathOperator{\argmax}{argmax}
\DeclareMathOperator{\avg}{avg} 
\title[Large Average Submatrix]{Energy Landscape for large average submatrix detection problems in Gaussian random matrices}
\date{}
\subjclass[2010]{Primary: 62G32, 60F05, 60G70. }
\keywords{Energy landscape, Extreme value theory,  Central limit theorem, Stein's method.}
\author[Bhamidi]{Shankar Bhamidi$^1$}
\address{$^1$Department of Statistics and Operations Research, 304 Hanes Hall, University of North Carolina, Chapel Hill, NC 27599}
\author[Dey]{Partha S.~Dey$^2$}
\address{$^2$Courant Institute of Mathematical Sciences, New York University, 251 Mercer Street, New York, NY 10012}
\author[Nobel]{Andrew B.~Nobel$^1$}
\email{bhamidi@email.unc.edu, partha@cims.nyu.edu, nobel@email.unc.edu}
\begin{document}

\begin{abstract}
The problem of finding large average submatrices of a real-valued matrix arises in the exploratory analysis of data from a variety of disciplines, ranging from genomics to social sciences.  
In this paper we provide a detailed asymptotic analysis of large average submatrices of an $n \times n$ Gaussian random matrix. 
The first part of the paper addresses global maxima.
For fixed $k$ we identify the average and the joint distribution of the $k \times k$ submatrix having 
largest average value.   
As a dual result, we establish that the size of the largest square sub-matrix with average 
bigger than a fixed positive constant is, with high probability, equal to one of two consecutive 
integers that depend on the threshold and the matrix dimension $n$. 
The second part of the paper addresses local maxima.  Specifically we consider 
submatrices with dominant row and column sums that arise as the local optima
of iterative search procedures for large average submatrices. For fixed $k$, we identify the limiting average value and joint distribution of a $k \times k$ submatrix  conditioned to be a local maxima. In order to understand the density of such local optima and explain the quick convergence of such iterative procedures, we analyze the number $L_n(k)$ of local maxima, beginning with exact asymptotic expressions for the mean and fluctuation behavior of $L_n(k)$. 
For fixed $k$, the mean of $L_{n}(k)$ is $\Theta(n^{k}/(\log{n})^{(k-1)/2})$ while the standard deviation is $\Theta(n^{2k^2/(k+1)}/(\log{n})^{k^2/(k+1)})$.
Our principal result is a Gaussian central limit theorem for $L_n(k)$ that is based on a new variant of Stein's method. 
\end{abstract}
\maketitle



\section{Introduction}
\label{sec:int}

The study of random matrices is an important and active area in modern probability.  The majority of 
the existing work on random matrices has focused on their spectral properties, often in the Gaussian
setting.  By contrast, in this paper we are interested in exploring the structural properties of random
matrices by means of their extreme submatrices, in particular, submatrices with large average.
As motivation for this point of view, we note that many of the large data sets that are now
common in biomedicine, genomics, and the study of social networks can be represented in the form 
of a data matrix with real valued entries.  A common first step in the exploratory analysis, or ``mining'', 
of such data sets is the search for unusual structures or patterns that may be of potential scientific importance.  
Structures of practical interest include distinguished submatrices of the data matrix. 
The search for such submatrices is referred to as biclustering, cf.\ \cite{madeira-survey}.  
Despite their simplicity, submatrices distinguished by having large average value have
proven useful in a number of applications.  In genomics analyses, the $(i,j)$ element of the data matrix 
typically represent the value of a measured biological quantity indexed by $i$ 
(such as gene expression or copy number) in the $j$-th sample.  In this case, a large
average submatrix may capture an interesting biological interaction between a group of samples and a group
of variables (see~\cite{shabalin2009finding} and the references therein).    
In the study of social networks, it is often meaningful to derive a data matrix whose entries 
represent the strength of interactions between different 
individuals in a network.  In this case, large average submatrices indicate groups of individuals having strong interactions  within the network, and  for the subsequent detection and identification of (potentially overlapping) communities~\cite{fortunato2010community}.

In this paper we provide a detailed asymptotic analysis of large average submatrices of a Gaussian random
matrix. We consider the case in which the random matrix and the submatrices of interest are square, \ie\ they
have the same number of rows and columns.  The first part of the paper addresses global maxima.
For fixed $k$, we identify the limiting average value and joint distribution of the $k \times k$ submatrix  
with largest average.  The proof relies in part on a refined Gaussian comparison result that
may be of independent interest.
As a dual result, we establish two-point concentration for the size of the largest 
$k \times k$ submatrix with average greater than a fixed positive constant.  

The second part of the paper addresses submatrices that are local maxima, in the sense 
that their row and column sums dominate those in the
``strips'' defined by their column and row sets, respectively.  Submatrices of this sort arise as the fixed points of a natural iterative search procedure for large average submatrices \cite{shabalin2009finding} that has proven useful in the
analysis of genomic data.  For fixed $k$, we study distributional asymptotics for a $k \times k$ submatrix conditioned to be a local maxima, and we obtain a precise asymptotic
expression for the probability that a given submatrix is a local maxima. In order to understand the density of such local optima and explain the quick convergence of such iterative procedures, we study the number of local optima, $L_n(k)$ in an $n \times n$ random matrix.  
We derive refined bounds on the expectation and variance of $L_n(k)$, showing, in particular, that
\begin{align*}
\E L_n(k) &= \Theta\left( \left(\frac{n}{\sqrt{\log n}} \right)^k \cdot \sqrt{\log n} \right) 
\\ 
\mbox{ and } \  
\var(L_n(k)) &=  \Theta\left( \left( \frac{n}{\sqrt{\log n}} \right)^{2k^2 / (k+1)} \right) .
\end{align*}
The non-standard scaling of the mean reflects unexpectedly weak dependence between 
row and column dominance and the non-standard scaling of the variance arises in part from subtle and persistent correlations 
between pairs of locally optimal submatrices.  Using these results, we establish that the average of a typical local maxima is within a factor of $1/\sqrt{2}$ of the global maxima. 
Also due to the complex correlation structure of the local maxima,
existing methods do not yield a central limit theorem for $L_n(k)$.  
Nevertheless, we establish
a central limit theorem for $L_n(k)$ using a new variant of Stein's method.

In the past several years there has been renewed interest (see \eg~\cites{mahoney2010algorithmic,mahoney2011randomized}) in the
study of local optima as a tool for exploratory data analysis.
The study of optimization problems, and properties of optimal or locally optimal configurations 
for random data, is now a flourishing subbranch of discrete probability (see \eg\ \cites{steele-book,aldous2009dynamic}) and have arisen in a wide array of models, ranging from genetics and 
NK fitness models see~\cites{durrett-limic,evans-steinsaltz,limic-pemantle} to statistical physics and spin glasses, see~\cite{montan-book}. We defer a full fledged discussion to Section~\ref{sec:open}.

\subsection{Outline of the Paper}

The principal results of the paper, and a discussion of related work, are presented in the next section.  
Results for global maxima including a two point localization phenomena are described in
Sections \ref{sec:glob-opt-res} and \ref{sec:two-pt}. We then describe an iterative search procedure used in practice for finding large average submatrices in Section \ref{sec:loc-opt-res}.  Results for local maxima are described in this Section.  
We then provide more background for the problems studied in this paper and connections between our work to existing literature in Section \ref{sec:open}. 
Section \ref{sec:est} collects some of the technical estimates we need for the proofs of the main results. The reader is urged to skim through these results and then come back to them as and when they are used.  We complete the proofs about global optima in Section \ref{sec:proofs-gopt}. We prove the structure theorem for local optima in Section \ref{sec:proofs-lopt} whilst the variance asymptotics for the number of local optima are proved in Section \ref{sec:var-asymp}. Finally we present the proof of the central limit theorem for number of local optimal sub matrices in Section~\ref{sec:clt}.

\section{Statement and Discussion of Principal Results}
\label{sec: PrincRes}

\subsection{Basic Definitions and Notation}
\label{sec:not}
For integers $a \leq b$ define $[a,b] := \{a,a+1,\ldots,b-1,b\}$; when $a=1$, the interval $[1,b]$ will be denoted by $[b]$. 
Boldface capital letters, e.g.~$\vW$, will denote matrices, with corresponding the lower case, 
e.g.~$w_{ij}$, denoting their entries.
Let $\vW = (( w_{ij} ))_{i,j\ge 1}$ be an infinite two dimensional array of independent standard normal random variables
defined on a common probability space.  Let $\vW^n = ((w_{ij}))_{i, j=1}^{n}$ be the $n \times n$ Gaussian
random matrix constituting the upper left corner of $\vW$.  In what follows $[n]$ denotes the set 
$\{1,2,\ldots,n\}$.  For $n \geq 1$ and $1 \leq k \leq n$ let 
\[
\sS_{n}(k) := \{ I \times J : I,J \subseteq [n] \mbox{ with } |I|=|J|=k \}
\]
be the family of index sets of $k \times k$ submatrices of $\vW^n$.    
For $\gl = I \times J \in \sS_{n}(k)$, let $\vW_{\gl}  = (( w_{ij}))_{i \in I, j \in J }$ be the submatrix of
$\vW^n$ (also a submatrix of $\vW$) with index set $\gl$.  
Note that $|\sS_n(k)| = {n \choose k}^2$. 
For index sets $\gl, \gc \in \sS_{n}(k)$, we write $|\gl \cap \gc| = (s,t)$ to denote the fact that $\gl$ and $\gc$ share $s$ rows 
and $t$ columns. Note that $\gl \cap \gc = \emptyset$ if and only if $|\gl \cap \gc| = (0,0)$.

For any finite, real-valued matrix $\vU = (( u_{ij} ))$ let
\[
\avg(\vU) = |\vU|^{-1} \sum u_{ij}
\]
be the average of the entries of $\vU$, where $|\vU|$ denotes the number of entries of $\vU$.  
For $x\in \dR$ let
\[
\Phi(x) := \pr(Z \leq x) \ \text{ and } \ \bar{\Phi}(x) := 1-\Phi(x) 
\]
be the cumulative distribution function and complementary cumulative distribution function, respectively,
of a standard normal random variable $Z$. 

In considering extremal submatrices of a Gaussian random matrix, we make use of, and extend, 
classical results on the extreme values of the standard normal.  In what follows,
\begin{equation}
\label{eqn:an-def}
a_N := \sqrt{2 \log N}	
\end{equation}
and 
\begin{equation}
\label{eqn:bn-def}
	b_N := \sqrt{2 \log N} - \frac{\log (4 \pi \log N)}{2 \sqrt{2 \log N}}
\end{equation}
refer to the scaling and centering constants, respectively, for the maximum of $N$ independent
standard Gaussian random variables.

\subsection{Structure Theorem for Global Optima}
\label{sec:glob-opt-res}
We begin by investigating the average value and joint distribution of the $k \times k$ submatrix of
$\vW^n$ having maximum average, which we refer to as the global optimum.  To this end let
\[
\gl_n(k) :=  \argmax\{ \avg(\vW_{\gl}) : \gl \in \sS_{n}(k) \}
\]

\noindent
be the the index set of the global optimum, and let
\[
M_{n}(k) :=  \max\{ \avg(\vW_{\gl}) : \gl \in \sS_{n}(k) \} .
\]

\noindent
be its average value.  The following theorem characterizes the structure of the global optimum.
Note that in the first two results concerning the value $M_n(k)$, the value of  
$k$ is allowed to grow with $n$. 

\vskip.2in

\begin{thm}\label{thm:globalmax}
Let $\gl_n(k)$ and $M_{n}(k)$ be the index set and value of the globally optimum $k \times k$
submatrix of $\vW^n$, and let
$N = {n \choose k}^2$.
Let $a_{N}$ and $b_{N}$ be the scaling and centering constants in \eqref{eqn:an-def} and \eqref{eqn:bn-def}. 
\begin{enumeratea}
\item\label{item:maxa}  
There exists a constant $c>0$ such that as $n$ tends to infinity, for any sequence $k = k_n$ 
with $k \leq c \log n / \log\log n$, 
\[
a_N(kM_{n}(k) - b_N) \probd -\log T 
\]
where $T\sim\text{Exp}(1)$. 
\vskip.18in

\item\label{item:maxb} 
In general, if $k = k_n$ satisfies $c \log n / \log\log n \leq k \leq \exp(o(\log n))$ 
and $\go_n$ is any sequence tending to infinity, then

\[
\pr\left( \frac{-k \, \go_{n} (\log\log n)^2}{\log n} \, \leq \, a_{N}(k M_{n}(k) - b_N) \, \leq \, \go_{n} \right) \to 1
\]
as $n$ tends to infinity.
\vskip.18in

\item\label{item:maxc} 
For each fixed integer $k \geq 1$,
\[
\vW_{\gl_n(k)} - \avg(\vW_{\gl_n(k)}) \vone \vone^\prime 
\, \probd \, 
\vW^k - \avg(\vW^k) \vone\vone^\prime,
\] 
where $\vone$ is the $k \times 1$ vector of ones.
\vskip.18in
\end{enumeratea}
\end{thm}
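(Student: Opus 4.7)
The plan is to prove the three parts of Theorem \ref{thm:globalmax} by reducing everything to extreme-value theory for a family of correlated standard Gaussians. Set $Y_{\gl}:=k\cdot\avg(\vW_{\gl})$; then each $Y_\gl$ is standard normal, $kM_n(k)=\max_{\gl\in\sS_n(k)}Y_\gl$, and $\cov(Y_\gl,Y_\gc)=st/k^2$ when $|\gl\cap\gc|=(s,t)$. The total number of competitors is $N={n\choose k}^2$, and the centering/scaling constants $a_N,b_N$ are tuned exactly for the Gumbel limit of the max of $N$ i.i.d.\ standard normals.

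For part (a), I would apply a Gaussian comparison of Slepian/Berman type to show that $\max Y_\gl$ behaves like the maximum of $N$ independent standard normals in the limit. The Berman bound gives $|\pr(\max Y_\gl\le x)-\Phi(x)^N|$ controlled by a sum over $\gl\neq\gc$ of terms of order $|\rho_{\gl\gc}|\exp(-x^2/(1+|\rho_{\gl\gc}|))$ with $x\sim b_N=\sqrt{2\log N}$; grouping pairs by overlap $(s,t)$ one has roughly $\binom{n}{k}^2\binom{k}{s}\binom{n-k}{k-s}\binom{k}{t}\binom{n-k}{k-t}$ pairs at overlap $(s,t)$ and correlation $st/k^2$, and one checks that this combinatorial weight is killed by the Gaussian decay as long as $k\le c\log n/\log\log n$. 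Once the comparison is established, the standard fact $a_N(\max Z_i-b_N)\probd-\log T$ for $Z_i$ i.i.d.\ $\sN(0,1)$ transfers immediately to $a_N(kM_n(k)-b_N)$. The refined Gaussian comparison advertised in the introduction will likely be the main technical ingredient; it is the place one actually pays for the dependence.

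For part (b), the upper bound is a one-line union bound: $\pr(kM_n(k)>b_N+\go_n/a_N)\le N\bar\Phi(b_N+\go_n/a_N)\to 0$ since $N\bar\Phi(b_N)=O(1)$ by the definition of $b_N$ and the Gaussian tail identity $\bar\Phi(b_N+y/a_N)\sim \bar\Phi(b_N)e^{-y}$. The lower bound, where $k$ is allowed to grow as fast as $\exp(o(\log n))$, is the harder direction; I would use a second-moment method on $R_n:=|\{\gl:Y_\gl\ge t_n\}|$ with $t_n:=b_N-k\go_n(\log\log n)^2/(a_N\log n)$. The mean $\E R_n=N\bar\Phi(t_n)$ blows up like $\exp(\Theta(k\go_n(\log\log n)^2/\log n))$, while $\E R_n^2$ is controlled by splitting the double sum over $(\gl,\gc)$ according to overlap and using bivariate Gaussian tail bounds; the worst overlap contribution is absorbed by the factor $(\log\log n)^2/\log n$ in $t_n$. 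Paley--Zygmund then gives $\pr(R_n\ge 1)\to 1$, completing the lower bound.

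For part (c), the crucial identity is the orthogonal decomposition $\vW_\gl=\avg(\vW_\gl)\vone\vone'+\vW_\gl^c$ where $\vW_\gl^c:=\vW_\gl-\avg(\vW_\gl)\vone\vone'$ lies in the hyperplane of zero-sum $k\times k$ matrices and, by Gaussianity, is \emph{independent} of $\avg(\vW_\gl)$. By row/column permutation symmetry $\gl_n(k)$ is uniform on $\sS_n(k)$, so it suffices to fix one $\gl$ and show the conditional law of $\vW_\gl^c$ given the event $E_\gl:=\{\gl_n(k)=\gl\}$ converges to the unconditional law $\vW^k-\avg(\vW^k)\vone\vone'$. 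Split the competing submatrices into those disjoint from $\gl$ (contributing $\max_{\gc\text{ disjoint}}Y_\gc$, which is independent of $\vW_\gl$ entirely) and those sharing at least one row or column with $\gl$ (of which there are $O(k^2n^{2k-1})$, strictly fewer than $N\sim n^{2k}/(k!)^2$ for fixed $k$). The disjoint part makes $E_\gl$ a function of $Y_\gl$ and data independent of $\vW_\gl^c$, so conditioning on this part leaves the law of $\vW_\gl^c$ unchanged. The main obstacle, and the part I expect to occupy most of the work, is showing that the overlapping competitors perturb the conditional law of $\vW_\gl^c$ by only a $o(1)$ amount in total variation: the entries of overlapping $\vW_\gc$'s share rows/columns with $\vW_\gl$ and hence correlate with $\vW_\gl^c$, so one must combine Theorem \ref{thm:globalmax}(a), which forces $Y_\gl$ to be of order $b_N/k$, with the fact that overlapping $\gc$'s are few, to argue that on $E_\gl$ none of the overlapping competitors are anywhere near binding with high probability, and therefore removing them changes the conditional law of $\vW_\gl^c$ negligibly.
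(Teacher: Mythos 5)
Your plan matches the paper's proof in all three parts: (a) is the Gaussian comparison lemma plus the combinatorial overlap sum of Lemma \ref{lem:varestimate}, (b) is Slepian/union bound for the upper tail and the same second-moment overlap computation for the lower tail, and (c) is exactly the paper's decomposition into the independent pair $(\avg(\vW_\gl),\vW_\gl-\avg(\vW_\gl)\vone\vone')$, the split of competitors into disjoint and overlapping ones, and the reduction to showing that, conditional on $\avg(\vW^k)\geq x_n$, no overlapping competitor beats $\vW^k$ with probability tending to one. You have correctly identified the two technical cruxes (the refined comparison bound and the overlapping-competitor estimate via the outside-overlap averages), so this is essentially the same argument.
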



The matrix $\vW^n$ contains only $n^2$ independent random variables.  In spite of this,
Part~\eqref{item:maxa} of Theorem~\ref{thm:globalmax} asserts that the average of the globally optimal
$k \times k$ submatrix has the same 
distributional asymptotics as the maximum of $N = {n \choose k}^2$ independent 
$\text{N}(0,k^{-2})$ random variables, provided that $k \leq c \log{n}/\log\log{n}$.  
(We expect that the same result holds if $k \ll \log n$, but the extension in this setting appears to require new ideas.) 
Part~\eqref{item:maxb} of the theorem ensures that the first order asymptotics of $M_n(k)$ 
remain unchanged as long as $\log k \ll \log n$. 
Part~\eqref{item:maxc} asserts that the joint distribution of $\vW_{\gl_n(k)}$ is the same 
as that of a $k \times k$ Gaussian random matrix once one 
subtracts their respective sample means.  In other words, asymptotically, the only thing remarkable
about the global maximum is its average value.

The proof of Theorem~\ref{thm:globalmax} relies on two auxiliary results.  The first is a combinatorial
bound, given in Lemma \ref{lem:varestimate} below, that includes refined second moment type 
calculations for the number of $k \times k$ submatrices having average greater than $b_N$.  
The second is the following Gaussian comparison lemma, which may be of independent interest. 

\vskip.1in

\begin{lem}\label{lem:compare}
Fix $N \geq 2$ and let $(X_{1}, \ldots,X_{N})$ be jointly Gaussian random variables with 
\[
\E(X_{i})=0 \ \ \E(X_i^2)=1 \ \mbox{ and } \ \E(X_iX_j) = \gs_{ij} \in (-1,1) \mbox{ for }1\leq i < j \leq N .
\] 
Let $Z_{1}, \ldots,Z_{N}$ be independent standard Gaussian random variables. For any $u \geq 1$,
\begin{align*}
& \left| \pr \left( \max_{1 \leq i \leq N} X_i \leq u \right) - \pr \left(\max_{1 \leq i \leq N} Z_i \leq u \right) \right| \\[.2in]
& \qquad
\leq \sum_{i \neq j} 2 \min\{1, |1-\theta_{ij}| \, u(1+(1\wedge\theta_{ij})u) \} \, 
\bar{\Phi}(u) \, \bar{\Phi}((1\wedge\theta_{ij})u) \\[.05in]
& \qquad\leq  \sum_{i \neq j, \gs_{ij} \neq 0}  
2 \sqrt{\frac{1+\gs_{ij}^{+}}{1-\gs_{ij}^{+}}}\cdot \bar{\Phi}(u) ^2 \cdot e^{u^2\gs_{ij}^+/(1+\gs_{ij}^+)}
\end{align*}
where $\theta_{ij}=\sqrt{({1-\gs_{ij}})/({1+\gs_{ij}})}$ and $x^{+}=\max\{x,0\}$. 
\end{lem}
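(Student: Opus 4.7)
My plan is to reduce the multivariate comparison to a sum of bivariate tail comparisons via a Slepian-type interpolation of covariances, and then to establish the sharp pairwise estimate by rotating each pair $(X_i,X_j)$ into its principal axes. For the first step, set $\Sigma(t)=tI+(1-t)\Sigma$ for $t\in[0,1]$ and let $X(t)$ be a centered Gaussian vector with covariance $\Sigma(t)$. Put $\psi(t)=\Pr(\max_k X_k(t)\leq u)$. By Price's identity $\partial f_{\Sigma}/\partial\sigma_{ij}=\partial^2 f_{\Sigma}/\partial x_i\partial x_j$ applied to the joint Gaussian density, integration in $t$ together with the change of variable $\rho=(1-t)\sigma_{ij}$ yields
\[
\psi(0)-\psi(1)=\sum_{i<j}\int_0^{\sigma_{ij}}\phi_{\rho}(u,u)\,R_{ij}(\rho)\,d\rho,
\]
where $\phi_{\rho}(u,u)=(2\pi\sqrt{1-\rho^2})^{-1}\exp(-u^2/(1+\rho))$ and $R_{ij}(\rho)\in[0,1]$ is the conditional probability that the remaining coordinates are $\leq u$ given $X_i=X_j=u$. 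Bounding $R_{ij}(\rho)\leq 1$ and using Plackett's identity $\int_0^{\sigma}\phi_{\rho}(u,u)\,d\rho=\Pr(X_i>u,X_j>u)-\bar\Phi(u)^2$ reduces the lemma to the pairwise bound
\[
\bigl|\Pr(X_i>u,X_j>u)-\bar\Phi(u)^2\bigr|\leq 2\min\{1,|1-\theta_{ij}|\,u(1+(1\wedge\theta_{ij})u)\}\,\bar\Phi(u)\,\bar\Phi((1\wedge\theta_{ij})u).
\]

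For the second step, fix a pair with correlation $\sigma\in(-1,1)$. Writing $X_i=\sqrt{(1+\sigma)/2}\,A+\sqrt{(1-\sigma)/2}\,B$ and $X_j=\sqrt{(1+\sigma)/2}\,A-\sqrt{(1-\sigma)/2}\,B$ with $A,B$ independent standard Gaussians gives
\[
\Pr(X_i>u,X_j>u)=\int_0^{\infty}2\phi(b)\,\bar\Phi\bigl(u\sqrt{2/(1+\sigma)}+b\theta\bigr)\,db,\qquad\theta=\sqrt{(1-\sigma)/(1+\sigma)},
\]
and the same identity at $\sigma=0$, $\theta=1$ evaluates to $\bar\Phi(u)^2$. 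The difference is estimated by the mean-value theorem in $\sigma$: differentiating the integrand and bounding the resulting Gaussian factor using $\bar\Phi(u+x)\leq\bar\Phi(u)\exp(-ux-x^2/2)$ together with the Mills estimate $\bar\Phi(u)\leq\phi(u)/u$ produces the polynomial prefactor $|1-\theta|\,u(1+(1\wedge\theta)u)$ multiplying $\bar\Phi(u)\bar\Phi((1\wedge\theta)u)$. When this linear estimate exceeds one, the trivial inequalities $\Pr(X_i>u,X_j>u)\leq\bar\Phi(u)$ and $\bar\Phi(u)^2\leq\bar\Phi(u)\leq\bar\Phi((1\wedge\theta)u)$ supply the same shape of bound with the minimum taking the value $1$.

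For the final step, the second displayed inequality in the lemma follows by applying $\bar\Phi(v)\leq\phi(v)/v$ to both tail factors in the first bound. Using $\phi(u)\phi(\theta u)=(2\pi)^{-1}\exp(-u^2/(1+\sigma))$ when $\sigma\geq0$ and rearranging yields the prefactor $2\sqrt{(1+\sigma^+)/(1-\sigma^+)}$ multiplied by $\bar\Phi(u)^2\exp(u^2\sigma^+/(1+\sigma^+))$; when $\sigma<0$ we have $\sigma^+=0$ and the bound degenerates to the trivial $2\bar\Phi(u)^2$, which is indeed a valid estimate on the left-hand side since $\Pr(X_i\leq u,X_j\leq u)\leq\Phi(u)\wedge\Phi(u)$. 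The principal technical difficulty lies in Step~2: the $\min\{1,\cdot\}$ factor and the appearance of $(1\wedge\theta)u$ inside the second tail factor—both essential refinements over the classical Berman--Plackett comparison that together yield a $u^{-2}$ improvement—require careful case analysis separating the regimes of small and large $|1-\theta|$ and precise tracking of the exponential decay after the rotation above.
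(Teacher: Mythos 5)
Your Step~1 (the covariance interpolation, bounding the conditional probability of the remaining coordinates by one, and Plackett's identity reducing everything to $\bigl|\pr(X_i>u,X_j>u)-\bar{\Phi}(u)^2\bigr|=\bigl|\int_0^{\sigma_{ij}}\phi_\rho(u,u)\,d\rho\bigr|$) is exactly the paper's reduction and is fine. The gap is in Step~2. The mean value theorem in $\sigma$ bounds the integral by $|\sigma|\sup_{\rho}\phi_\rho(u,u)$, and for $\sigma>0$ the supremum is $\phi_\sigma(u,u)=\phi(u)\phi(\theta u)/(\theta(1+\sigma))$, which carries a factor $\theta^{-1}$; since the integrand grows from $\phi(u)^2\asymp e^{-u^2}$ to $\phi_\sigma(u,u)\asymp e^{-u^2/(1+\sigma)}$ over $[0,\sigma]$, the sup-times-length estimate overshoots the integral by an unbounded factor as $\sigma\to1$. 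That regime is not rescued by your fallback either: when the minimum equals $1$ you must show $\pr(X_i>u,X_j>u)\le 2\bar{\Phi}(u)\bar{\Phi}(\theta u)$ with $\theta<1$, and the trivial bound $\pr(X_i>u,X_j>u)\le\bar{\Phi}(u)$ misses this by the factor $1/(2\bar{\Phi}(\theta u))$, which is of order $e^{\theta^2u^2/2}$ (take $\theta=1/2$, $u$ large). What is actually needed there is the genuine bivariate tail estimate $\pr(X_i>u,X_j>u)\le(1+\sigma)\bar{\Phi}(u)\bar{\Phi}(\theta u)$ (Lemma~\ref{lem:biv-norm-tail} in the paper), which does not follow from the inequalities you list. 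The paper avoids both problems by evaluating the $\rho$-integral exactly via the substitution $x=\sqrt{(1-\rho)/(1+\rho)}$, which converts it into $2u^{-1}\phi(u)\,|\bar{\Phi}(u)-\bar{\Phi}(\theta u)|$ and makes both branches of the minimum elementary consequences of the concavity of $\Phi$ and the Mills bounds.

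There is a second, smaller error in your last step: applying $\bar{\Phi}(v)\le\phi(v)/v$ to both tail factors produces $\phi(u)\phi(\theta u)/(\theta u^2)$, and comparing this to the target $\theta^{-1}\bar{\Phi}(u)^2e^{u^2(1-\theta^2)/2}$ requires $\phi(u)/u\le\bar{\Phi}(u)$, which is the Mills inequality in the wrong direction. The second displayed inequality of the lemma is instead the statement $\theta u\,e^{(\theta u)^2/2}\bar{\Phi}(\theta u)\le u\,e^{u^2/2}\bar{\Phi}(u)$ for $\theta\le1$, i.e.\ the monotonicity of $x\mapsto xe^{x^2/2}\bar{\Phi}(x)$ (recorded in Lemma~\ref{lem:gauss_tail}); you need that monotonicity, not the one-sided tail bound.
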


\vskip.1in

We note that related Gaussian comparison results can be found in the literature (see~\cites{berman64, gal72,ross-book,ls02}).  The more precise upper bound of Lemma \ref{lem:compare} is needed here, in particular, 
to establish parts (\ref{item:maxa})) and (\ref{item:maxb})) of Theorem \ref{thm:globalmax} for
sequences $k_n$ that tend to infinity with $n$. In contexts where one has positive correlations and the second moment method is expected to give good information on the size of the maxima, the above bounds reduce even further. More precisely, let $M(u)=\sum_{i=1}^{N} \ind\{X_i \geq u\}$, where $X_1,\ldots, X_N$ are as in the statement of
Lemma \ref{lem:compare}.  If $\gs_{ij}\geq 0$ for all $i,j$, then $\theta_{i,j} \leq 1$ for all $i,j$ and 
for each $u \geq 1$ we have
\begin{align*}
0 
& \leq \, \pr\left( \max_{1 \leq i \leq N} X_i \leq u \right) - \pr\left( \max_{1 \leq i \leq N} Z_i \leq u \right) \\[.1in]
& \leq \, \sum_{i \neq j}  \bar{\Phi}(u) \, \bar{\Phi}(\theta_{ij} u) 
 \leq \, \sum_{i \neq j}  \pr( X_i \geq u, X_j \geq u ) 
 = \, \E(M(u)^2) - N \bar{\Phi}(u) .
\end{align*}
The first inequality above follows from Slepian's lemma, the second from the first inequality in Lemma \ref{lem:compare},
and the third from Lemma \ref{lem:biv-norm-tail}. Thus if one has good control on the second moment of $M(u)$, this shows that distributional asymptotics for the maxima are the same as in the \emph{i.i.d.}~regime. This is the path we shall follow.

\subsection{Two-point localization}
\label{sec:two-pt}
For fixed $k \geq 1$, Theorem~\ref{thm:globalmax} characterizes the growth of $M_n(k)$, 
the maximum average value of a $k \times k$ submatrix of $\vW^n$, with increasing 
dimension $n$.  As a dual consideration, one may fix a threshold
$\gt > 0$ and, for each $n$, study the largest $k$ for which there exists a $k \times k$ submatrix 
of $\vW^n$ with average greater than $\gt$.  Formally, define
\[
K_n(\tau) \ = \ \max\left\{ k : k \leq n \mbox{ and} \max_{\gl \in \sS_{n}(k)} \avg(\vW_{\gl})\geq \gt \right\} .
\]
\vskip.1in
\noindent
We extend the definition of the standard binomial coefficient
to non-integer valued arguments by defining 
\begin{equation}
\label{eqn:nfact-def}
	{n \choose x}:= \frac{n!}{\Gamma(x+1)\Gamma(n-x+1)}
\end{equation}
for $x \in [0,n]$, where $\Gamma(\ga):=\int_{0}^{\infty}x^{\ga-1}e^{-x}\; dx$ 
is the usual Gamma function. 
Consider the equation   
\begin{equation}
\label{eqn:tildk-def}
	{n \choose x}^2 \bar{\Phi}(x \gt) :=1. 
\end{equation}  
It is shown in \cite{sn10} that, for $n$ sufficiently large, there is a unique solution 
$\tilde{k}_n \ = \ \tilde{k}_{n}(\gt)$ of (\ref{eqn:tildk-def}), and that $\tilde{k}_n$ satisfies the relation  
\begin{equation}
\label{eqn:tildk-asymp}
	\tilde{k}_n = \frac{4}{\gt^2}\log \frac{e\gt^2 n}{4\log n} + 
	\left(\frac{4}{\gt^2} -1\right)\frac{\log\log n}{\log n} + O\left(\frac{|\log \gt|}{\gt^2\log n}\right) .
\end{equation}
It is shown in Theorem 1 of \cite{sn10} that the integer-valued random variable $K_n(\tau)$ lies in a finite interval 
around $\tilde{k}_n$, in particular
\[
-\frac{4}{\gt^2} - \frac{12\log 2}{\gt^2} - 4 \leq K_n(\tau) -\tilde{k}_n \leq 2 .
\]
eventually almost surely.
Here we refine this result to a two point localization, with a slightly weaker form of convergence. An almost sure convergence can be easily proved by using Borel-Cantelli lemma and the given probability estimates.
Also note that, similar results are known in the random graph literature, \eg\ for size of largest cliques \cite{bollobas-clique} and the chromatic number in random graphs \cite{an05}.  
Let $k_n^*$ denote the integer closest to $\tilde{k}_n$. 

\vskip.1in

\begin{thm}
\label{thm:loc}
For fixed $\gt > 0$, 
\[
\pr( K_n(\tau) \in \{ k_n^* - 1, k_n^* \} ) \to 1
\]
as $n$ tends to infinity. 
\end{thm}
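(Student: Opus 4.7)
The plan is to deduce Theorem~\ref{thm:loc} from a first-moment / second-moment argument applied to the counting variable
\[
Y_n(k) := \sum_{\gl \in \sS_n(k)} \ind\{ \avg(\vW_\gl) \geq \gt \},
\]
using that $\{K_n(\gt) \geq k\} = \{Y_n(k) \geq 1\}$. Since $\avg(\vW_\gl) \sim N(0,k^{-2})$, the identity $\E Y_n(k) = \binom{n}{k}^2 \bar\Phi(k\gt)$ together with \eqref{eqn:tildk-def} anchors $\E Y_n$ at the real point $\tilde k_n$. The first input I would develop is the sharp ratio asymptotic
\[
R(k) := \frac{\E Y_n(k+1)}{\E Y_n(k)} = \frac{(n-k)^2}{(k+1)^2}\cdot\frac{\bar\Phi((k+1)\gt)}{\bar\Phi(k\gt)} \asymp \frac{(\log n)^2}{n^2},
\]
valid uniformly for integer $k$ in a bounded window around $\tilde k_n$; this follows by combining Mill's ratio with \eqref{eqn:tildk-asymp}. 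Extending $\E Y_n$ smoothly to a real variable via the $\Gamma$-convention \eqref{eqn:nfact-def}, one finds $\frac{d}{dx}\log \E Y_n(x) \big|_{x = \tilde k_n} = -2\log n + O(\log\log n)$.

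\emph{Step 1 (upper bound).} Since $k_n^\ast + 1 \geq \tilde k_n + 1/2$, the derivative estimate gives $\E Y_n(k_n^\ast + 1) = O(\log n / n)$, and Markov's inequality yields $\pr(K_n(\gt) \geq k_n^\ast + 1) \leq \E Y_n(k_n^\ast + 1) \to 0$.

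\emph{Step 2 (lower bound).} Since $k_n^\ast - 1 \leq \tilde k_n - 1/2$, the same interpolation gives $\E Y_n(k_n^\ast - 1) = \Omega(n/\log n) \to \infty$, and by Paley--Zygmund it suffices to prove that $\E Y_n(k)^2 / (\E Y_n(k))^2 \to 1$ at $k = k_n^\ast - 1$. Decomposing by intersection type,
\[
\E Y_n(k)^2 = \sum_{s,t=0}^{k} N_k(s,t)\, p_{s,t}(k), \qquad N_k(s,t) = \binom{n}{k}^2 \binom{k}{s}\binom{n-k}{k-s}\binom{k}{t}\binom{n-k}{k-t},
\]
where $p_{s,t}(k) = \pr(\avg(\vW_\gl) \geq \gt,\, \avg(\vW_\gc) \geq \gt)$ is the bivariate normal upper tail at correlation $\gr_{s,t} = st/k^2$. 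The diagonal $(s,t)=(0,0)$ contributes $(\E Y_n(k))^2 (1+o(1))$ via $\binom{n-k}{k}^2/\binom{n}{k}^2 = 1 - O(k^2/n)$.

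The main obstacle is bounding the off-diagonal sum $\sum_{(s,t)\neq(0,0)} N_k(s,t)\, p_{s,t}(k) = o((\E Y_n(k))^2)$. I would combine the bivariate tail bound coming from Lemma~\ref{lem:compare}, namely $p_{s,t}(k) \lesssim \bar\Phi(k\gt)^2 \sqrt{(1+\gr)/(1-\gr)}\exp(k^2\gt^2 \gr/(1+\gr))$, with precise hypergeometric estimates for $N_k(s,t)/N_k(0,0)$ (which concentrate at $s,t \approx k^2/n = O((\log n)^2/n)$). In the bulk regime $st = o(k^2/\log n)$ the inflation factor is $n^{o(1)}$ and the hypergeometric weights sum to $1+o(1)$; large overlaps $s \vee t = \Omega(k)$ are killed by the combinatorial prefactor $\binom{n-k}{k-s}\binom{n-k}{k-t}/\binom{n}{k}^2$; the intermediate regime of moderate $(s,t)$ requires the kind of bookkeeping used to prove Lemma~\ref{lem:varestimate}, but with the sharper relative accuracy forced by the fact that $(\E Y_n(k_n^\ast - 1))^2$ diverges only at rate $(n/\log n)^2$, so the tolerance on the off-diagonal is tight. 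Once this variance bound is in place, Chebyshev's inequality gives $\pr(Y_n(k_n^\ast - 1) = 0) \to 0$, completing the two-point localization. \qed
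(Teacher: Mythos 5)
Your proposal is correct and follows essentially the same route as the paper: Theorem~\ref{thm:loc} is proved there via Proposition~\ref{prop:first-second-mom}, which combines a first-moment (Markov) bound at $k_n^*+1$ with a second-moment (Chebyshev) bound at $k_n^*-1$, the off-diagonal correlation sum being controlled by Lemma~\ref{lem:biv-norm-tail} together with Lemma~\ref{lem:varestimate}\eqref{item:varb} --- precisely the bookkeeping you defer to. Incidentally, your rate $\frac{d}{dx}\log \E Y_n(x)\big|_{x=\tilde k_n} = -2\log n + O(\log\log n)$ is the correct one; the paper's proof asserts $f_n(\tilde k_n+c)=n^{-3c(1+o(1))}$, which appears to be a small algebraic slip (the combinatorial ratio contributes $(n/x)^{2c}$, not $(en/x)^{c}$), though either exponent yields the needed divergence/decay of the first moments.
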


\vskip.2in

%
%

\subsection{Local Optima and Iterative Search procedures}
\label{sec:loc-opt-res}
Finding the globally optimal $k \times k$ submatrix of a given data matrix is 
computationally prohibitive.  In practice, one often resorts to iterative search procedures 
that sequentially update a sequence of candidate submatrices in order to 
increase their average value.   The Large Average Submatrix (LAS) algorithm
(\cite{shabalin2009finding}) is a simple iterative search procedure for large average
submatrices that has proven effective in a number of genomic applications.  The basic
idea of the algorithm is this: if we restrict ourselves to a given set of $k$ columns, the 
optimal $k \times k$ submatrix can be found by computing the sum of each row over these
columns, and then choosing the $k$ rows with largest sum.  An analogous property holds
for a fixed set of $k$ rows.  The algorithm alternates between these two steps, alternately
updating rows and columns, until no further improvement in the average of the candidate
submatrix is possible.  A more detailed description follows.

\vskip.15in

\noindent
\begin{tabbing}
{\bf Output: } \= something \kill
{\bf Input:}	\> An $n \times n$ matrix ${\bf X}$ and integer $1 \leq k \leq n$.\\[.1in]

{\bf Loop:}	\> Select $k$ columns $J$ at random. Iterate until convergence.\\[.1in]

		\> Let $I:=$ $k$ rows with largest sum over columns in $J$.\\[.1in]
		
		\> Let $J :=$ $k$ columns with largest sums over rows in $I$.\\[.1in]
		
{\bf Output:}	\> Submatrix associated with final index sets $I$ and $J$.\\
\end{tabbing}

In practice, the iterative search procedure is applied with many choices of initial columns,
and the output submatrix with the largest average value is reported.
Submatrices to which the algorithm converges are locally optimal in the sense that
they cannot be improved by simple operations such as row or column swaps.  In particular,
their row and column sums dominate those in the strip defined by their column and 
row sets, respectively.  We make these notions more precise in the following definition.

\vskip.1in

\begin{defn}
Fix $1 \leq k \leq n$ and let $\gl = I \times J \in \sS_{n}(k)$.  
The sub-matrix $\vW_{\gl} := ((w_{ij}))_{i \in I, j \in J}$ is \emph{row dominant} in $\vW^n$ if
\[
\min_{i \in I} \Big\{ \mbox{$\sum_{j \in J} w_{ij}$} \Big\} 
\ \geq \ 
\max_{i \in [n] \setminus I} \Big\{ \mbox{$\sum_{j \in J} w_{ij}$} \Big\} 
\]
and is \emph{column dominant} in $\vW^n$ if 
\[
\min_{j \in J} \Big\{ \mbox{$\sum_{i \in I} w_{ij}$} \Big\} 
\ \geq \ 
\max_{j \in [n] \setminus J} \Big\{ \mbox{$\sum_{i \in I} w_{ij}$} \Big\} 
\]
A submatrix that is both row and column dominant in $\vW^n$ will be called \emph{locally optimal} in~$\vW^n$.
\end{defn}
\vskip.1in

It is easy to see that a $k \times k$ submatrix $\vW_{\gl}$ is locally optimal if and only if
it is a fixed point of the LAS search procedure, and the the LAS search procedure always
yields a local maximum.
Local optima represent natural ``extreme points'' of the set of $k \times k$ submatrices.  Understanding their behavior sheds light on the landscape of $k \times k$ submatrices, 
and the structure of the random matrices themselves.
The next result identifies the limiting average and distribution 
of a submatrix conditioned to be locally optimal.  In particular, we find the probability
that a given $k \times k$ submatrix is locally optimal. Before stating the main result, we will need some notation for the ANOVA decomposition of a matrix and define some random variables which arising in describing these distributional limits. 

Given any matrix $\vU = ((u_{ij}))$, we shall let $u_{i.}$ denote the average of row $i$, $u_{.j}$ denote the average of column $j$ and $u_{..}=\avg(\vU)$. Let  $\cA(\vU)$ be the Analysis of variance  (ANOVA) decomposition of the matrix $\vU$ namely 
\begin{equation}
\label{eqn:anova-decomp}
	\cA(\vU)_{ij} = u_{ij} - u_{i.}-u_{.j}+u_{..}, 1\leq i,j\leq k.
\end{equation}
Write this as 
\begin{equation}
\label{eqn:anova-r-c-av}
	\vU:= \avg(\vU)\vone\vone' + \bar{\vR}(\vU) + \bar{\vC}(\vU) + \cA(\vU),
\end{equation}
where $\bar{\vR}(\vU)$ denotes the matrix whose $i$-th row entires are all equal to $u_{i.} - \avg(\vU) $ for all $1\leq i\leq k$ and similarly $\bar{\vC}(\vU)$ denotes the matrix whose $i$-th column entries all correspond to $u_{.i} - \avg(\vU) $ while $\cA$ denotes the ANOVA operation on the entries of the matrix $\vU$ given in~\eqref{eqn:anova-decomp}.

For the statement of the result we will need the following random variables. 
\begin{enumeratei}
\item\label{iti}
Let $(G,T,T')$ be non-negative random variables with joint density 
\[
f(g,t,t') \propto ( \log(1+t/g) \log(1+t'/g))^{k-1} g^{k-1}e^{-t-t'-2g}.
\]
\item\label{itii}
$\vU=(U_{1}, \ldots,U_{k})$ and $\vV=(V_{1},\ldots,V_{k})$ are independent Dirichlet$(1,\ldots,1)$ 
random vectors, independent of $G, T, T^{\prime}$.
\end{enumeratei}

Let us now state the result. 
\begin{thm}[Structure Theorem for Locally Optimal Submatrices]
\label{thm:st1} 
Let $\cI_{k,n}$ be the event that $\vW^k$ is locally optimal in $\vW^n$.

\begin{enumeratea} 

\vskip.1in

\item\label{item:structurea}  
For fixed $k \geq 1$ 
\[
\pr(\cI_{k,n})= \frac{\theta_k}{{n\choose k}(\log n)^{(k-1)/2}} (1+o(1)) \text{ as } n\to\infty.
\]
Here
\begin{align}\label{eq:thetakval}
\theta_{k}:=\frac{k^{2k+1/2}}{2^{2k-1}\pi^{(k-1)/2} k!^2} \E( (\log(1+Y/G') \log(1+Y'/G') )^{k-1} )
\end{align}
\vskip.06in
\noindent
where $G',Y,Y'$ are independent, $G' \sim \mbox{Gamma}(k,2)$, and $Y,Y' \sim \mbox{Exp}(1)$.

\vskip.1in

\item\label{item:structureb}  
Let $a_n$ and $b_n$ be the scaling and centering constants  given in \eqref{eqn:an-def} and \eqref{eqn:bn-def}. Consider the ANOVA decomposition of the matrix $\vW^k$ as in \eqref{eqn:anova-r-c-av}. Then conditional on the event $\cI_{k,n}$ we have as $n\to\infty$,
\begin{align*}
\bigl.	\bigl(a_n(\sqrt{k}&\avg(\vW^k) - b_n), \sqrt{k} a_n \bar{\vR}(\vW^k), \sqrt{k} a_n \bar{\vC}(\vW^k),\cA(\vW^k) \bigr) \mid \cI_{k,n} \probd \\
&\left(-\log{G},~ \log(1+T/G) \begin{bmatrix} kU_1-1 \\\vdots\\ kU_k-1 \end{bmatrix} \vone',~ \log(1+T'/G) \, \vone \begin{bmatrix} kV_1-1 \\\vdots\\ kV_k-1\end{bmatrix}',~ \cA(\vW^k)   \right)
\end{align*}
where $G,T,T',\vU,\vV$ are as given in \eqref{iti} and \eqref{itii}.
\end{enumeratea}
\end{thm}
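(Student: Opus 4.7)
The plan is to exploit the independence structure of the ANOVA decomposition $\vW^k = \avg(\vW^k)\vone\vone' + \bar\vR(\vW^k) + \bar\vC(\vW^k) + \cA(\vW^k)$. Since $\vW^k$ has iid Gaussian entries, the four orthogonal projections are mutually independent. The ``strip'' row sums $R_i' = \sum_{j\leq k}w_{ij}$ for $i>k$ and column sums $C_j'=\sum_{i\leq k}w_{ij}$ for $j>k$ involve disjoint entries of $\vW$, so conditionally on $\vW^k$ the row-dominance and column-dominance events are independent, with respective probabilities $\Phi(R_{(1)}/\sqrt{k})^{n-k}$ and $\Phi(C_{(1)}/\sqrt{k})^{n-k}$, where $R_{(1)}=\min_{i\leq k}R_i$ and $C_{(1)}=\min_{j\leq k}C_j$. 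The residual $\cA(\vW^k)$ does not enter these expressions at all, so it retains its unconditional Gaussian law after conditioning on $\cI_{k,n}$, which matches the appearance of $\cA(\vW^k)$ on the right-hand side of part (b).

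Next I would zoom in on the rare-event regime via the rescaling $g = \exp(a_n(b_n-\sqrt{k}\avg(\vW^k)))$, $s_i = \sqrt{k}a_n\bar r_i$, $s_j' = \sqrt{k}a_n\bar c_j$ (so $\sum s_i = \sum s_j' = 0$). The Gaussian density of $\avg(\vW^k)$ with its Jacobian produces a limiting density of $g$ proportional to $g^{k-1}$ with prefactor $\sqrt{k}(4\pi\log n)^{(k-1)/2}/n^k$, while the densities of $\bar\vR,\bar\vC$ flatten (their quadratic exponents rescale to $O(1/\log n)$) to the constant $\sqrt{k}/(4\pi\log n)^{(k-1)/2}$ on their respective hyperplanes. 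The tail estimate $n\bar\Phi(b_n+x/a_n)\to e^{-x}$ gives $\Phi(R_{(1)}/\sqrt{k})^{n-k}\to\exp(-g e^{-s_{(1)}})$ and similarly for columns. Symmetrizing the $\bar\vR$-integral over which coordinate of $\vs$ is smallest and parameterizing the simplex of deviations reduces it to
\[
I(g):=\int_0^\infty \exp\!\bigl(-g e^{v/k}\bigr)\,\frac{v^{k-2}}{(k-2)!}\,dv.
\]
An integration by parts using $d[(\log(1+t/g))^{k-1}]/dt=(k-1)(\log(1+t/g))^{k-2}/(g+t)$ rewrites $I(g)= [k^{k-1}e^{-g}/(k-1)!]\int_0^\infty e^{-t}(\log(1+t/g))^{k-1}dt$, and substituting into
\[
\pr(\cI_{k,n})\sim \frac{k^{3/2}}{n^k(4\pi\log n)^{(k-1)/2}}\int_0^\infty g^{k-1}I(g)^2\,dg,
\]
then using the $\Gamma(k,2)\otimes\mathrm{Exp}(1)^{\otimes 2}$ representation of the resulting triple integral and $n^k\sim k!\binom{n}{k}$, yields part (a) with $\theta_k$ as in~\eqref{eq:thetakval}.

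The same computation, read before normalization, gives the joint conditional density of the rescaled tuple on the right-hand side as
\[
\propto\; g^{k-1}\exp\!\bigl(-g(e^{-s_{(1)}}+e^{-s'_{(1)}})\bigr)\,\ind_{\sum s_i=0}\,\ind_{\sum s'_j=0}\cdot f_{\cA}(\cA).
\]
The $\cA$ factor separates, and the $\vs$ and $\vs'$ pieces are conditionally independent given $g$ by symmetry. To match the theorem's representation $\vs=\log(1+T/G)(k\vU-\vone)$ with $\vU\sim\mathrm{Dir}(1,\ldots,1)$ independent of $(G,T)$, I would verify that integrating the auxiliary scale $L:=\log(1+T/G)$ over $[m,\infty)$ with $m=-s_{(1)}$ against its conditional density $\propto gL^{k-1}e^{-ge^L+L}$ (read off the joint law of $(G,T)$) recovers the marginal density $\propto e^{-ge^{-s_{(1)}}}$ established above; a short computation using $\int_m^\infty e^{-ge^L+L}dL=g^{-1}e^{-ge^m}$ confirms this, and the same identification works for $(\vs',T',\vV)$. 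Joint distributional convergence then follows from convergence of the rescaled densities combined with dominated convergence using the exponential decay of the integrand in $g$ and the tail bounds on $s_{(1)}, s'_{(1)}$.

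The principal obstacle is the auxiliary-variable representation in part (b): the map $(L,\vU)\mapsto L(k\vU-\vone)$ is many-to-one (for each $\vs$ with minimum $-m$, the preimage is a one-parameter family indexed by $L\ge m$), so the Dirichlet law of $\vU$ cannot be extracted by a naive change of variables on $\vs$. The integration-by-parts identity that elevates the exponent from $k-2$ to $k-1$ is what makes the $(T,\vU)$ decomposition consistent and is essential to the specific form of both $\theta_k$ and the density of $(G,T,T')$. A secondary technical hurdle is uniform integrability: one must dominate the finite-$n$ integrand by an envelope that is integrable in $(g,\vs,\vs')$ to pass to the Laplace-method limit, which is handled by standard Gaussian tail bounds together with the monotone majorant $\Phi(x)^{n-k}\leq \exp(-(n-k)\bar\Phi(x))$.
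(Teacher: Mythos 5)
Your proposal is correct, and I checked that the constants close up: the prefactors $\sqrt{k}(4\pi\log n)^{(k-1)/2}/n^k$ for $g$ and $\sqrt{k}/(4\pi\log n)^{(k-1)/2}$ for each of $\vs,\vs'$, together with the simplex volume $k^{k-1}m^{k-1}/(k-1)!$ and your substitution $e^m=1+t/g$, reproduce \eqref{eq:thetakval} exactly. The strategy is the same as the paper's (ANOVA independence plus an asymptotic density computation in $(g,\vs,\vs',\cA)$), but the packaging differs in two respects. First, you integrate out the external strips immediately via $\Phi(\cdot)^{n-k}$, whereas the paper keeps the external maxima $M_{n-k},M'_{n-k}$ as random variables, proves the joint conditional limit $(R_n,M_{n-k},M'_{n-k})\Rightarrow(-\log G,-\log(G+Y),-\log(G+Y'))$, and then extracts the factor $(\log(1+Y/G))^{k-1}$ from the small-ball asymptotics $F_k(x)=\alpha_k x^{k-1}(1+o(1))$ of Lemma~\ref{lem:tail} evaluated at the limiting gap $a_n^{-1}\log(1+Y/G)$; your change of variables in $\int_0^\infty e^{-ge^m}m^{k-2}\,dm$ is precisely the inverse of that step. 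Second, for part (b) the paper's finite-$n$ change of variables $\vU^{(n)}=\vV^{(n)}/(X_n-Y_n)$ plays the role of your auxiliary-variable identification, and your check that marginalizing $L$ over $[m,\infty)$ recovers $e^{-ge^{m}}$ is the right consistency verification. The dominated-convergence points you flag are real but routine; the paper handles them via compactly supported test functions and the tightness in \eqref{eqn:avg-mn-mn-joint}. (For $k=1$ your $I(g)$ degenerates, but that case is trivial and consistent with $\theta_1=1/2$.)
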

%

\noindent
{\bf Remark:} In words, the above Theorem implies that conditional on the event $\cI_{k,n}$ that $\vW^k$ is locally optimal, as $n\to\infty$ we have 
\begin{align*}
\vW^{k} \mid \cI_{k,n}\equald 
& \left( \frac{b_n}{\sqrt{k}} - \frac{\log G}{\sqrt{k}a_n}\right)\vone\vone' + \cA(\vW^{k})  \\
&+ \frac{\log(1+T/G)}{\sqrt{k}{a_n}} \begin{bmatrix} kU_1-1 \\ kU_2-1\\\vdots\\ kU_k-1\end{bmatrix}
 \vone' + \frac{\log(1+T'/G)}{\sqrt{k}{a_n}} \vone  \begin{bmatrix} kU'_1-1 \\ kU'_2-1\\\vdots\\ kU'_k-1\end{bmatrix}'+ o_{p}(1/a_n)
\end{align*}
As a simple corollary of the theorem, we see that all the entries in a typical locally optimal submatrix are 
concentrated around $\sqrt{2\log n/k}(1+o(1))$. However we will in fact crucially need the limiting structure of the re-centered row and column averages in order to establish the central limit theorem for the number of locally optimal submatrices below.

Note that local optimality is invariant under row and column permutations, and
therefore part \ref{item:structurea} of Theorem \ref{thm:st1} gives the probability 
that {\em any} fixed $k \times k$
submatrix is locally optimal in $\vW^n$: the focus on $\vW^k$ is a matter
of notational convenience.  Clearly
\begin{align*}
\pr({\mathcal I}_{k,n}) = 
\pr( \vW^k \mbox{ is row dominant}) \, \pr( \vW^k \mbox{ is column dominant} 
\mid \vW^k \mbox{ is row dominant} ).
\end{align*}
It is easy to see, by symmetry, that $\pr( \vW^k \mbox{ is row dominant}) = {n \choose k}^{-1}$. A priori, one might imagine since conditioning on the matrix $\vW^k$ being row dominant makes the entries of this matrix ``large'', that  
\[
\pr( \vW^k \mbox{ is column dominant} 
\mid \vW^k \mbox{ is row dominant} )\ \to\ c_k
\]
  for some constant  $c_k > 0$ as $n\to\infty$. 
However our argument shows that in fact
\[
\pr( \vW^k \mbox{ is column dominant} 
 \mid  \vW^k \mbox{ is row dominant} ) 
\ = \ 
\frac{\theta_k}{(\log n)^{(k-1)/2}} (1+o(1))
\]
The fact that this conditional probability tends to zero, rather than a positive constant, is 
somewhat unexpected. 

\subsection{The Number of Local Optima}

As a first step in understanding the overall landscape of $k \times k$ locally optimal submatrices, 
it is natural to consider the number of locally optimal submatrices in $\vW^n$.  

\begin{defn}
For $n \geq 1$ and $1 \leq k \leq n$ let
\begin{equation}\label{def1}
L_{n}(k):=\sum_{\gl\in\sS_{n}(k)} \ind\{\vW_{\gl} \text{ locally optimal in } \vW^n \},
\end{equation}
be the number of $k \times k$ locally optimal submatrices of $\vW^n$. 
\end{defn}

\vskip.15in

By symmetry, the probability that a given $k \times k$ submatrix of $\vW^n$ is locally optimal
is equal to $\pr(\cI_{k,n})$, and therefore $\E(L_n(k)) = {n \choose k}^2 \pr(\cI_{k,n})$. 
Thus Part~\eqref{item:structurea} of Theorem~\ref{thm:st1} immediately yields the following result.

\begin{thm}[Mean behavior]\label{thm:mean}
For each fixed $k \geq 1$, 
\vskip-.03in
\[
\E(L_{n}(k)) = \frac{\theta_k {n\choose k}}{ (\log n)^{(k-1)/2}} (1+o(1))
\] 
\vskip.08in
\noindent
as $n$ tends to infinity, where $\theta_k > 0$ is as in~\eqref{eq:thetakval}. 
\end{thm}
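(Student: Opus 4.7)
The plan is to derive this as a direct computation from the linearity of expectation, combined with part (a) of the Structure Theorem for Locally Optimal Submatrices already established above. The theorem is essentially a corollary, so the main task is really to verify that the symmetry argument sketched in the preamble is correctly applied and that the asymptotic constants line up.

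First, I would write
\[
\E(L_n(k)) = \sum_{\lambda \in \sS_n(k)} \pr(\vW_\lambda \text{ is locally optimal in } \vW^n)
\]
by linearity of expectation applied to the defining sum in \eqref{def1}. Next, observe that the joint law of the entries of $\vW^n$ is invariant under independent permutations of its rows and columns, and the property of $\vW_\lambda$ being locally optimal in $\vW^n$ is preserved under such permutations (row dominance and column dominance are symmetric conditions). Hence $\pr(\vW_\lambda \text{ is locally optimal})$ does not depend on the particular $\lambda \in \sS_n(k)$, and in particular equals $\pr(\cI_{k,n})$, the probability that the specific submatrix $\vW^k$ is locally optimal in $\vW^n$.

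Since $|\sS_n(k)| = \binom{n}{k}^2$, this gives the exact identity
\[
\E(L_n(k)) = \binom{n}{k}^2 \pr(\cI_{k,n}).
\]
Applying part (a) of the Structure Theorem for Locally Optimal Submatrices, which yields
\[
\pr(\cI_{k,n}) = \frac{\theta_k}{\binom{n}{k}(\log n)^{(k-1)/2}}(1+o(1)) \quad \text{as } n \to \infty,
\]
and plugging this into the previous display, one $\binom{n}{k}$ factor cancels and the claimed asymptotic
\[
\E(L_n(k)) = \frac{\theta_k \binom{n}{k}}{(\log n)^{(k-1)/2}}(1+o(1))
\]
follows immediately. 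There is no real obstacle in this argument: all the work is absorbed into the estimate of $\pr(\cI_{k,n})$ already given. The only minor points to be careful about are (i) confirming the symmetry step explicitly, since conditioning on local optimality ties rows and columns together in a nontrivial way, and (ii) ensuring that the $o(1)$ error from the Structure Theorem multiplies cleanly against the remaining $\binom{n}{k}$ factor, which it does since both terms are deterministic.
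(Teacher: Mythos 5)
Your proposal is correct and is exactly the paper's argument: the paper derives Theorem~\ref{thm:mean} as an immediate corollary of Theorem~\ref{thm:st1}\eqref{item:structurea} via linearity of expectation and the symmetry identity $\E(L_n(k)) = \binom{n}{k}^2 \pr(\cI_{k,n})$. Nothing further is needed.
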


\vskip.1in

Intuitively this suggests that the running time of the {\bf LAS} algorithm can be bounded by a Geometric random variable with $p=p(n) = \theta_k/(\log{n})^{(k-1)/2}$, and thus converges in $\Theta_P((\log{n})^{(k-1)/2})$ steps, and thus gives conceptual insight on empirical observations on the running time of the algorithm.  Proving this at a rigorous level seems to be beyond the scope of the techniques in this paper.

The variance behavior of $L_{n}(k)$ is more delicate.  
In particular, assessing the variance of $L_n(k)$ 
requires a careful and detailed analysis of the joint probability that two given submatrices are locally 
optimal.  We do this by considering a series of cases, depending on the number of rows and
columns that the two submatrices have in common.  It is worth noting that the dominant term 
in the variance arises from submatrices having no common rows and columns: even in this case,
the local optimality of one submatrix will influence that of the other.

\vskip.15in

\begin{thm}[Variance behavior]\label{thm:var}
For each fixed $k\geq 1$, there exists $\nu_{k}\in (0,\infty)$ such that
\[
\var(L_{n}(k))= 
\begin{cases}
\nu_{1}n (1+o(1)) &\text{ for } k=1\\[.1in]
\nu_{k} n^{2k^2/(k+1)}(\log n)^{-k^2/(k+1)} (1+o(1)) & \text{ for } k\geq 2
\end{cases}
\] 
as $n$ tends to infinity.
\end{thm}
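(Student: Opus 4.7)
The plan is to expand
\[
\var(L_n(k)) = \sum_{\lambda,\gamma\in\sS_n(k)} \cov\bigl(\ind\{\vW_\lambda\ \text{LO}\},\ind\{\vW_\gamma\ \text{LO}\}\bigr) = \sum_{s,t=0}^{k} N_{s,t}\,C_{s,t}(n),
\]
grouping ordered pairs by their overlap $|\lambda\cap\gamma|=(s,t)$.  Here $N_{s,t}=\binom{n}{k}^{2}\binom{k}{s}\binom{n-k}{k-s}\binom{k}{t}\binom{n-k}{k-t}=\Theta(n^{4k-2s-2t})$ and $C_{s,t}(n)$ denotes the covariance of local optimality for a canonical representative pair.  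The problem reduces to obtaining asymptotics of $N_{s,t}C_{s,t}(n)$ class by class and summing.

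The diagonal $s=t=k$ contributes $\E L_n(k)(1-\pr(\cI_{k,n}))=\Theta(n^k/(\log n)^{(k-1)/2})$ by Theorem~\ref{thm:mean}, which is already the correct order when $k=1$.  In that case the remaining off-diagonal work is elementary, since $\vW_\lambda$ is locally optimal iff its single entry is the maximum of the $2n-1$ entries in its row and column.  Pairs sharing a row or column (of which there are $\Theta(n^3)$) contribute negative covariance of order $n^{-2}$, while the $\Theta(n^4)$ fully disjoint pairs have positive covariance of order $n^{-3}$; adding these yields $\nu_1 n(1+o(1))$ via a short exchangeability calculation, with $\nu_1$ computable in closed form.

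For $k\geq 2$, $n^k$ is of strictly smaller order than $n^{2k^2/(k+1)}$, so the diagonal is subdominant and the leading term must come from a non-trivial overlap class.  I would extend the conditioning scheme of Theorem~\ref{thm:st1}(\ref{item:structurea}) to pairs: condition on the joint vector of row and column sums of $\vW_\lambda$ and $\vW_\gamma$; given these, the four dominance events reduce to maxima of Gaussian vectors that are conditionally independent across the two submatrices once the shared entries are accounted for, and can be evaluated via the tail estimates of Section~\ref{sec:est}.  The joint probability $p_{s,t}(n):=\pr(\vW_\lambda,\vW_\gamma\ \text{both LO})$ thus becomes a Laplace-type integral whose form depends on $(s,t)$ through the induced covariance structure; balancing $N_{s,t}C_{s,t}(n)$ across $(s,t)$ identifies the dominant class as the one that trades the combinatorial factor $n^{4k-2s-2t}$ against the probabilistic cost of forcing both submatrices to be LO, and the fractional exponent $2k^2/(k+1)$ emerges as the value at this saddle.

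The principal obstacle is extracting $C_{s,t}(n)$ with sufficient precision.  For the small-overlap classes, $C_{s,t}(n)=p_{s,t}(n)-\pr(\cI_{k,n})^{2}$ is the difference of two quantities of order $n^{-2k}(\log n)^{-(k-1)}$ whose relative gap is only $n^{-2k/(k+1)}(\log n)^{-1/(k+1)}$; capturing this correction requires higher-order expansion of the Gaussian density and tail integrals near the saddle, together with careful tracking of cancellations between positive and negative contributions.  One must also verify that every other overlap class is strictly subleading and that the constant $\nu_k$ assembled from the dominant contribution is strictly positive---that is, rule out degenerate cancellations in the saddle-point evaluation---and this non-degeneracy check is where much of the technical work resides.
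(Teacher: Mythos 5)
Your decomposition by overlap class, your treatment of $k=1$, and your identification of the correct orders of magnitude (in particular that the per-pair covariance for weakly overlapping pairs is a relative correction of size $n^{-2k/(k+1)}$ on top of $\pr(\cI_{k,n})^2$) all match the paper's proof, which uses exactly this $\sum_{s,t} v_n(s,t)$ decomposition, an upper bound for the partially overlapping classes via a constrained Gaussian minimization (Lemma~\ref{lem:st2}), and a separate exact analysis of the diagonal and the same-row-set/same-column-set classes. One small slip: with your convention $|\lambda\cap\gamma|=(s,t)$ the class size is $\Theta(n^{4k-s-t})$, not $\Theta(n^{4k-2s-2t})$; and the phrase ``the leading term must come from a non-trivial overlap class'' is misleading, since the dominant class is in fact the fully \emph{disjoint} pairs (no shared rows or columns), which is the point the paper flags as unexpected.

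The genuine gap is in how you propose to extract the covariance for that dominant disjoint class. You plan to compute $p_{0,0}(n)$ and $\pr(\cI_{k,n})^2$ separately and capture their difference by ``higher-order expansion of the Gaussian density and tail integrals near the saddle.'' That requires matching asymptotic expansions of two quantities of order $n^{-2k}(\log n)^{-(k-1)}$ to relative accuracy $o(n^{-2k/(k+1)})$, i.e.\ you must prove the leading constants cancel exactly and then identify a correction term whose analytic origin (the joint event that a cross-block $\vW_{I_\lambda\times J_\gamma}$ simultaneously has a large column average relative to $\lambda$ and a large row average relative to $\gamma$) is not visible from a naive expansion; you have not identified this mechanism, and the nondegeneracy of $\nu_k$ cannot be checked without it. The paper avoids the subtraction entirely: conditioning on local optimality of each block within disjoint sub-environments, it writes $p_n^2$ as the \emph{same} expectation as $\pr(\cI_n\cap\cI_n')$ but with the cross-block maxima $\max c_{i\cdot},\max c'_{\cdot j}$ replaced by independent copies with identical marginals; the identity $1-P(A)P(B)=P(A^c)+P(A)P(B^c)$ then converts the covariance into $(2+o(1))\,p_{n-k}^2\,\pr(\max c_{\cdot j}\ge\min w^*_{\cdot j},\ \max c_{i\cdot}\ge\min w^{**}_{i\cdot})$, a single joint tail probability evaluated exactly by Lemma~\ref{lem:bigmax} (giving the $n^{-2k/(k+1)}$ factor and a manifestly positive constant). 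Without this coupling, or an equally explicit substitute, your plan stalls precisely at the step that determines the exponent $2k^2/(k+1)$ and the positivity of $\nu_k$.
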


\vskip.1in

Using the results above, we may establish a connection between the average value of 
a typical local optima and the average of the global optimum.  For $c \in \bR$ let
\[
L_{n}(k:c) := \sum_{\gl \in\sS_{n}(k)} 
\ind\{\vW_{\gl} \text{ locally optimal in } \vW^n \ \text{and} \ \avg(\vW_{\gl}) \geq c \},
\]
be the number of locally optimal submatrices with average at least $c$.

\begin{cor}
\label{cor:glob-loc}
If $c_n$ is any sequence of positive numbers such that $c_n a_n \to \infty$, then
for each fixed $k \geq 1$,
\[
\frac{L_{n}(k: k^{-1/2} b_n - c_n) }{L_n(k)} \probc 1
\]
as $n$ tends to infinity.
\end{cor}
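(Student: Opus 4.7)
\medskip

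\noindent\textbf{Proof proposal for Corollary \ref{cor:glob-loc}.}
The plan is to show that the ratio converges to $1$ by writing
\[
1 - \frac{L_n(k : k^{-1/2} b_n - c_n)}{L_n(k)} \;=\; \frac{L_n(k) - L_n(k : k^{-1/2} b_n - c_n)}{L_n(k)} \;=\; \frac{R_n}{L_n(k)},
\]
where $R_n$ counts those $\gl \in \sS_n(k)$ for which $\vW_\gl$ is locally optimal in $\vW^n$ but satisfies $\avg(\vW_\gl) < k^{-1/2} b_n - c_n$. It then suffices to prove (i) $R_n / \E L_n(k) \probc 0$ and (ii) $L_n(k) / \E L_n(k) \probc 1$, since together these force $R_n / L_n(k) \probc 0$.

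For (i), by symmetry and Markov's inequality it is enough to bound $\E R_n$. Writing
\[
\E R_n \;=\; \binom{n}{k}^2 \pr\!\bigl(\cI_{k,n}\bigr)\,\pr\!\bigl(\avg(\vW^k) < k^{-1/2}b_n - c_n \,\big|\, \cI_{k,n}\bigr),
\]
the first two factors equal $\E L_n(k)(1+o(1))$ by Theorem \ref{thm:mean}. For the conditional probability, the event in question is
\[
\bigl\{\,a_n\bigl(\sqrt{k}\,\avg(\vW^k) - b_n\bigr) < -\sqrt{k}\,c_n a_n\,\bigr\},
\]
and by Theorem \ref{thm:st1}\eqref{item:structureb} the random variable $a_n(\sqrt{k}\,\avg(\vW^k) - b_n)$ converges in distribution, conditionally on $\cI_{k,n}$, to the almost-surely finite random variable $-\log G$. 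Since $c_n a_n \to \infty$ by hypothesis, the threshold $-\sqrt{k}\,c_n a_n$ tends to $-\infty$, so the conditional probability tends to $0$. Hence $\E R_n = o(\E L_n(k))$, and (i) follows from Markov's inequality.

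For (ii), a direct Chebyshev estimate gives
\[
\pr\!\Bigl(\bigl|L_n(k) - \E L_n(k)\bigr| > \tfrac{1}{2}\E L_n(k)\Bigr) \;\leq\; \frac{4\,\var(L_n(k))}{(\E L_n(k))^2}.
\]
Using Theorems \ref{thm:mean} and \ref{thm:var}, a short calculation yields, for $k \geq 2$,
\[
\frac{\var(L_n(k))}{(\E L_n(k))^2} \;=\; \Theta\!\left( n^{-2k/(k+1)} (\log n)^{-1/(k+1)}\right) \;\to\; 0,
\]
and for $k=1$ the same ratio is $\Theta(1/n) \to 0$. Consequently $L_n(k)/\E L_n(k) \probc 1$. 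Combining (i) and (ii) finishes the proof. The main conceptual point here is that all of the work has been done already: Theorem \ref{thm:st1}\eqref{item:structureb} pins down the typical fluctuation of the mean of a local optimum at scale $1/a_n$, while Theorem \ref{thm:var} controls the size of $L_n(k)$ relative to its expectation; no additional estimate is required and no real obstacle arises.
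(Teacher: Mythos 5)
Your proof is correct and uses essentially the same ingredients as the paper's: the conditional fluctuation result of Theorem \ref{thm:st1}\eqref{item:structureb} to show the conditional probability of falling below the threshold vanishes, and the mean/variance asymptotics of Theorems \ref{thm:mean} and \ref{thm:var} to get concentration of $L_n(k)$ about its mean. The only difference is bookkeeping (you split off the complement count $R_n$ and argue via Markov and Chebyshev, whereas the paper works with $\E(\tilde L_n/L_n)$ directly using boundedness of the ratio), which is immaterial.
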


Our final result is the asymptotic normality of the random variable $L_{n}(k)$ using Stein's method.   Although 
$L_{n}(k)$ can be expressed as a sum of indicator variables,  
standard weak dependence conditions underlying existing applications of Stein's method do
not hold in this case.  The variance of $L_n(k)$ grows rapidly, in particular 
$\var(L_n(k)) / \E L_n(k) \to \infty$ as $n \to \infty$, so that $L_n(k)$ does not exhibit standard
Poisson scaling.  This is a consequence of the fact the local optimality of a $k \times k$ submatrix 
$\vW_\lambda$ affects the local optimality of {\bf every} other $k \times k$ submatrix, regardless of 
whether or not the other submatrix has any rows or columns in common with $\vW_\lambda$.  

\begin{thm}[Central Limit Theorem for $L_{n}(k)$]\label{thm:clt}
For any fixed $k\geq 1$, we have, 
\[
\tilde{L}_{n}(k):=\frac{L_{n}(k)-\E(L_{n}(k))}{\sqrt{\var(L_{n}(k))}}\probd \text{\upshape N}(0,1)
\]
as $n \to \infty$. Moreover  we have
\[
d_{\cW}(\tilde{L}_{n}(k), \text{\upshape N}(0,1))\leq 
\begin{cases}
c_1n^{-1/2} &\text{ for } k=1\\
n^{-\frac{2(k-1)}{(k+1)(k^2+2k-1)}+O(\log\log n/\log n)} &\text{ for } k\geq 2
\end{cases}
\]
where 

\[
d_{\cW}(W, Z) := \sup \set{\left|\E(g(W)) - \E(g(Z))\right|: g(\cdot) \ 1-\text{Lipschitz}} 
\]

\noindent
is the Wasserstein distance between the distribution of random variables $W$ and $Z$. 
\end{thm}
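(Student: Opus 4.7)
The plan is to apply Stein's method for normal approximation to $\tilde{L}_n(k)$, writing $L_n(k) = \sum_{\lambda \in \sS_n(k)} X_\lambda$ with $X_\lambda = \ind\{\vW_\lambda \text{ locally optimal in } \vW^n\}$. As the discussion preceding the theorem makes clear, the standard dependency-graph approach is not available: the local optimality of $\vW_\lambda$ depends on the whole row- and column-strips of $\lambda$, so every pair of indicators is non-trivially correlated. That Poisson scaling must fail is confirmed quantitatively by Theorem~\ref{thm:var}, where the variance exponent $2k^2/(k+1)$ strictly exceeds the mean exponent $k$.

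First, I would construct a size-biased coupling. Because all indicators $X_\lambda$ share a common success probability $p_{k,n} := \pr(\cI_{k,n})$ by symmetry, one obtains a size-biased version $L_n^*$ of $L_n(k)$ by choosing $\lambda_0 \in \sS_n(k)$ uniformly, resampling $\vW^n$ from its conditional law given $X_{\lambda_0}=1$, and setting $L_n^* = \sum_\lambda X_\lambda$ on the resampled matrix. The explicit description of this conditional law via Theorem~\ref{thm:st1}---in particular the joint asymptotic distribution of $\avg(\vW_{\lambda_0})$, the row- and column-means in the $\lambda_0$-strips, and the ANOVA part---provides the concrete handle needed to analyze $L_n^* - L_n$. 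I would then invoke a Goldstein--Rinott-type Wasserstein bound of the form
\[
d_\cW(\tilde{L}_n(k),\, \text{N}(0,1)) \;\leq\; \frac{\E L_n(k)}{\var L_n(k)} \sqrt{\var \E\bigl[L_n^* - L_n \bigm| \vW^n\bigr]} \,+\, \frac{\E L_n(k)}{\var(L_n(k))^{3/2}} \,\E\bigl[(L_n^*-L_n)^2\bigr].
\]

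The second, ``second-moment'' term I would attack by splitting $L_n^* - L_n = \Delta_{\text{near}} + \Delta_{\text{far}}$, where $\Delta_{\text{near}}$ collects changes over $\gamma \in \sS_n(k)$ sharing at least one row or column with $\lambda_0$ (at most $O(n^{2k-1})$ submatrices, each an indicator) and $\Delta_{\text{far}}$ collects the rest. The $\Delta_{\text{far}}$ piece is precisely the long-range contribution responsible for the anomalous variance in Theorem~\ref{thm:var}, and its second moment can be controlled by reusing the disjoint-pair joint-probability estimates developed in Section~\ref{sec:var-asymp}. Optimizing the split against the known orders of $\E L_n(k)$ and $\var L_n(k)$ from Theorems~\ref{thm:mean} and~\ref{thm:var} should yield, after routine but delicate arithmetic, a rate of exponent $-2(k-1)/((k+1)(k^2+2k-1))$; the ungainly denominator $(k+1)(k^2+2k-1)$ is strong evidence that such a balance dictates the final exponent.

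The main obstacle is the first, ``regression'' term $\var \E[L_n^*-L_n \mid \vW^n]$, which is exactly where the pervasive but weak long-range dependence leaks in. Bounding it amounts to showing that conditioning on $X_{\lambda_0} = 1$ shifts the expected count of far-from-$\lambda_0$ local optima by an amount that is approximately linear in $L_n(k) - \E L_n(k)$ with a regression coefficient of the correct order. Establishing this is where I expect the promised ``new variant of Stein's method'' to enter: one must combine the explicit perturbative picture of $\vW_{\lambda_0}$ conditional on local optimality from Theorem~\ref{thm:st1} with a quantitative analysis of how enforcing row- and column-dominance at $\lambda_0$ shifts the ambient order statistics of the $n$ row-sums and $n$ column-sums of the rest of $\vW^n$. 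The case $k=1$ is structurally simpler---$L_n(1)$ counts entries that are simultaneously row and column maxima---and a direct exchangeable-pair or dependency-graph argument should give the $O(n^{-1/2})$ rate without these complications.
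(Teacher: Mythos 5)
Your overall architecture is the right one and in fact coincides with the paper's: the paper's ``direct'' Stein argument is precisely a size-bias--type coupling in which the $2k$ strips of a uniformly chosen $\gl_0$ are replaced by fresh entries (the paper resamples them unconditionally and exploits the independence of $\cI_{\gl_0}(\vW)$ from $L(\vW^{\gl_0})$, which is the mirror image of your ``resample the strips conditionally on $X_{\gl_0}=1$'' coupling), and the resulting bound has exactly your two terms, a conditional-expectation fluctuation term and a second-moment term, each decomposed over the overlap pattern $(s,t)$ of $\gc$ with $\gl_0$. Your identification of the exponent's origin is slightly off --- in the paper the rate $n^{-2(k-1)/((k+1)(k^2+2k-1))}$ is inherited from the largest subdominant overlap term $s=t=k-1$ in the covariance analysis, not from optimizing a near/far split --- but that is cosmetic.

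The genuine gap is in your claim that the second-moment and regression terms ``can be controlled by reusing the disjoint-pair joint-probability estimates developed in Section~\ref{sec:var-asymp}.'' They cannot. For the far term one must show $\var(S_{\gl_0}(k,k)\mid \cI_{\gl_0})\ll \sigma^4/\mu^2 = n^{2k-4+4/(k+1)+o(1)}$, where $S_{\gl_0}(k,k)=\sum_{\gc}(\cI_\gc-\cI_\gc(\vW^{\gl_0}))$ runs over the $O(n^{2k})$ submatrices disjoint from $\gl_0$. If you bound each summand's covariance by the raw covariance $\cov(\cI_\gc,\cI_{\gc'})\asymp n^{-2k-2k/(k+1)}$ from Lemma~\ref{lem:bigmax} and Case 4 of the variance proof, the $O(n^{4k})$ disjoint pairs contribute $\asymp \sigma^2 = n^{2k-2+2/(k+1)+o(1)}$, which exceeds the required bound by a factor of $n^{2-2/(k+1)}$. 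The essential missing ingredient is that one must estimate covariances of the \emph{differences}: the event $\{\cI_\gc\neq \cI_\gc(\vW^{\gl_0})\}$ forces one of the $2k$ resampled strips to contain a row or column sum beating the locally optimal $\gc$, and each such differencing buys an extra factor $n^{-1}$, yielding bounds like $\cov(\cI_\gc-\hat\cI_\gc,\,\cI_{\gc'}-\hat\cI_{\gc'}\mid\cI_{\gl_0})=O(n^{-2k-2-2k/(k+1)})$ for disjoint pairs. These estimates (and the analogous ones for partially overlapping $\gc,\gc'$, organized by their mutual overlap $(\ell,r,c)$) are new; they do not appear in the variance computation and constitute the bulk of the actual proof. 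Relatedly, your regression term is left entirely to ``the promised new variant,'' whereas it too reduces, via Cauchy--Schwarz, to exactly these difference-covariance bounds. As written, the plan would stall at both terms.
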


\vskip.1in

\noindent{\bf Remarks:}
The $k=1$ case of Theorem \ref{thm:clt} follows from existing work on number of local maxima of a random function on a graph (see~\cite[Theorem~3.1]{brs89}). For $k \geq 2$ this result is not applicable due to the dependency among the matrix averages.
We have not attempted to obtain the best rate of convergence in 
Theorem~\ref{thm:clt}: for $k \geq 2$ the given  rate is likely not optimal.  However, simulation results in Figure~\ref{fig:sim} for $k=2$ and 
$n \in \{100,200\}$ with $5000$ runs indicate fast convergence to the Gaussian limit.
\begin{figure}[hbtf]
\includegraphics[height=4cm]{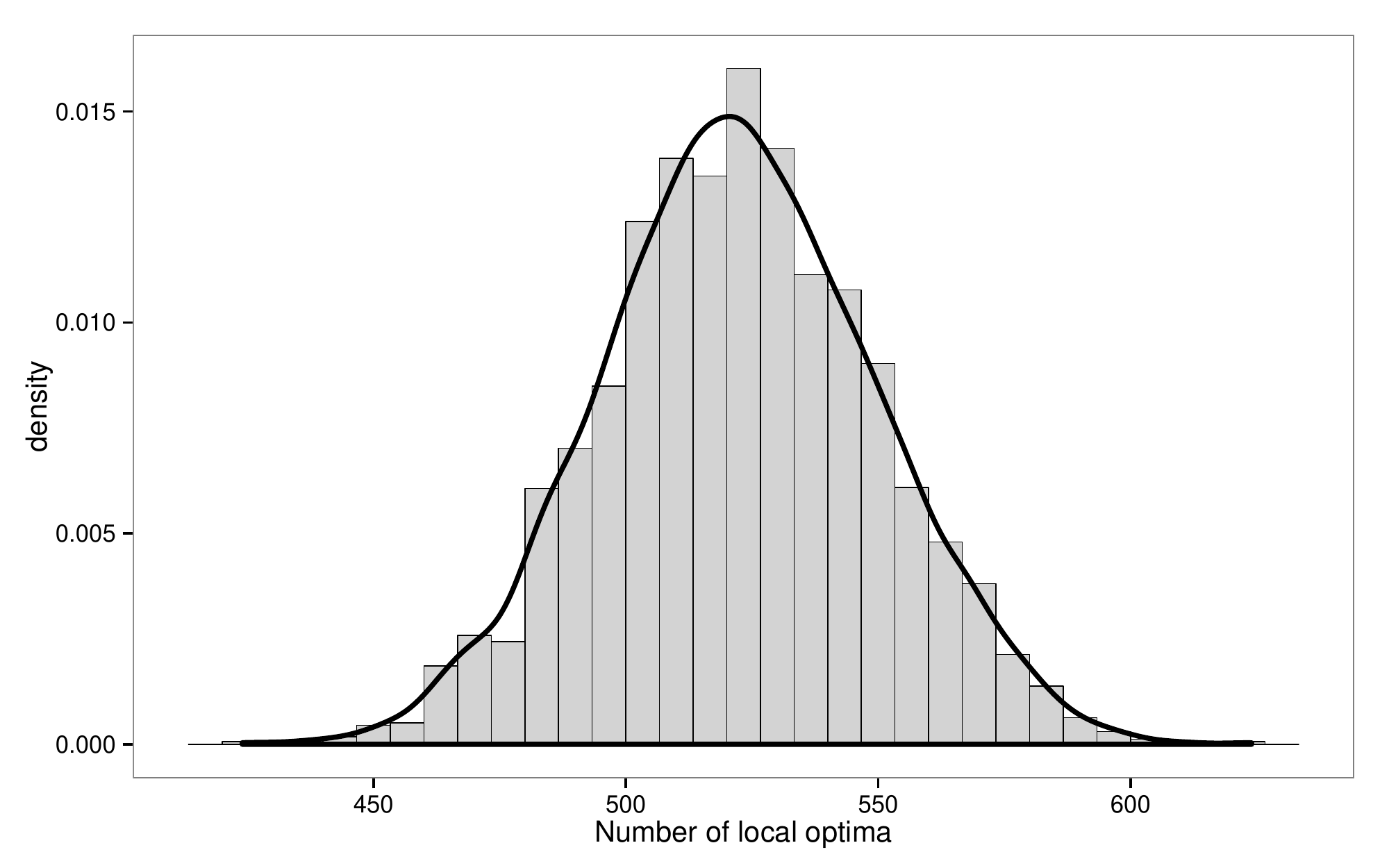}
\includegraphics[height=4cm,width=4cm]{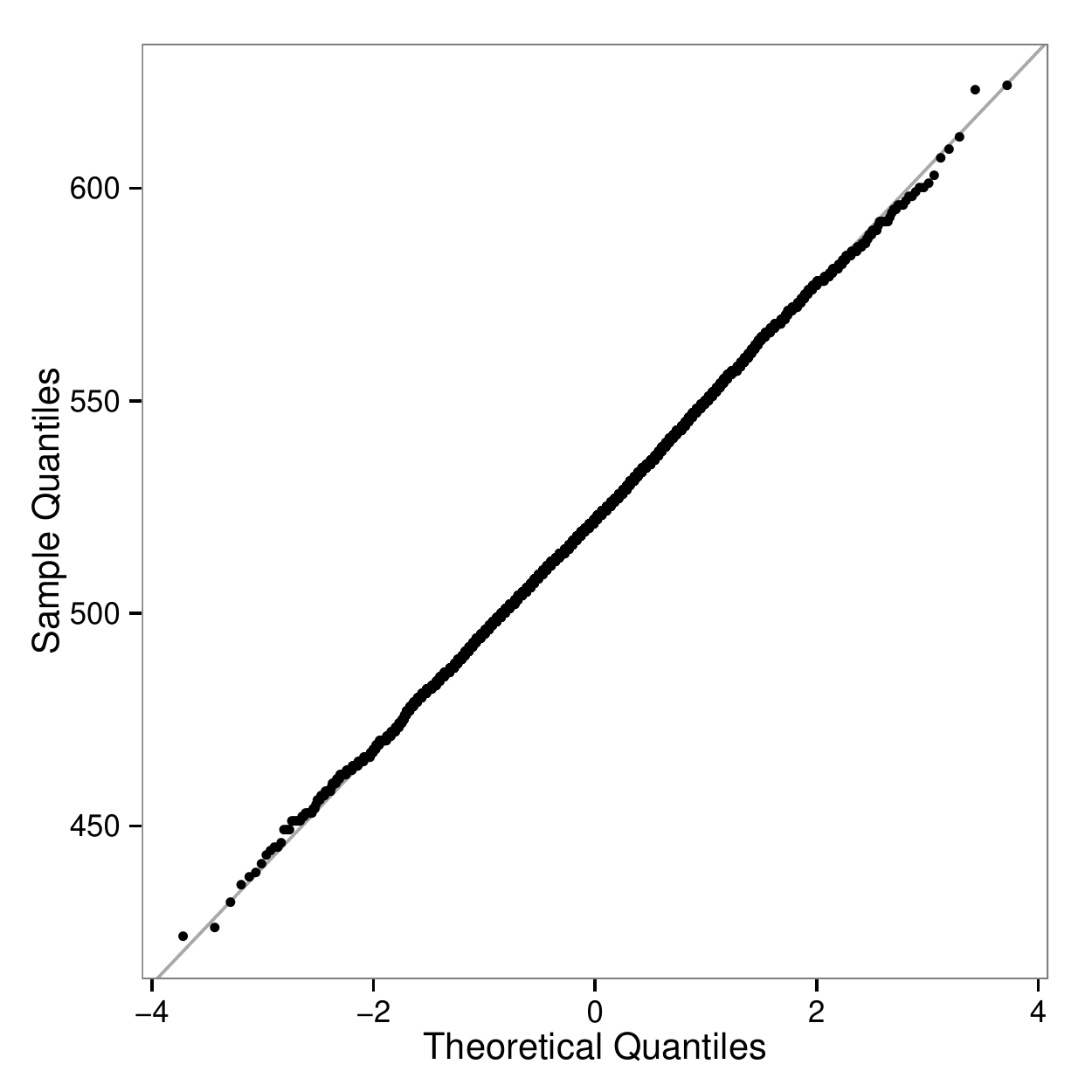}\\
\includegraphics[height=4cm]{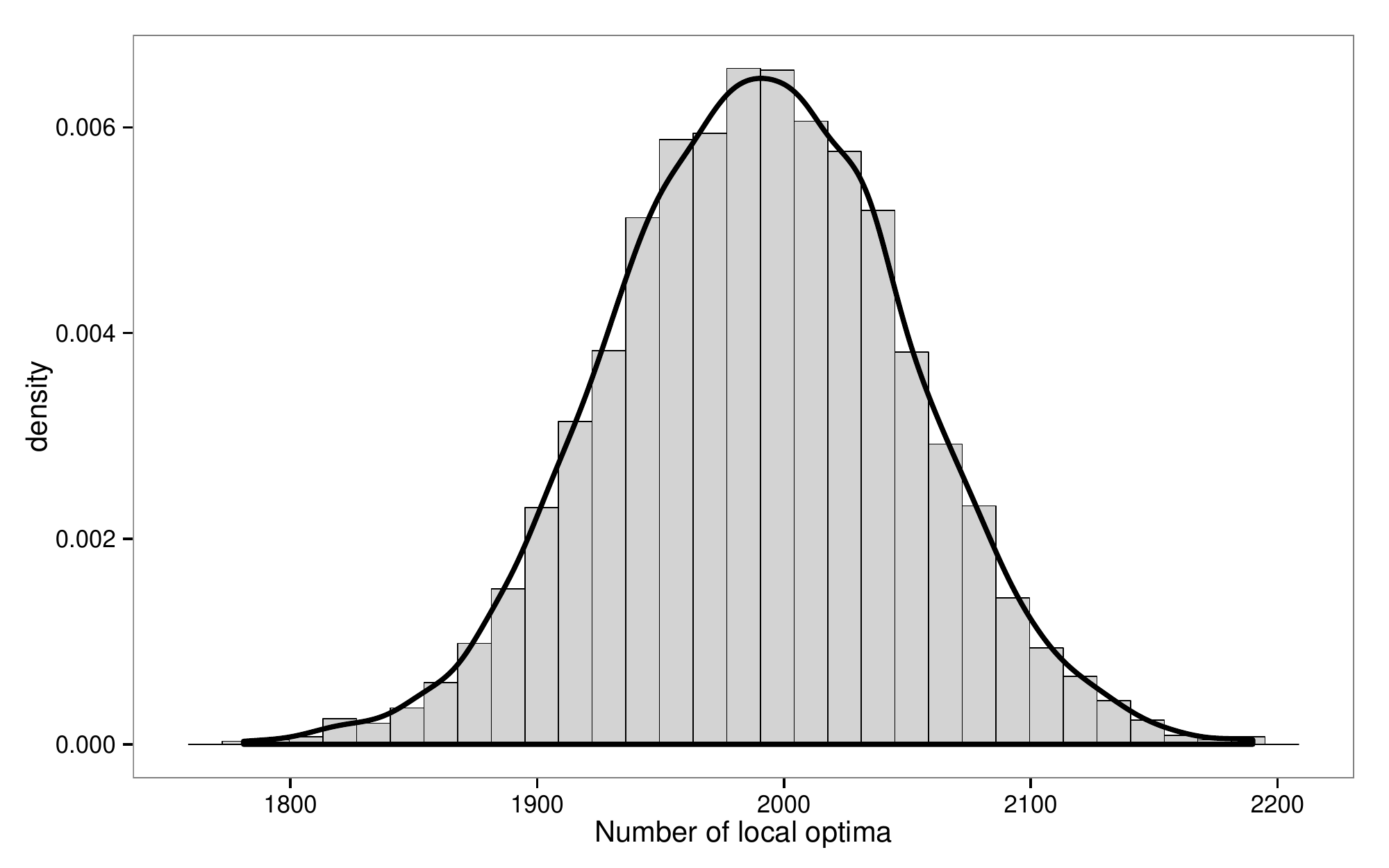}
\includegraphics[height=4cm,width=4cm]{n100_QQPlot.pdf}
\label{fig:sim}
\caption{Histogram and QQPlot for number of local optima for $k=2$ with $n=100$ (top row) and $n=200$ (bottom row) with $5000$ samples.}
\end{figure}


\subsection{Discussion}
\label{sec:open}
We now discuss the relevance of these results and related work. We start with a discussion of the general detection problem considered in this work and then expand on the techniques used in the paper. 

\subsubsection{ Finding large substructures}
As mentioned above, with the advent of large scale data in genomics, problems such as finding interesting structures in matrices has stimulated a lot of interest in a number of different communities, see e.g.~the survey \cite{madeira-survey}. In spirit, such problems are linked to another large body of work in the combinatorics community, namely the hidden clique problem  see e.g.~\cite{pittel} or~\cite{jerrum-clique} and the references therein. The simplest statement of the problem is as follows: Select a graph at random on $n$ vertices; consider the problem of detecting the largest clique (fully connected subgraph). For large $n$, it is known that the largest clique has $k(n) \sim 2\log_{2}{n}$ vertices (\cites{bollobas-clique,boll-book}). Theorem \ref{thm:loc} is very similar, in spirit to this result. However most greedy heuristics and formulated algorithms, short of complete enumeration, are only able to find cliques of size $\sim \log_{2}{n}$ and thus are off by a factor of $2$ from the optimal size. We see analogous behavior in our results; Theorem \ref{thm:globalmax}\eqref{item:maxa} implies that for fixed $k$, the average of the global optimum scales like $\sqrt{4\log{n}/k}$ whilst Theorem \ref{thm:st1} implies that the average of a typical local optima scales like $\sqrt{2\log{n}/k}$. 

\subsubsection{Planted detection problems} In the context of statistical testing of hypothesis, we have analyzed the energy landscape in the ``null'' case. One could also look at the ``alternative'' where there is some inherent structure in the data. In the last few years there has been a lot of interest in formulating statistical tests of hypothesis to distinguish between the null and the alternative,  see e.g.~\cite{candes-castro-durand} and~\cite{castro-zeitouni} for the general framework as well as application areas motivating such questions and see~\cite{berry-lugosi} and \cite{butucea2011detection} for a number of interesting general results in these contexts. In the context of the combinatorics, such questions result in the famous planted clique problem see e.g  \cites{alon-sudakov,dekel2010finding} and the references therein. 

\subsubsection{Energy landscapes} The notion of energy or fitness landscapes, incorporating a fitness or score to each element in a configuration and then exploring the ruggedness of the subsequent landscape,  arose in evolutionary biology, see~\cite{wright1932roles}, and for a nice survey, see~\cite{reidys2002combinatorial}. Our work has been partially inspired by the rigorous analysis of the NK fitness model (\cites{kauffman1989nk,weinberger1991local}) carried out in the probability community in papers such as~\cites{durrett-limic,evans-steinsaltz,limic-pemantle}. These questions have also played a major role in understanding deep underlying structures in spin glass in statistical physics, see e.g.~\cite{spin-glass-parisi}. For general modern accounts of the state of the art on combinatorial optimization in the context of random data and connections to other phenomenon in statistical physics, we refer the interested reader to~\cite{montan-book}. 

\subsection{Stein's method for normal approximations}
\label{sec:stein}
Stein's method~\cite{stein72} is a general and powerful method for proving distributional convergence with explicit rate of convergence. Here we briefly discuss the case of normal approximation. The standard Gaussian distribution can be characterized by the operator $\sA f(x):= xf(x)-f'(x)$ in the sense that, $X$ has standard Gaussian distribution iff $\E(\sA f(X))=0$ for all absolutely continuous functions $f$. Now to measure the closeness between a distribution $\nu$ and the standard Gaussian distribution $\nu_{0}$, one uses a separating class of functions $\cD$ to define a distance
\[
d_{\cD}(\nu,\nu_{0})=\sup_{h\in\cD}|\E h(X) - \E h(Z)|
\]
where $X\sim \nu,Z\sim \text{N}(0,1)$ and then attempts to show that the distance is ``small''. In this paper we will consider the $L^{1}$-Wasserstein distance in which case $\cD$ is the class of all $1$ Lipschitz functions. 

Stein's method consists of two main steps. The first step is to find solution to the equation $\sA f_{h}(x)=h(x)-\E h(Z)$ for $h\in\cD$. Assuming this can be performed, we have,
\[
\sup_{h\in\cD}|\E h(X) - \E h(Z)| \leq \sup_{f\in\cD'}|\E(Xf(X)-f'(X))|
\]
where $\cD'=\{f_{h}\mid h\in\cD\}$. The following lemma summarizes the the bounds required for Stein's method.

\begin{lem}[\cite{stein72}]\label{lem:stein}
For any $1$-Lipschitz function $h$, there is a unique function $f_{h}$ such that $\sA f_{h}=h - \E h(Z)$. Moreover we have
\[
|f_{h}|_{\infty} \leq 1, |f'_{h}|_{\infty}\leq \sqrt{2/\pi} \text{ and } |f''_{h}|_{\infty}\leq 2.
\]
\end{lem}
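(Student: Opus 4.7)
The plan is to explicitly solve the first-order linear ODE
\[
xf(x) - f'(x) = h(x) - \E h(Z) =: \tilde h(x)
\]
and then extract the three sup-norm bounds from the resulting integral representation. Multiplying by the integrating factor $e^{-x^2/2}$ turns the equation into $(e^{-x^2/2}f(x))' = -e^{-x^2/2}\tilde h(x)$, so that integrating, together with the centering $\int_{\mathbb{R}}\tilde h(y)\,e^{-y^2/2}\,dy = 0$, yields the twin representation
\[
f_h(x) = -e^{x^2/2}\int_{-\infty}^x \tilde h(y)\, e^{-y^2/2}\,dy \;=\; e^{x^2/2}\int_x^\infty \tilde h(y)\, e^{-y^2/2}\,dy.
\]
The homogeneous equation $xf - f' = 0$ has solutions $c\,e^{x^2/2}$, so restricting to bounded (or merely subexponentially growing) solutions forces $c = 0$ and gives uniqueness; a posteriori, the bounds below confirm that $f_h$ is indeed bounded.

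For the sup bound on $f_h$, I would exploit the $1$-Lipschitz property together with the centering of $\tilde h$ through the identity $\tilde h(y) = \E[h(y) - h(Z)]$, which immediately gives $|\tilde h(y) - \tilde h(x)| \leq |y - x|$ and $|\tilde h(0)| \leq \E|Z| = \sqrt{2/\pi}$. By reflection symmetry it suffices to consider $x \geq 0$. The direct route is to split $\tilde h(y) = \tilde h(x) + (\tilde h(y) - \tilde h(x))$ under the integral and use the exact computation
\[
\int_x^\infty (y - x)\,e^{-y^2/2}\,dy \;=\; e^{-x^2/2} - x\sqrt{2\pi}\,\bar\Phi(x)
\]
together with the two-sided Mills estimate $\sqrt{2\pi}\,e^{x^2/2}\bar\Phi(x) \leq \min\{\sqrt{\pi/2},\, 1/x\}$. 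A complementary extremal argument uses that at any interior critical point $x^*$ of $f_h$ the Stein equation itself gives $f_h(x^*) = \tilde h(x^*)/x^*$, which, combined with the limiting behavior of $f_h$ at $\pm\infty$, yields $|f_h|_\infty \leq 1$ once the $x$-dependent contributions in the principal and residual parts of the integral formula are shown to cancel exactly.

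For the derivative bounds, I would first note that differentiating the integral representation produces the identity $f_h'(x) = x f_h(x) - \tilde h(x)$, which does not give a uniform bound directly because of the $xf_h(x)$ term. Instead I would assume $h \in C^1$ with $|h'| \leq 1$ (extending to the Lipschitz case by mollification) and integrate by parts in the integral formula for $f_h$ to obtain a kernel representation
\[
f_h(x) = \int_{\mathbb{R}} K(x,y)\, h'(y)\, dy
\]
for an explicit, bounded kernel $K$. Differentiating under the integral sign then reduces the sup bounds on $f_h'$ and $f_h''$ to the exact Gaussian moment identities $\sup_x \int |\partial_x K(x,y)|\,dy = \E|Z| = \sqrt{2/\pi}$ and the corresponding second-order computation returning $2$; sharpness is witnessed by $h(x) = x$ (where $f_h \equiv 1$) and $h(x) = |x|$ (where $f_h'(0) = \sqrt{2/\pi}$). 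The main obstacle will be obtaining these \emph{sharp} constants rather than merely finite bounds: naive triangle-inequality splittings leave residual Mills-ratio contributions of order one, and the cleanest route is the kernel representation combined with exact Gaussian moment evaluations, under which the cancellations close.
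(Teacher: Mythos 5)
The paper does not prove this lemma at all: it is quoted as a standard fact from the Stein's-method literature (attributed to Stein's 1972 paper), so there is no in-paper argument to compare against. Your outline is essentially the classical proof, and its skeleton is sound: the integrating factor gives the twin representation $f_h(x)=e^{x^2/2}\int_x^\infty(h(y)-\E h(Z))e^{-y^2/2}\,dy$, uniqueness follows because the homogeneous solutions $ce^{x^2/2}$ are unbounded, and the sharp constants come from the kernel (covariance) representation. Two cautions. First, of the three routes you float for $|f_h|_\infty\le 1$, only the kernel one actually closes: the triangle-inequality splitting you describe bottoms out at the weaker bound $2$ (this is why many references state $\|f_h\|\le 2\|h'\|$), and the critical-point identity $f_h(x^*)=\tilde h(x^*)/x^*$ does not give $1$ either, since $\E|x-Z|/|x|$ exceeds $1$ for moderate $x$. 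What does work is writing, for $h\in C^1$,
\[
f_h(x)=\sqrt{2\pi}\,e^{x^2/2}\Bigl[\bar\Phi(x)\int_{-\infty}^x h'(u)\Phi(u)\,du+\Phi(x)\int_x^\infty h'(u)\bar\Phi(u)\,du\Bigr]=:\int_{\dR}K(x,u)h'(u)\,du,
\]
where $K\ge 0$ and $\int K(x,u)\,du\equiv 1$ (the case $h(u)=u$, $f_h\equiv 1$), which immediately yields $|f_h|_\infty\le\|h'\|_\infty$; differentiating this representation (equivalently, using $f_h'=xf_h-\tilde h$ and $f_h''=f_h+xf_h'-h'$ together with the kernel formula) gives $\sqrt{2/\pi}$ and $2$, with the supremum for $f_h'$ attained at $x=0$. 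Second, your write-up is a plan rather than a proof: the first- and second-derivative kernel computations are asserted ("the corresponding second-order computation returning $2$") rather than carried out, and these are precisely where the work lies; to make this a complete argument you would need to execute them, plus the mollification step extending from $C^1$ to general Lipschitz $h$.
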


Thus to prove that the distribution of $X$ is close to standard Gaussian distribution it is enough to prove that
\[
\sup_{f\in \cD'}|\E f'(X) - \E Xf(X)| 
\]
is small where \begin{equation}
\label{eqn:class-fn-stein}
	\cD'=\{f\mid  |f_{h}|_{\infty} \leq 1, |f'_{h}|_{\infty}\leq \sqrt{2/\pi} \text{ and } |f''_{h}|_{\infty}\leq 2\}.
\end{equation} 
This final portion is very much problem dependent and is often the hardest to accomplish. A number of general techniques have now been formulated, \eg\ exchangeable pair approach, dependency graph approach, size-bias transform, zero-bias transform etc.~that can be used for a large class of problems. We refer the interested reader to the surveys~\cites{MR2235448,MR2732624,MR2118599,ross12} and the references therein. However, in our case because of the high degree of dependency, the above mentioned methods are difficult to apply and we develop a new variant to bound the error.

\subsubsection{Open questions}
For the sake of mathematical tractability, we assumed that the underlying matrix had gaussian entries. It would be interesting to extend this analysis to general distributions. The exact statement of the results will be different since extremal properties of the gaussian distribution play a significant role in the proofs of the main results. The results in the  paper also suggest a host of extensions and new problems. Theorem \ref{thm:globalmax} deals with the global optimum in the regime where $\log{k} = o(\log{n})$. Extending this further, especially to the regime where $k=\alpha n$ for some $0<\alpha<1$ would be quite interesting and will require new ideas; one expects that the comparison to the independence regime using Lemma \ref{lem:compare} breaks down at this stage. We also expect behavior similar  to the extrema of branching random walk (\cites{aidekon11} and references within) in this regime.  Extending the local optima results to a regime $k=k(n)\to\infty$ as opposed to the fixed $k$ regime would be interesting. This would be especially relevant in the context of detecting matrices with average above a particular threshold which by Theorem \ref{thm:loc} corresponds to the $k(n)=C\log{n}$ regime. Finally this work fixes $k$ and then tries to find submatrices with large average. It would be interesting to develop algorithms which allow one to increase $k$ to achieve large submatrices with average above a threshold $\tau$.

\section{Preliminary Results}
\label{sec:est} 
In this section we present several technical lemmas that will be used in the proofs of the main results. We urge the reader to skim these results and come back to them as and when they are used.  Lemma \ref{lem:gauss_tail} and Lemma \ref{lem:extreme} collect standard results about tails and extreme value theory for the standard normal distribution.   Lemma \ref{lem:condz} provides  estimates of normal tail probabilities arising in the extreme value regime, while Lemma \ref{lem:tail} derives tail bounds and conditional distributions for the difference between the average and the minimum of $k$ independent standard Gaussian random variables. In Section~\ref{ssec:gcomp} we prove the Gaussian Comparison Lemma~\ref{lem:compare}. We conclude the section with some combinatorial estimates. 

\subsection{Gaussian tail bounds}
The following classical bound on the tail probabilities of the standard Gaussian, see e.g.~\cite{ross-book}, will be used repeatedly in what follows. 

\begin{lem}\label{lem:gauss_tail}
For each $x > 0$, we have
\[
\frac{xe^{-x^2/2}}{\sqrt{2\pi}(1+x^2)} \ \leq \ \bar{\Phi}(x) \ \leq \ \frac{e^{-x^2/2}}{\sqrt{2\pi}x}.
\]
Moreover, $xe^{x^2/2}\bar{\Phi}(x)$ is an increasing function for $x>0$.
\end{lem}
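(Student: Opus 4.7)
The proof proposal is to establish both tail estimates by elementary calculus and then obtain the monotonicity statement as a formal consequence of the lower bound.

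For the upper bound, I would work directly with the integral representation $\bar{\Phi}(x)=\int_x^\infty\phi(t)\,dt$ where $\phi(t)=(2\pi)^{-1/2}e^{-t^2/2}$. For $t\geq x>0$ one has $1\leq t/x$, so inserting this factor inside the integrand and recognizing that $t\phi(t)=-\phi'(t)$ gives
\[
\bar{\Phi}(x)\leq\frac{1}{x}\int_x^\infty t\phi(t)\,dt=\frac{\phi(x)}{x}=\frac{e^{-x^2/2}}{\sqrt{2\pi}\,x}.
\]
This is a one-line argument and should be the easy half.

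For the lower bound, the natural approach is to define
\[
f(x):=\bar{\Phi}(x)-\frac{x}{1+x^2}\cdot\frac{e^{-x^2/2}}{\sqrt{2\pi}}
\]
and verify two things: (i) $f(x)\to 0$ as $x\to\infty$, which is immediate since both terms are of order $e^{-x^2/2}/x$, and (ii) $f'(x)<0$ for all $x>0$. Computing $f'(x)$ amounts to differentiating the rational-times-Gaussian correction, and the key step is the algebraic identity that appears after collecting the common factor $-\phi(x)/(1+x^2)^2$: the bracketed numerator simplifies, after cancellation of the $x^4$ and $x^2$ terms, to the constant $2$. Hence $f'(x)=-2\phi(x)/(1+x^2)^2<0$, and combined with the vanishing limit at infinity this forces $f(x)>0$ on $(0,\infty)$, which is the claimed lower bound. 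This algebraic collapse to a constant is the main computational obstacle, but once one sets up the derivative correctly it is routine.

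For the monotonicity of $g(x):=xe^{x^2/2}\bar{\Phi}(x)$, I would just differentiate and simplify:
\[
g'(x)=e^{x^2/2}\bigl[(1+x^2)\bar{\Phi}(x)-x\phi(x)\bigr].
\]
The bracket is non-negative precisely by the lower bound just established (multiply both sides of the lower bound by $(1+x^2)$). Therefore $g'(x)\geq 0$ on $(0,\infty)$ and $g$ is increasing. In other words, the monotonicity is not an independent fact but a restatement of the lower bound, so no extra work beyond step two is required.
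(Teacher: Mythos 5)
Your proposal is correct. The paper does not actually prove this lemma --- it cites it as a classical bound from the literature --- so there is no in-paper argument to compare against; your self-contained derivation is the standard one. I verified the key algebraic step: writing $f(x)=\bar{\Phi}(x)-\tfrac{x}{1+x^2}\phi(x)$ one indeed gets $f'(x)=-2\phi(x)/(1+x^2)^2<0$, which together with $f(x)\to 0$ at infinity gives the lower bound, and the monotonicity of $xe^{x^2/2}\bar{\Phi}(x)$ is, as you say, equivalent to that lower bound via $g'(x)=e^{x^2/2}\bigl[(1+x^2)\bar{\Phi}(x)-x\phi(x)\bigr]\geq 0$.
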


\subsection{Extreme values}
  
Let $Z_{1}, Z_{2}, \ldots, Z_{N}$ be $N$ independent standard Gaussian random variables, and 
let $Z_{(1)} \leq Z_{(2)} \leq \cdots \leq Z_{(N)}$ be their ordered values.  
We will make use of the following standard result, see e.g.~\cite{ross-book}.

\vskip.1in

\begin{lem}\label{lem:extreme}
Let $\ell \geq 0$ be any fixed integer.   Then as $N$ tends to infinity,
\[
a_N (Z_{(N)} - b_N, Z_{(N-1)} - b_N,\ldots, Z_{(N-\ell)} - b_N) \Rightarrow (V_{1}, V_{2}, \ldots,V_{\ell}) ,
\]
where $V_{i} = - \log(T_{1}+T_{2}+\cdots+T_{i})$, and $T_1, \ldots, T_\ell$ are independent Exp$(1)$ random variables. 
\end{lem}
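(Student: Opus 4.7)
The plan is to prove the lemma via the standard point process approach to extreme value theory. Define the point process
\[
\cN_N := \sum_{i=1}^{N} \gd_{a_N(Z_i - b_N)}
\]
on $\dR$, and observe that the ordered statistics $Z_{(N)} > Z_{(N-1)} > \cdots > Z_{(N-\ell)}$ correspond, after the affine rescaling $z \mapsto a_N(z - b_N)$, to the top $\ell+1$ atoms of $\cN_N$. The heart of the argument is to show that $\cN_N$ converges in distribution (in the vague topology on the space of locally finite counting measures on $\dR$) to a Poisson point process $\cN$ on $\dR$ with intensity $e^{-x}\,dx$. Once this is established, an application of the continuous mapping theorem, applied to the functional that reads off the top $\ell+1$ atoms (which is continuous at measures with no atoms at infinity and only finitely many atoms above any level, almost surely under the Poisson limit), yields joint convergence of the order statistics.

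The first step is the pointwise tail asymptotic
\[
N\,\bar\Phi\!\left(b_N + \frac{y}{a_N}\right) \To e^{-y} \qquad \text{as } N\to\infty,
\]
for every $y\in\dR$. This follows from Lemma~\ref{lem:gauss_tail} which gives $\bar\Phi(x)\sim \phi(x)/x$ as $x\to\infty$, combined with a direct expansion of $(b_N + y/a_N)^2/2$ using the definitions of $a_N = \sqrt{2\log N}$ and $b_N$; the $\log(4\pi\log N)$ term in $b_N$ is chosen precisely to cancel the $\log$-terms in $\phi(b_N)/b_N$, leaving $e^{-y}$ in the limit.

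The second step is to upgrade this to Poisson convergence of $\cN_N$. For any half-open interval $(a,b]\subset\dR$, the count $\cN_N((a,b])$ is $\mathrm{Binomial}\bigl(N,\,\bar\Phi(b_N+a/a_N)-\bar\Phi(b_N+b/a_N)\bigr)$, whose success probability is $O(1/N)$ by Step~1 and whose expectation converges to $e^{-a}-e^{-b}$. Since the $Z_i$ are i.i.d., disjoint intervals produce independent limits by the standard Poisson convergence criterion for sums of independent Bernoulli arrays, so finite-dimensional distributions of $\cN_N$ converge to those of the Poisson process $\cN$ with intensity $e^{-x}\,dx$. This suffices for weak convergence in the vague topology.

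The third and final step is to identify the law of the top $\ell+1$ atoms of $\cN$. Under the map $x\mapsto e^{-x}$, the intensity $e^{-x}dx$ on $\dR$ pushes forward to Lebesgue measure on $(0,\infty)$, so the images of the atoms of $\cN$ form a standard Poisson process on $(0,\infty)$ whose ordered points from the origin are $T_1, T_1+T_2, T_1+T_2+T_3, \ldots$ with $T_j$ i.i.d.~$\mathrm{Exp}(1)$. Inverting the map, the top atoms of $\cN$ in decreasing order are $-\log(T_1), -\log(T_1+T_2), \ldots$, which is exactly the vector $(V_1,V_2,\ldots)$ in the statement. Combining the three steps yields the claimed joint convergence. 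The main technical point to be careful about is the continuity of the ``top $\ell+1$ atoms'' functional with respect to vague convergence, which holds because the limiting Poisson process almost surely has a strictly decreasing sequence of atoms with a largest one (i.e.\ no ties and no accumulation at $+\infty$); modulo this routine verification the argument is standard.
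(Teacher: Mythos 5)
Your argument is correct: the pointwise tail asymptotic $N\bar{\Phi}(b_N+y/a_N)\to e^{-y}$, the Poisson point process limit with intensity $e^{-x}\,dx$, and the identification of the top atoms via the push-forward under $x\mapsto e^{-x}$ together constitute the standard proof of this fact. The paper itself offers no proof — it cites the lemma as a classical result — so there is nothing to contrast with; your write-up is essentially the textbook point-process derivation, and the one point genuinely requiring care (continuity of the ``top $\ell+1$ atoms'' functional, i.e.\ no mass escaping to $+\infty$ and a.s.\ finitely many atoms above any level under the limit) is correctly flagged. Note only that the paper's statement has an off-by-one in its indexing (the left side lists $\ell+1$ order statistics while the right side lists $\ell$ limits); your proof delivers the intended conclusion with $V_i=-\log(T_1+\cdots+T_i)$ for $i=1,\ldots,\ell+1$.
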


The next lemma analyzes properties of  conditional distribution of a standard Gaussian conditioned to be large. 

\begin{lem}\label{lem:condz}
Let $Z$ be a standard Gaussian random variable and let $\theta>0$ be a fixed real number. 
Let the scaling and centering constants $a_{n}$ and $b_{n}$ be as in \eqref{eqn:an-def} 
and \eqref{eqn:bn-def}. 
Define $\cB_n(x)$ to be the event $\{Z\geq \sqrt{\theta}(b_n + a_n^{-1}x)\}$.

\begin{enumeratea}

\item\label{item:condza}
If $c_n = o(a_n)$, then $n^{\theta} \, (\sqrt{2\pi}b_n)^{1-\theta} \, e^{x\theta} \, \pr(\cB_n(x)) \to \theta^{-1/2}$ 
uniformly for $x$ with $|x| \leq c_n$. 

\item\label{item:condzb} 
Let $x \in \dR$.  Conditional on the event $\cB_n(x)$, the random variable $a_n(Z/\sqrt{\theta} - b_n - a_n^{-1}x)$ converges in distribution to an Exp$(\theta)$ random variable.
\end{enumeratea}
\end{lem}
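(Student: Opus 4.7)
The plan is to establish (a) by a direct asymptotic expansion of $\bar{\Phi}$ via the Mills-ratio bounds of Lemma \ref{lem:gauss_tail}, and then to derive (b) as an immediate consequence by writing the conditional survival function as a ratio and invoking (a) twice.

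For (a), set $y_n := \sqrt{\theta}(b_n + a_n^{-1}x)$. For $|x|\leq c_n=o(a_n)$ one has $y_n\to\infty$ uniformly, so Lemma \ref{lem:gauss_tail} yields $\bar{\Phi}(y_n)=(1+o(1))\,e^{-y_n^2/2}/(\sqrt{2\pi}\,y_n)$ uniformly. Using $a_n^2=2\log n$ and the explicit formula for $b_n$, I would expand
\[
\tfrac12 y_n^2 \;=\; \tfrac{\theta}{2}b_n^2 \,+\, \theta x\,\tfrac{b_n}{a_n} \,+\, \tfrac{\theta x^2}{2a_n^2} \;=\; \theta\log n \,-\, \tfrac{\theta}{2}\log(4\pi\log n) \,+\, \theta x \,+\, \eps_n(x),
\]
with $\eps_n(x)=O(|x|\log\log n/\log n)+O(x^2/\log n)+o(1)$; since $c_n=o(\sqrt{\log n})$, both error terms are uniformly $o(1)$ on $|x|\leq c_n$. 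Substituting and using $y_n\sim\sqrt{\theta}\,b_n$ in the denominator gives
\[
\bar{\Phi}(y_n) \;=\; \frac{1+o(1)}{\sqrt{2\pi\theta}\,b_n}\,n^{-\theta}\,(4\pi\log n)^{\theta/2}\,e^{-\theta x}
\]
uniformly in $|x|\leq c_n$. Because $(\sqrt{2\pi}\,b_n)^{1-\theta}\sim(4\pi\log n)^{(1-\theta)/2}$ and $b_n\sim\sqrt{2\log n}$, multiplying this asymptotic by $n^\theta(\sqrt{2\pi}\,b_n)^{1-\theta}e^{x\theta}$ collapses every factor depending on $n$ and leaves the advertised limit $\theta^{-1/2}$.

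For (b), fix $y\geq 0$. The set identity
\[
\cB_n(x)\cap\bigl\{a_n(Z/\sqrt{\theta}-b_n-a_n^{-1}x)\geq y\bigr\}\;=\;\cB_n(x+y)
\]
gives $\pr(a_n(Z/\sqrt{\theta}-b_n-a_n^{-1}x)\geq y\mid\cB_n(x)) = \pr(\cB_n(x+y))/\pr(\cB_n(x))$. Applying (a) to numerator and denominator with the bounded sequence $c_n\equiv |x|+y+1=o(a_n)$, all of the $n$- and $b_n$-dependent prefactors and the $\theta^{-1/2}$ constant cancel, leaving $e^{-\theta(x+y)}/e^{-\theta x}=e^{-\theta y}$, the Exp$(\theta)$ survival function. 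The main obstacle is really just the uniform bookkeeping of $\eps_n(x)$ in (a); the clean cancellation producing $\theta^{-1/2}$ is dictated by the specific choice of the centering $b_n$, so no further algebraic surprises arise.
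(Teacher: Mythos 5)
Your proposal is correct and follows essentially the same route as the paper: part (a) is the Mills-ratio estimate from Lemma \ref{lem:gauss_tail} combined with an expansion of $\tfrac12 y_n^2$ in which the choice of $b_n$ makes all $n$-dependent factors cancel, and part (b) is exactly the ratio identity $\pr(\cB_n(x+y))/\pr(\cB_n(x))\to e^{-\theta y}$ obtained from (a). Your uniform bookkeeping of the error terms for $|x|\leq c_n=o(a_n)$ matches the paper's (slightly terser) treatment.
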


\begin{proof}
(\ref{item:condza}) It follows from Lemma~\ref{lem:gauss_tail} and elementary algebra that
\begin{align*}
& n^{\theta}(\sqrt{2\pi}b_n)^{1-\theta} e^{x\theta} \pr(\cB_n(x)) \\[.08in]
& = \, n^{\theta}(\sqrt{2\pi}b_n)^{1-\theta}e^{x\theta}\pr(Z\geq \sqrt{\theta}(b_n + a_n^{-1}x)) \\[.08in]
& = \, \theta^{-1/2} \left( \frac{n}{\sqrt{2 \pi} b_n} \right)^\theta 
e^{x \theta} \exp(- \theta (b_n + a_n^{-1}x)^2 / 2) (1 + o(1)) \\[.08in]
& = \, \theta^{-1/2} \left( \frac{n}{\sqrt{2 \pi} b_n} \right)^\theta \exp\{-\theta b_n^2 / 2 \} 
\exp\{ \theta x (1 - b_n / a_n) \} \exp\{ \theta (x / a_n)^2 \} (1 + o(1))
\end{align*}
The fourth and fifth terms above tend to one with increasing $n$ by definition of $a_n$ and $b_n$, 
and our assumptions on $x$.  
A straightforward calculation shows that $n(\sqrt{2\pi} b_n)^{-1}e^{-b_n^2/2}$ tends to one as $n \to \infty$,
and therefore the product of the second and third terms above tends to one as well.   

\vskip.1in

\noindent(\ref{item:condzb}) The claim follows from the fact that for each $t \geq 0$, as $n \to \infty$, \\

$\qquad\displaystyle
\pr(a_{n}(Z/\sqrt{\theta}- b_n - a_n^{-1}x) \geq t \mid \cB_n(x) )
\ = \ \frac{\pr(\cB_n(x+t))}{\pr(\cB_n(x))} 
\ \to \ e^{-\theta t} .
$
\end{proof}

In order to analyze the asymptotic behavior of the expected number of local optima $\E(L_n(k))$, we need to  understand the way in which the minimum of a set of independent Gaussian random variables deviates from its the sample mean under various conditioning events. The next lemma establishes the relevant asymptotic 
results.

\vskip.1in

\begin{lem}\label{lem:tail}
Let $Z_{1},\ldots,Z_{k}$ be independent standard Gaussian random variables with sample mean
$\bar{Z}=k^{-1}\sum_{i=1}^{k}Z_{i}$ and minimum $Z_{\min}=\min_{1\leq i\leq k} Z_{i}$.

\vskip.1in

\begin{enumeratea}

\item\label{item:taila} The random variable $\bar{Z}-Z_{\min}$ is non-negative and its cumulative distribution function
$F(x) = \pr( \bar{Z}-Z_{\min} \leq x) = \alpha_{k} x^{k-1}(1+o(1))$ as $x \downarrow 0$, where $\alpha_k > 0$
is given by
\begin{equation}
\label{alphak}
\alpha_{k} = \frac{k^{k+1/2}}{k!(2\pi)^{(k-1)/2}}.
\end{equation}

\vskip.1in

\item\label{item:tailb} 
For $\eps > 0$ let $\cB_{\eps}$ be the event $\{\bar{Z}-Z_{\min}\leq \eps\}$.  Then as $\eps \downarrow 0$,
\[
{\mathcal L} \{ \eps^{-1}(\bar{Z}-Z_1,\ldots, \bar{Z}-Z_k) \mid \cB_{\eps} \}
\, \probd \, 
(1-kU_1,\ldots, 1-kU_k )
\]
where $\vU=(U_{1},\ldots,U_{k})$ has a Dirichlet$(1,\ldots,1)$ distribution, \ie\ 
$\vU$ is uniformly distributed on the simplex 
$\{ (x_1,\ldots,x_{k})\mid x_{1}+\cdots+x_{k}=1, x_{1} \ldots, x_{k} \geq 0 \}$.

\vskip.2in

\item\label{item:tailc} There exists a positive constant $g_k > 0$ such that 
\[
\pr(\bar{Z}-Z_{\min}\geq x) = \frac{g_k}{x}\exp\left(-\frac{kx^2}{2(k-1)}\right)(1+o(1)) \text{ as } x \uparrow \infty.
\]

\vskip.1in

\end{enumeratea}
\end{lem}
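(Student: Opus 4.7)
The plan is to work in the orthogonal decomposition $Z_i = \bar Z + W_i$, where $\bar Z \sim N(0, 1/k)$ is independent of the centered vector $\vW = (W_1,\ldots,W_k)$ supported on the zero-sum hyperplane in $\dR^{k}$. A short Jacobian calculation (the linear map $(Z_1,\ldots,Z_k)\mapsto(\bar Z, W_1,\ldots,W_{k-1})$ has determinant $1/k$, with $W_k := -\sum_{i<k}W_i$) combined with integrating out $\bar Z$ gives the joint density
\[
g(w) \;=\; \sqrt{k}\,(2\pi)^{-(k-1)/2}\exp\Bigl(-\tfrac12\textstyle\sum_{i=1}^{k} w_i^2\Bigr)
\]
for $(W_1,\ldots,W_{k-1})$ on $\dR^{k-1}$. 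Since $\bar Z - Z_{\min} = -W_{\min}$, all three parts reduce to studying $-W_{\min}$: parts (\ref{item:taila}) and (\ref{item:tailb}) near zero, part (\ref{item:tailc}) at infinity.

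For (\ref{item:taila}) and (\ref{item:tailb}), the event $\cB_\eps = \{-W_{\min} \leq \eps\}$ combined with $\sum_i W_i = 0$ forces the vector $V_i := (W_i+\eps)/(k\eps)$ into the standard $(k-1)$-simplex $\Delta_{k-1}$. I would substitute $V$ for $W$ in the integral of $g$, picking up a Jacobian factor $(k\eps)^{k-1}$; on the shrinking region the Gaussian factor tends uniformly to $1$, and $\mathrm{vol}(\Delta_{k-1}) = 1/(k-1)!$, so
\[
\pr(\cB_\eps) \;=\; \sqrt{k}\,(2\pi)^{-(k-1)/2}\cdot\frac{(k\eps)^{k-1}}{(k-1)!}(1+o(1)) \;=\; \alpha_k\,\eps^{k-1}(1+o(1)),
\]
after checking $k^{k-1/2}/(k-1)! = k^{k+1/2}/k!$. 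The same computation shows that the conditional density of $\vV$ given $\cB_\eps$ converges to the uniform density on $\Delta_{k-1}$, which is $\mathrm{Dirichlet}(1,\ldots,1)$; since $\eps^{-1}(\bar Z - Z_i) = -\eps^{-1}W_i = 1 - kV_i$, part (\ref{item:tailb}) follows at once from the continuous mapping theorem.

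For (\ref{item:tailc}), writing $\{-W_{\min}\geq x\} = \bigcup_{i=1}^k \{W_i \leq -x\}$ and using $W_1 \sim N(0, (k-1)/k)$, a union bound together with Lemma~\ref{lem:gauss_tail} yields the upper bound
\[
\pr(-W_{\min}\geq x)\;\leq\; k\,\bar\Phi\bigl(x\sqrt{k/(k-1)}\bigr) \;=\; \frac{g_k}{x}\exp\Bigl(-\frac{kx^2}{2(k-1)}\Bigr)(1+o(1)),
\]
with $g_k = \sqrt{k(k-1)/(2\pi)}$. To show this is tight I would use inclusion--exclusion, bounding the pairwise tails $\pr(W_i\leq -x, W_j\leq -x)$ via the bivariate Gaussian tail for $(W_i,W_j)$, which has $\cov(W_i,W_j) = -1/k$, i.e.\ standardized correlation $-1/(k-1)\in(-1,0]$. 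A direct Chernoff/rate-function calculation shows this joint probability decays at a strictly larger exponential rate than the single-index tail for $k \geq 3$, so the cross terms are asymptotically negligible. The case $k = 2$ is degenerate because $W_1 = -W_2 = (Z_1-Z_2)/2$, so $-W_{\min} = |Z_1-Z_2|/2$ and the claim with $g_2 = 1/\sqrt{\pi}$ follows directly from the Gaussian tail.

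The main obstacle is the tightness argument in (\ref{item:tailc}): one must verify that the pairwise Gaussian joint tails really do have a strictly larger exponential rate, which requires a careful computation of the quadratic form $(1,1)\Sigma^{-1}(1,1)^{T}$ for the $2\times 2$ covariance of $(W_i, W_j)$, with extra care near the degenerate case $k=2$. Parts (\ref{item:taila}) and (\ref{item:tailb}) are essentially geometric bookkeeping once the shrinking-simplex picture inside the zero-sum hyperplane is identified.
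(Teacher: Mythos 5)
Your proof is correct, and it takes a genuinely different route from the paper's. For part (a) the paper first sandwiches $\bar{Z}-Z_{\min}$ between $k^{-1}(Z_{\max}-Z_{\min})$ and $Z_{\max}-Z_{\min}$ to get the order $x^{k-1}$, and then extracts the constant $\alpha_k$ through a Tauberian-style evaluation of the Laplace transform $\E(e^{-t(\bar{Z}-Z_{\min})})$, using the independence of $\bar Z$ from the centered vector and a fairly delicate asymptotic analysis of $\int\Phi(x-t)^{k-1}\phi(x)\,dx$ as $t\to\infty$; your shrinking-simplex computation on the zero-sum hyperplane gets the constant directly (and your arithmetic checks out: the density normalization $\sqrt{k}(2\pi)^{-(k-1)/2}$, the Jacobian $(k\eps)^{k-1}$, and $k^{k-1/2}/(k-1)!=k^{k+1/2}/k!$ all reproduce \eqref{alphak}). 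It also hands you part (b) for free as uniform convergence of the conditional density to the uniform density on the simplex, whereas the paper argues by tightness plus translation-invariance of subsequential limits. For part (c) the paper only records the two-sided bound $\pr(\bar{Z}-Z_1\geq x)\leq\pr(\bar{Z}-Z_{\min}\geq x)\leq k\,\pr(\bar{Z}-Z_1\geq x)$, whose constants differ by a factor of $k$; your Bonferroni argument is the more complete one, and the rate comparison you flag as the main obstacle does go through: with $\rho=-1/(k-1)$ and marginal variance $(k-1)/k$ the corner of the quadrant gives exponent $kx^2/(k-2)$ for the pairwise tail versus $kx^2/(2(k-1))$ for the marginal, and $2(k-1)>k-2$ always, so the cross terms are negligible for $k\geq 3$ and you even identify $g_k=\sqrt{k(k-1)/(2\pi)}$ explicitly (consistent with your direct $k=2$ computation). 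In short, your approach is more elementary and yields a sharper conclusion in (c); the paper's buys the constant in (a) from moment-generating-function asymptotics instead of geometry.
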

\begin{proof}
\eqref{item:taila}  
Clearly, $\bar{Z}-Z_{\min}$ is non-negative, and it is easy to see that 
\[
k^{-1}(Z_{\max}-Z_{\min}) \, \leq \, \bar{Z}-Z_{\min} \, \leq \, Z_{\max}-Z_{\min}
\] 
where $Z_{\max}=\max_{1 \leq i \leq k} Z_{i}$.  Thus for all $x \geq 0$,  
\begin{equation}
\label{maxmin}
\pr(Z_{\max}-Z_{\min}\leq x) \, \leq \, \pr(\bar{Z}-Z_{\min}\leq x) \, \leq \, \pr( Z_{\max}-Z_{\min} \leq kx) 
\end{equation}
One may readily verify that 
\begin{align*}
\pr(Z_{\max}-Z_{\min}\leq x) = \int_{-\infty}^{\infty} k (\Phi(t+x)-\Phi(t))^{k-1} \phi(t) dt
\end{align*}
where $\phi(t)$ is the standard normal density. The integral above behaves like a constant $\alpha_k$
times $x^{k-1}$ as $x\downarrow 0$, and the first claim follows from (\ref{maxmin}).

We now evaluate the value of the constant $\alpha_k$.  Note that the $F(x)$ is continuous and 
that for $t \geq 0$,
\[
\E(\exp\{-t(\bar{Z}-Z_{\min})\})
\ = \ 
\int_0^\infty F(t^{-1} x) \, e^{-x} \, dx .
\]
The last equation and the behavior of $F(\cdot)$ near zero imply that
\begin{equation*}
\alpha_{k} \ = \ \lim_{t \to\infty} \frac{t^{k-1}}{(k-1)!} \E(\exp\{-t(\bar{Z}-Z_{\min})\}) .
\end{equation*}
A standard covariance calculation shows that $\bar{Z}$ is independent of  
$(\bar{Z}-Z_1,\ldots, \bar{Z}-Z_k)$, and therefore $\bar{Z}$ is independent of  
$\bar{Z}-Z_{\min}$.  It follows that for $t \geq 0$,
\begin{align}
\E(\exp\{-t(\bar{Z}-Z_{\min})\})
& = \E(e^{tZ_{\min}}) / \E(e^{t\bar{Z}}) \notag\\[.1in]
&= ke^{-t^2/2k}\int_{\dR} e^{-tx} \Phi(x)^{k-1} \phi(x) dx \notag\\[.1in]
&= ke^{(k-1)t^2/2k}\int_{\dR} \Phi(x-t)^{k-1}\phi(x) dx \label{eq:mgf}
\end{align}
where we have used the fact that $\E(e^{t\bar{Z}})=e^{t^2/2k}$ and 
$\E(e^{tZ_{\min}})= \int_{\dR} ke^{-tx} \Phi(x)^{k-1}\phi(x) dx$.
Note that 
\begin{align}
\int_{\dR} \Phi(x-t)^{k-1}\phi(x) \, dx 
&= \int_{-\infty}^{(1-1/2k)t} \Phi(x-t)^{k-1}\phi(x) \, dx +  \int_{(1-1/2k)t}^{\infty} \Phi(x-t)^{k-1} \phi(x) \, dx \notag \\[.1in]
&= \int_{-\infty}^{-t/2k} \Phi(x)^{k-1}\phi(x+t) \, dx +  O\big(\bar{\Phi}((1-1/2k)t) \big).\label{eq:2terms}
\end{align}
From \eqref{alphak}, \eqref{eq:mgf}, \eqref{eq:2terms} and the fact that
\[
\bar{\Phi}(x)=\Phi(-x)= \frac{1}{\sqrt{2\pi}x}e^{-x^2/2}(1-O(x^{-2})) \text{ for } x\to\infty
\]
we find that
\begin{align*}
\ga_{k}&= \lim_{t\to\infty} \frac{ke^{(k-1)t^2/2k}t^{k-1}}{(k-1)!}
\left(\int_{-\infty}^{-t/2k} \Phi(x)^{k-1} \phi(x+t) dx +  O(\bar{\Phi}((1-1/2k)t)\right) \\[.1in]
&=\lim_{t \to\infty} \frac{k^2e^{(k-1) t^2/2k} t^{k-1}}{k!(2\pi)^{k/2}}
\left(\int_{-\infty}^{-t/2k} |x|^{-(k-1)} e^{- \{(k-1)x^2/2 + (x+t)^2/2 \}}dx +  O\big(t^{-1}e^{-(1-1/2k)^2t^2/2} \big) \right) \\[.1in]
&=\lim_{t\to\infty} \frac{k^2}{k!(2\pi)^{k/2}}
\left(\int_{-\infty}^{-t/2k} (t/|x|)^{k-1}e^{-k(x+t/k)^2/2}dx +  O\Big( t^{k-2} e^{ \{(k-1)t^2/2k-(2k-1)^2t^2/8k^2 \} } \Big) \right) \\[.1in]
&=\lim_{t\to\infty} \frac{k^2}{k!(2\pi)^{k/2}}
\left(\int_{-\infty}^{t/2k} (1/k-x/t)^{-k+1}e^{-kx^2/2}dx +  O(t^{k-2}e^{-t^2/8k^2})\right) \\[.1in]
&= \frac{k^2}{k!(2\pi)^{k/2}} \int_{-\infty}^{\infty} k^{k-1}e^{-kx^2/2}dx 
=\frac{ k^{k+1/2}}{k!(2\pi)^{(k-1)/2}}.
\end{align*}
as desired.

\vskip.16in

\noindent\eqref{item:tailb}  
Fix $\eps> 0$ for the moment and write $\pr_\eps$ for the conditional distribution of 
$\eps^{-1}(\bar{Z}-Z_1,\bar{Z}-Z_2,\ldots, \bar{Z}-Z_k)$ given $\cB_{\eps}$. 
The distribution $\pr_\eps$ is supported on the simplex 
$\gD_{k}: = \{ (x_1,\ldots,x_{k}) \mid x_{1}+\cdots+x_{k} = 0, x_{i} \leq 1\}$ with extreme points $v_1,\ldots,v_k$, where
$v_{i}:=(1,\ldots,1, 1-k,1,\ldots,1)$ with $1-k$ in the $i$-th position. 
Note that $\pr_eps$ is invariant under coordinate permutations and that, on $\cB_\eps$, each of $Z_1,\ldots,Z_k$ is contained 
in a common interval of length $k \eps$. Clearly $\set{\pr_\eps: \eps> 0}$ is a tight family of probability measures on $\bR^k$.  
The properties above ensure that every subsequential limit of $\pr_\eps$ as $\eps\downarrow 0$ is translation invariant, 
and hence uniform, on $\gD_{k}$.  On the other hand, given a Dirichlet$(1,\ldots,1)$ random vector 
$\vU=(U_{1},\ldots,U_{k})$ the sum $\sum_{i=1}^{k}U_{i}v_{i}=(1-kU_1,1-kU_2,\ldots, 1-kU_k )$ is uniformly distributed 
on the simplex $\gD_{k}$. \\

\noindent\eqref{item:tailc} 
The relation $\bar{Z} - Z_{\min} = \max_{1 \leq i \leq k}\{\bar{Z} - Z_i\}$ implies that
\[
\pr(\bar{Z}-Z_1\geq x)\leq \pr(\bar{Z}-Z_{\min}\geq x) \leq k \pr(\bar{Z}-Z_1\geq x).
\]
The claim now follows from the fact $\bar{Z}-Z_{1}$ is normal with mean zero and variance $(k-1)/k$, 
and that
\[
\pr(Z\geq x) = \frac{e^{-x^2/2}}{\sqrt{2\pi} x} (1+o(1))
\]
as $x \uparrow \infty$.
\end{proof}

\vskip.1in

\subsection{Maxima of two correlated gaussian r.v.s}
Let $(Z,Z_\rho)$ be a bivariate gaussian random vector with $\E(Z)=\E(Z_\rho)=0$, 
$\var(Z) = \var(Z_\rho) = 1$, and $\E(ZZ_\rho) = \rho \geq 0$.  Several of our results require 
bounds on the conditional probability $\pr(Z_\rho> x \mid Z > x)$ when $x$ is large.
Without loss of generality, assume that $Z_\rho = \rho Z + \sqrt{1-\rho^2}Z'$ where 
$Z'$ is an independent copy of $Z$.   An argument like that in 
Lemma~\ref{lem:condz}(b) shows that, conditional on the event $\cA = \{Z>x\}$, 
the random variable $x(Z-x)$ is tight, and in particular, $Z$ is concentrated around $x$.
Thus, conditional on $\cA$, the event $\{ \rho Z + \sqrt{1-\rho^2}Z' > x \}$ 
is roughly the same as $\{ Z' > \theta x \}$ with $\theta = \sqrt{(1-\rho)/(1+\rho)}$.
The following result from \cite{MR2104113} makes these ideas precise.

\begin{lem}
\label{lem:biv-norm-tail}
Let $Z,Z'$ be independent standard Gaussian random variables.  For any $\rho\in [0,1]$ and $x>0$ we have 

\[
\bar{\Phi}(\theta x) \leq \pr(\rho Z + \sqrt{1-\rho^2}Z'> x \mid Z> x) \leq (1+\rho) \bar{\Phi}(\theta x) 
\]
\vskip.1in
\noindent
where $\theta = \sqrt{(1-\rho)(1+\rho)}$.
\end{lem}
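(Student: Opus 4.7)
The plan is to compute $\pr(Z_\rho > x, Z > x)$ by conditioning on $Z$, and to extract both bounds from the resulting integral. Write $Z_\rho = \rho Z + \sqrt{1-\rho^2}\,Z'$ with $Z'$ an independent standard normal. Then
$$\pr(Z_\rho > x \mid Z = z) \,=\, \bar\Phi\!\left(\frac{x-\rho z}{\sqrt{1-\rho^2}}\right),$$
and since the argument equals $\theta x$ at $z = x$ and decreases as $z$ grows, integrating against the density of $Z$ gives
$$\pr(Z_\rho > x, Z > x) \,=\, \int_x^\infty \bar\Phi\!\left(\frac{x-\rho z}{\sqrt{1-\rho^2}}\right)\phi(z)\,dz \,\geq\, \bar\Phi(\theta x)\bar\Phi(x),$$
which after dividing by $\bar\Phi(x)$ yields the left inequality of the lemma.

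For the upper bound I would integrate by parts using $\phi(z) = -\bar\Phi'(z)$. The $z = \infty$ boundary term vanishes since $\bar\Phi(z) \to 0$, while the $z = x$ boundary term produces exactly $\bar\Phi(\theta x)\bar\Phi(x)$, so
$$\pr(Z_\rho > x, Z > x) \,=\, \bar\Phi(\theta x)\bar\Phi(x) \,+\, \frac{\rho}{\sqrt{1-\rho^2}}\int_x^\infty \phi\!\left(\frac{x-\rho z}{\sqrt{1-\rho^2}}\right)\bar\Phi(z)\,dz.$$
It therefore suffices to show the correction integral is at most $\rho\, \bar\Phi(\theta x)\bar\Phi(x)$.

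The crucial step is to recognize that the product $\phi((x-\rho z)/\sqrt{1-\rho^2})\,\phi(z)$ is, up to a factor of $\sqrt{1-\rho^2}$, the bivariate standard normal density with correlation $\rho$ at $(x,z)$. Completing the square in the exponent gives the factorization
$$\phi\!\left(\frac{x-\rho z}{\sqrt{1-\rho^2}}\right)\phi(z) \,=\, \sqrt{1-\rho^2}\,\phi(x)\cdot \frac{1}{\sqrt{2\pi(1-\rho^2)}}\,e^{-(z-\rho x)^2/(2(1-\rho^2))},$$
and integrating the second factor from $x$ to $\infty$ yields $\bar\Phi(\theta x)$ after a change of variables. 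To replace $\bar\Phi(z)$ with something compatible I would invoke the classical fact that Mills' ratio $\bar\Phi(z)/\phi(z)$ is decreasing, so $\bar\Phi(z) \leq \bar\Phi(x)\,\phi(z)/\phi(x)$ for $z \geq x$. Combining these yields
$$\int_x^\infty \phi\!\left(\frac{x-\rho z}{\sqrt{1-\rho^2}}\right)\bar\Phi(z)\,dz \,\leq\, \frac{\bar\Phi(x)}{\phi(x)}\cdot \sqrt{1-\rho^2}\,\phi(x)\,\bar\Phi(\theta x) \,=\, \sqrt{1-\rho^2}\,\bar\Phi(x)\,\bar\Phi(\theta x),$$
and multiplying by $\rho/\sqrt{1-\rho^2}$ delivers the promised $\rho\, \bar\Phi(\theta x)\bar\Phi(x)$.

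The hardest step is the choice of majorization for $\bar\Phi(z)$ in the correction integral: the crude bound $\bar\Phi(z) \leq \phi(z)/z$ loses a factor of order $x$ and fails to produce the sharp constant $(1+\rho)$, whereas the Mills-ratio monotonicity bound is tight at $z = x$ and yields exactly the right answer. The case $\rho = 1$ is trivial ($Z_\rho = Z$ so the conditional probability is $1 = (1+\rho)\bar\Phi(0)$), so one may assume $\rho \in [0,1)$ throughout and the factor $1/\sqrt{1-\rho^2}$ causes no difficulty. Everything else is routine Gaussian calculus.
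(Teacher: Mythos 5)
Your proof is correct. The paper does not prove this lemma at all --- it is quoted from an external reference (Lemma 3.5 is attributed to \cite{MR2104113}) --- so there is no in-paper argument to compare against; your integration-by-parts identity, the bivariate-density factorization $\phi\bigl((x-\rho z)/\sqrt{1-\rho^2}\bigr)\phi(z)=\sqrt{1-\rho^2}\,\phi(x)\cdot\phi_{\rho x,1-\rho^2}(z)$, and the Mills-ratio monotonicity bound $\bar\Phi(z)\leq \bar\Phi(x)\phi(z)/\phi(x)$ for $z\geq x$ all check out and together give exactly the constant $(1+\rho)$. One remark: the exponent $\theta$ in the lemma as printed, $\sqrt{(1-\rho)(1+\rho)}$, is a typo for $\sqrt{(1-\rho)/(1+\rho)}$ (this is the value used everywhere else in the paper, e.g.\ in Lemma \ref{lem:compare} and in the heuristic preceding the statement), and it is the value your computation $(x-\rho x)/\sqrt{1-\rho^2}=\theta x$ correctly produces, so your argument proves the intended statement.
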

\vskip.2in

\subsection{Gaussian Comparison Lemma: Proof of Lemma~\ref{lem:compare}}\label{ssec:gcomp}
Let $\gS_{1} = \{ \gs_{ij} : 1 \leq i,j \leq N \}$ be the covariance matrix of the random
vector $(X_1,\ldots,X_N)$, and let
$\gS_{0}$ be the $N \times N$ identity matrix. 
Let $\vX^0 \sim \text{N}(\mvzero,\gS_{0})$ and $\vX^1 \sim \text{N}(\mvzero,\gS_{1})$ be independent
random vectors.  For $t \in [0,1]$ define
\begin{equation}
\label{eqn:xt-def}
	\vX^{t} := \sqrt{t}\vX^{1}+\sqrt{1-t} \vX^{0} .
\end{equation} 
Note that $\vX^{t} \sim \text{N}(\mvzero,\gS_t)$,
where $\gS_{t}=t\gS_{1}+(1-t)\gS_{0}$.

Let $G(\vx)$ be a smooth function of $N$ variables $\vx=(x_1,x_2,\ldots,x_N)$. 
Let $G_{i}(\vx) = (\partial G / \partial x_i) (\vx)$ and 
$G_{ij}(\vx) = (\partial^{2} G / \partial x_i \partial x_j)(\vx)$ denote the first
and second order partial derivatives of $G$.  We claim that
\begin{equation}\label{diffegx}
\E[G(\vX^1)] - \E[G(\vX^0)] \ = \ \sum_{i<j} \gs_{ij} \int_0^1 \E(G_{ij}(\vX^t)) \, dt.
\end{equation}
To see this, note that $\vX^{t}\equald \gS_{t}^{1/2}\vX^{0}$, and therefore
\begin{align*}
\E G(\vX^1)  - \E G(\vX^0)  
&= \int_{0}^{1} \frac{d}{dt} \E(G(\gS_{t}^{1/2}\vX^{0})) \, dt \\
&= \sum_{i,j=1}^{N}\int_{0}^{1} \E\left(G_i(\gS_{t}^{1/2}\vX^{0}) \, \frac{d}{dt}(\gS_{t}^{1/2})_{ij} \, X^0_j \right) dt \\
&= \sum_{i,k=1}^{N}\int_{0}^{1} \E\biggl( G_{ik}(\gS_{t}^{1/2}\vX^{0}) \, 
              \sum_{j=1}^{k} (\gS_{t}^{1/2})_{kj} \, \frac{d}{dt}(\gS_{t}^{1/2})_{ij} \biggr) dt
\end{align*}
where the last equality follows by conditioning and Gaussian integration by parts. Using the 
symmetry of the matrix $\gS^{1/2}_{t}$ and simplifying we have
\begin{align*}
\E(G(\vX^1)) - \E(G(\vX^0))  
&=\frac{1}{2}\sum_{i,k=1}^{N}\int_{0}^{1} \E\left( \bigr( 2 \gS_{t}^{1/2}\frac{d}{dt}(\gS_t^{1/2})\bigl)_{ik}  G_{ik}(\vX^t) \right) dt \\[.1in]
&=\frac{1}{2}\sum_{i,k=1}^{N}\int_{0}^{1} \E( (\gS_1-\gS_0)_{ik}  G_{ik}(\vX^t) ) \, dt
\ = \, \sum_{i<j} \gs_{ij}\int_0^1 \E(G_{ij}(\vX^t)) \, dt,
\end{align*}
where in the second line we have used the fact that 
$2 \gS_{t}^{1/2} \, d(\gS_{t}^{1/2}) / dt=d(\gS_{t}) / dt=\gS_{1}-\gS_{0}$.
This establishes (\ref{diffegx}).  

Fix $\eps > 0$ for the moment, and let $G^{\eps}(\vx) = \prod_{i=1}^{N} \Phi (\eps^{-1}(u-x_i))$, 
where $\Phi$ is the CDF of the standard Gaussian.  Let $X_i^t$ be the $i$'th component of $\vX^{t}$. 
Applying equation (\ref{diffegx}) to $G^\eps$ yields the inequality
\begin{align*}
|\E(G^\eps(\vX^1)) &- \E(G^\eps(\vX^0))| \\
& \leq \ \sum_{i<j} |\gs_{ij}| \int_0^1 \E(\eps^{-2} \phi(\eps^{-1}(u-X_i^{t})) \, \phi(\eps^{-1}(u-X_j^{t})) ) dt\\
& = \ \sum_{i<j} |\gs_{ij}| \int_0^1 \E( f_{ij}^{t}(u+\eps Z_1,u+\eps Z_2)) dt
\end{align*}
where $f_{ij}^{t}(x,y)$ is the joint density of $(X_{i}^{t},X_{j}^{t})$ and $Z_{1},Z_{2}$ 
are independent Gaussian random variables.  
Letting $\eps$ tend to zero, and using the fact that 
\[
\lim_{\eps \to 0} G^\eps(\vx) \ = \ \ind\set{\max_{1 \leq i \leq N} x_i \leq u }
\] 
if $x_i \neq u_i$ for $i = 1,\ldots, N$, we find that
\vskip-.05in
\[ 
\left| \pr \left( \max_{1 \leq i \leq N} X_i \leq u \right) - \pr \left( \max_{1 \leq i \leq N} Z_i \leq u \right) \right| 
\ \leq \ 
\sum_{i<j} |\gs_{ij}| \int_0^1 f_{ij}^{t}(u,u) \, dt.
\]

\vskip.1in

To complete the proof, we analyze a typical term in the previous display.  For fixed $i < j$ and $u \in \bR$ we have   
\[
f_{ij}^{t}(u,u) \ = \ \frac{e^{-u^2/(1+\gs_{ij} t)}}{2\pi\sqrt{1-\gs_{ij}^2t^2}}
\]
and therefore
\begin{align*}
|\gs_{ij}| \int_0^1 f_{ij}^{t}(u,u) dt
\ \leq \ 
\left| \int_{0}^{\gs_{ij}} \frac{e^{-u^2/(1+t)}}{2\pi\sqrt{1- t^2}} \, dt \right|.
\end{align*}
Making the change of variable $x = \sqrt{(1-t)/(1+t)}$ we find that
\begin{align*}
\left| \int_{0}^{\gs_{ij}} \frac{e^{-u^2/(1+t)}}{2\pi\sqrt{1- t^2}} \, dt \right| \ 
&\leq \ \pi^{-1}e^{-u^2/2}\left| \int_{\theta_{ij}}^{1} \frac{e^{-(ux)^2/2} }{1+x^2}  dx \right|\\[.1in]
&\leq \ \pi^{-1}e^{-u^2/2}\left| \int_{\theta_{ij}}^{1} e^{-(ux)^2/2} dx \right|
\leq \ 2\bar{\Phi}(u)  |\bar{\Phi}(u) - \bar{\Phi}(\theta_{ij}u)|
\end{align*}
where $\theta_{ij}=\sqrt{(1-\gs_{ij})/(1+\gs_{ij})} \geq 0$.  
Considering separately the case $\theta_{ij} \leq 1$ and $\theta_{ij} > 1$,
the concavity of $\Phi(x)$ for $x \geq 0$, and the inequality 
$\bar{\Phi}(x)\geq \phi(x)/(1+x)$ yield
\begin{align*}
|\bar{\Phi}(u) - \bar{\Phi}(\theta_{ij}u)| \ 
& \leq \ 
\min\{ \bar{\Phi}(\theta_{ij}^1 u), |1-\theta_{ij}| u \phi(\theta_{ij}^1 u) \} \\[.1in]
& \leq \ \bar{\Phi}(\theta^{1}_{ij}u) \min\{ 1, |1-\theta_{ij}|u(1+\theta^{1}_{ij}u) \}
\end{align*}
where $\theta_{ij}^{1}=\min\{\theta_{ij},1\}$ and hence
\begin{align}\label{eq:integrated}
	\left| \int_{0}^{\gs_{ij}} \frac{e^{-u^2/(1+t)}}{2\pi\sqrt{1- t^2}} \, dt \right| \ 
&\leq 2\bar{\Phi}(u) \bar{\Phi}(\theta^{1}_{ij}u) \min\{ 1, |1-\theta_{ij}|u(1+\theta^{1}_{ij}u) \}.
\end{align}
This completes the proof of the first inequality in Lemma~\ref{lem:compare}.  The second inequality follows from the fact that $xe^{\frac{1}{2}x^2}\bar{\Phi}(x)$ is an increasing function for $x\geq 0$ and thus $\bar{\Phi}(\theta u)\leq \bar{\Phi}(u)  \theta^{-1}e^{\frac{1}{2}(1-\theta^2)u^2}$ for all $\theta\in[0,1]$.\hfill\qed

\subsection{Combinatorial estimates}
For $n \geq 1$ and $1 \leq k \leq n$ let $(n)_{k}:={n!}/{(n-k)!}$. The following bound follows easily from Stirling's approximation. 

\begin{lem}\label{lem:comb}
For $n \geq 1$ and $1 \leq k \leq \sqrt{n}$, 
\[
\frac{(n)_k}{n^k} = e^{-k^2/2n + O(k/n)}.
\]
\end{lem}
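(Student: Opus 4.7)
The plan is to take logarithms and reduce everything to a short Taylor expansion. Write
\[
\log\!\frac{(n)_k}{n^k}\ =\ \sum_{i=0}^{k-1}\log\!\Bigl(1-\frac{i}{n}\Bigr),
\]
noting that for $k\leq\sqrt{n}$ the largest argument is $(k-1)/n\leq 1/\sqrt{n}$, which is uniformly bounded away from $1$, so the expansion $\log(1-x)=-\sum_{m\geq 1}x^{m}/m$ is valid with tractable remainders.

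The first step is to extract the leading term. Using $\sum_{i=0}^{k-1}i = k(k-1)/2$ gives
\[
-\sum_{i=0}^{k-1}\frac{i}{n}\ =\ -\frac{k^{2}}{2n}+\frac{k}{2n}\ =\ -\frac{k^{2}}{2n}+O\!\bigl(k/n\bigr),
\]
which already produces the main term and a contribution of the right order to the error.

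The second step is to control the higher-order terms of the Taylor series uniformly in $k\leq\sqrt{n}$. For each $m\geq 2$,
\[
\Bigl|\sum_{i=0}^{k-1}\frac{(i/n)^{m}}{m}\Bigr|\ \leq\ \frac{k^{m+1}}{m\, n^{m}}\ =\ \frac{k}{n}\cdot\frac{1}{m}\Bigl(\frac{k^{2}}{n}\Bigr)\!\Bigl(\frac{k}{n}\Bigr)^{m-2},
\]
and since $k^{2}/n\leq 1$ and $k/n\leq 1/\sqrt{n}\to 0$, the sum over $m\geq 2$ is bounded by a geometric series whose total is $O(k/n)$. Combining with the first step yields
\[
\log\!\frac{(n)_{k}}{n^{k}}\ =\ -\frac{k^{2}}{2n}+O(k/n),
\]
which exponentiates to the claim.

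I do not foresee a genuine obstacle: the only thing to watch is that the remainder estimate must be genuinely uniform in $k$, and the condition $k\leq\sqrt{n}$ is used precisely at the point where $k^{2}/n\leq 1$ makes the higher-order corrections absorbable into $O(k/n)$. (Alternatively, one could derive the same identity by plugging Stirling's expansion for $\log n!$ and $\log(n-k)!$ into $\log((n)_{k})$ and expanding $(n-k)\log(1-k/n)$, but the telescoped form via $\sum_{i}\log(1-i/n)$ is the cleanest route.)
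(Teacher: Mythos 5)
Your proof is correct. The paper does not actually write out an argument for this lemma; it simply asserts that the bound ``follows easily from Stirling's approximation,'' so there is nothing to match step-by-step. Your route via the exact identity $(n)_k/n^k=\prod_{i=0}^{k-1}(1-i/n)$ and the termwise expansion of $\log(1-i/n)$ is an equally valid and in fact more self-contained derivation: it avoids Stirling entirely, the leading term $-k(k-1)/2n=-k^2/2n+O(k/n)$ comes out exactly, and your uniform bound $\sum_{i=0}^{k-1}(i/n)^m/m\leq \frac{k}{n}\cdot\frac1m\bigl(\frac{k^2}{n}\bigr)\bigl(\frac{k}{n}\bigr)^{m-2}$ correctly uses the hypothesis $k\leq\sqrt n$ at exactly the place it is needed ($k^2/n\leq 1$), with the remaining geometric factor summable since $(k-1)/n<1/\sqrt n$ is bounded away from $1$. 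The Stirling route the paper alludes to would instead expand $\log n!-\log(n-k)!$ and the term $(n-k)\log(1-k/n)$, which requires slightly more bookkeeping of the $\frac12\log$ and $O(1/n)$ correction terms; your telescoped-product version buys a cleaner and fully elementary argument. The only cosmetic caveat is the degenerate range of very small $n$ (e.g.\ $n\leq 3$ forces $k=1$, where the product is empty of nontrivial factors and the claim is trivial), which your argument handles automatically.
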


The next shows the asymptotic negligibility of a particular series which arises in deriving results about the global optima via the second moment method.  

\begin{lem}
\label{lem:varestimate}
Let $N={n\choose k}^2$ and let $a_{N}$ and $b_{N}$ be the centering and scaling constants in \eqref{eqn:an-def} 
and \eqref{eqn:bn-def}. Define $u_{N}=b_{N} - x_n / a_{N}$ where $-K \leq x_{n} \ll a_N^2$ for some constant 
$K \geq 1$. 
\begin{enumeratea}
\item\label{item:vara} 
There exists a constant $c > 0$ depending on $K$ such that for $k \leq c \log n / \log\log n$
\vskip-.05in
\begin{align*}
\sum_{\substack{1\leq s,t \leq k \\ st\neq k^2}}
{k\choose s}{k\choose t}{n-k\choose k-s}{n-k\choose k-t} {n\choose k}^{-2} \sqrt{\frac{k^2+st}{k^2-st}}\cdot  e^{stu_N^2/(k^2+st)} \to 0
\end{align*}
\vskip.06in
\noindent
as $n$ tends to infinity.

\vskip.1in

\item \label{item:varb}The same result holds if $\log k \ll \log n$ and $ k(\log\log n)^2/\log n\ll x_{n}\ll a_N^2$. 
\end{enumeratea}
\end{lem}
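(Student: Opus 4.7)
The plan is to bound each summand individually by a suitable negative power of $n$ using a key algebraic identity, then sum over the at most $k^{2}=n^{o(1)}$ pairs $(s,t)$ with $st\neq k^{2}$. First I would apply Stirling (Lemma~\ref{lem:comb}) to show that, for $k\leq \sqrt n$ and uniformly in $s\in[0,k]$,
\[
\binom{k}{s}\binom{n-k}{k-s}\binom{n}{k}^{-1} = \binom{k}{s}^{2}\, s!\, n^{-s}\, e^{O(k^{2}/n)},
\]
so that $A(s,t):=\binom{k}{s}\binom{k}{t}\binom{n-k}{k-s}\binom{n-k}{k-t}\binom{n}{k}^{-2}$ factorizes accordingly. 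Next, using $u_{N}=b_{N}-x_{n}/a_{N}$, the definitions of $a_{N},b_{N}$, and $\log\binom{n}{k}=k\log n-\log k!+O(k^{2}/n)$, I would derive, with $\gamma(s,t):=4st/(k^{2}+st)$,
\[
\frac{st\,u_{N}^{2}}{k^{2}+st} = \gamma\bigl(k\log n-\log k!\bigr) - \frac{2st\,x_{n}}{k^{2}+st} + O(\log\log n).
\]

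Combining the combinatorial and exponential pieces, the logarithm of a single summand is bounded above by
\[
-E\log n + \beta\log k! + 2\log\!\bigl[\tbinom{k}{s}\tbinom{k}{t}\bigr] - \frac{2st\,x_{n}}{k^{2}+st} + O(\log\log n),
\]
where I have used $\log(s!\,t!)-\gamma\log k!\leq\beta\log k!$ (via $\log(s!\,t!)\leq 2\log k!$). The coefficient of $\log n$ is identified as $-E$ by the \emph{key algebraic identity}
\[
(s+t)(k^{2}+st) - 4stk = s(k-t)^{2} + t(k-s)^{2},
\]
so $E(s,t,k):=(s(k-t)^{2}+t(k-s)^{2})/(k^{2}+st)>0$ for $(s,t)\neq(k,k)$; also $\beta(s,t,k):=2(k^{2}-st)/(k^{2}+st)\in[0,2]$. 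An elementary case analysis (splitting into $s=k$ or $t=k$ versus $s,t\leq k-1$) establishes the uniform bounds $E\geq 1/(2k-1)$ and $\beta\leq 3E$; both extrema occur near the diagonal $(s,t)=(k-1,k)$ or $(k-1,k-1)$.

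For part~(a), with $k\leq c\log n/\log\log n$ and $c$ sufficiently small, $\log k!\leq k\log k\leq 2c\log n$, so $\beta\log k!\leq 6c\,E\log n$, absorbed into $-E\log n$ for $c<1/12$. The combinatorial factor $2\log\binom{k}{s}\binom{k}{t}\leq 2((k-s)+(k-t))\log k$ scales with the same $(s,t)$-dependent factor as $E$ near the diagonal (via $\binom{k}{s}\leq k^{k-s}/(k-s)!$), so it too is dominated by $E\log n$; the remaining terms $(1/2)\log[(k^{2}+st)/(k^{2}-st)]=O(\log k)$, bounded $x_{n}$, and $O(\log\log n)$ are negligible against $E\log n\geq\log n/(2k)\to\infty$. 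Summing over at most $k^{2}$ pairs preserves the decay. For part~(b), the stronger hypothesis $\log k\ll\log n$ and $x_{n}\gg k(\log\log n)^{2}/\log n$ activates the $-2stx_{n}/(k^{2}+st)$ contribution: near the diagonal it is $\approx -2x_{n}$, dominating $\beta\log k!\sim 2\log k$ since the calibrated hypothesis yields $x_{n}\gg\log k$ in this regime, while away from the diagonal $-E\log n$ remains dominant.

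The main obstacle lies in the near-diagonal regime $(s,t)\to(k,k)$, where $E$ is only of order $1/k$ and the Stirling cancellation between $+2\log k!$ in the combinatorial prefactor and $-\gamma\log k!$ in the exponential expansion is essential to reduce the coefficient of $\log k!$ from a potentially uncontrolled $O(1)$ to $\beta\leq 3E$. Without this cancellation, the polynomial-in-$k$ prefactors from the combinatorial counting would overwhelm the $-E\log n$ decay, and neither the restriction $k\leq c\log n/\log\log n$ in part~(a) nor the growth condition on $x_{n}$ in part~(b) would suffice.
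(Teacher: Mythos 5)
Your strategy—bounding each $(s,t)$ term separately via the identity $(s+t)(k^2+st)-4stk=s(k-t)^2+t(k-s)^2$—is genuinely different from the paper's proof, which first collapses the double sum to a single sum over $l=s+t$ using the Vandermonde identity and entropy bounds, and then compares the entropy function $g(l/2k)$ against the Gaussian exponent through the convexity of an auxiliary function $\psi$. For part~(a) your route works: when $k\leq c\log n/\log\log n$ one has $k\log k\leq c\log n$, so the losses incurred by the crude bounds $\log(s!\,t!)\leq 2\log k!$ and $2\log\bigl[\binom{k}{s}\binom{k}{t}\bigr]\leq 2\bigl((k-s)+(k-t)\bigr)\log k$ are uniformly $O(c)\cdot E\log n$, and the term-by-term argument closes for $c$ small. (Two small slips: the error in your expansion of $st\,u_N^2/(k^2+st)$ is $O(\log k+\log\log n)$, not $O(\log\log n)$, and the $x_n$-contribution near the diagonal is $\approx -x_n$, not $-2x_n$, since $2st/(k^2+st)\to 1$.)

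Part~(b), however, has a genuine gap, and it sits precisely in the step $\log(s!\,t!)-\gamma\log k!\leq\beta\log k!$. Under the part~(b) hypothesis $k$ may be as large as $e^{o(\log n)}$, so $k\log k\ggg\log n$ is allowed, and then $\beta\log k!$ is not controlled by either $E\log n$ or $x_n$. Concretely, take $k=e^{\sqrt{\log n}}$ and $s=t=1$: the true summand is of order $k^4n^{-2+4/k}\to 0$, but your upper bound on its logarithm is $-E\log n+\beta\log k!+\cdots\approx -2\log n+2k\log k\to+\infty$, because $\log(s!\,t!)=0$ while you replace it by $2\log k!$. The same failure occurs throughout the intermediate range: writing $m=(k-s)+(k-t)$, your positive terms total $\approx 3m\log k$ while $E\log n\approx m^2\log n/(4k)$ and the $x_n$-term contributes only $\approx -x_n$; for $2\leq m\ll k\log k/\log n$ (a nonempty range exactly when $k\log k\gg\log n$) neither negative term dominates, since the hypothesis only guarantees $x_n\gg k(\log\log n)^2/\log n$, which is $\ll m\log k$ at the crossover $m\asymp k\log k/\log n$ whenever $\log k\gg\log\log n$.

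The fix is not cosmetic: one must retain the cancellation $\log(s!\,t!)-\gamma\log k!\approx \beta\log k!-\log\bigl[(k)_{k-s}(k)_{k-t}\bigr]$, which is $O(m^2\log k/k)-m$ rather than $m\log k$, and simultaneously replace $2m\log k$ by the entropy bound $\approx 2m\log(ek/m)$ for the binomial factors. After doing so, the surviving positive contribution near the diagonal is of order $m\log\log n$ at $m\asymp k\log\log n/\log n$, i.e.\ of order $k(\log\log n)^2/\log n$—this is exactly what the hypothesis on $x_n$ is calibrated to beat, and it is the content of the paper's split of the sum at $l=2ks_n(k)$ with $s_n(k)=1-\log\log n/(2c\log n)$. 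Your two-case dichotomy (``near diagonal'' with $m=O(1)$ versus ``away from diagonal'' with $m\asymp k$) misses this intermediate regime, which is where the real work of part~(b) lies.
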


\begin{proof}[Proof of Lemma~\ref{lem:varestimate}]
We begin by establishing part (\ref{item:vara}) of the lemma;  a subsequent refinement yields part (\ref{item:varb}). 
Throughout the analysis we assume that $\log k= o(\log n)$, and therefore $k \ll \sqrt{n}$.
To begin, note that 
$
(k^2+st)/(k^2-st)\leq k
$
for all integers $s,t$ with  $2\leq s+t\leq 2k-1$. Thus we need to show that
\begin{align*}
I_{n}:=\sqrt{k} \sum_{\substack{1\leq s,t \leq k \\ st \neq k^2}}
{k\choose s}{k\choose t}{n-k\choose k-s}{n-k\choose k-t}  {n\choose k}^{-2} 
\exp\left\{\frac{st \, u_N^2}{(k^2 + st)} \right\} \to 0
\end{align*}
as $n$ tends to infinity.   Note that
\begin{align*}
{n-k\choose k-s} {n\choose k}^{-1} 
&= \, \frac{(k)_s (n)_{2k-s}}{(n)_k^2}
\end{align*}
and therefore by Lemma~\ref{lem:comb}, 
\begin{equation}
\label{eqn:comb1}
{n-k \choose k-s}{n-k \choose k-t}  {n \choose k}^{-2}  
\leq \, \frac{c \, (k)_s (k)_t}{n^{s+t}}
\end{equation}
for some universal constant $c>0$. Using Stirling's formula, we find
\begin{align*}
(k)_{s}(k)_{t} 
& \, \leq \, k \, (k/e)^{s+t} \, e^{-k (f(s/k) + f(t/k))}
\end{align*}
where $f(x) = -(1-x) \log(1- x)$.  As $f(x)$ is concave, the last inequality yields 
\begin{equation}
\label{eqn:comb2}
(k)_{s}(k)_{t} \, \leq \, k \, (k/e)^{s+t}e^{2kf((s+t)/2k)}.
\end{equation}
The elementary relation $4st \leq (s+t)^2$ implies that
\begin{equation}
\label{eqn:comb3}
\frac{st}{k^2+st} \leq \frac{(s+t)^2}{4k^2+(s+t)^2}.
\end{equation}
Combining (\ref{eqn:comb1}) - (\ref{eqn:comb3}) we find that 
\begin{align*}
I_{n}\ \leq \ 
& ck^{3/2} \sum_{\substack{1\leq s,t \leq k\\ st\neq k^2}}
{k\choose s}{k\choose t} \left(\frac{k}{en}\right)^{s+t} \exp\left(2kf\left(\frac{s+t}{2k}\right)
+ \frac{u_N^2(s+t)^2}{4k^2+(s+t)^2}\right).
\end{align*}
The last two terms in each summand depend on $s,t$ only through their sum; 
we decompose the outer sum accordingly.  Note that
for $0 \leq l \leq k$, 
\[
\sum_{s=0}^{l} {k\choose s}{k\choose l-s} = {2k\choose l} 
\, \leq \, 
\exp\{ 2 k \, [ f(l/2k) + f(1 - l/2k) ] \}
\]
where the inequality follows from a standard entropy bound for the binomial coefficient.
Therefore
\begin{eqnarray}
I_{n}
& \leq & 
c k^{3/2} \sum_{l=2}^{2k-1}  \left(\frac{k}{en}\right)^l \exp\left(2k \, g\Big(\frac{l}{2k}\Big)
    + \frac{u_N^2 l^2}{4k^2+l^2} \right) \nonumber \\[.1in]
\label{inq:I_n}
& = & c \, k^{3/2} \sum_{l=2}^{2k-1}  \left(\frac{k \, e^{u_N^2/4k}}{en}\right)^l 
\exp \left(2k \, g\Big(\frac{l}{2k}\Big) - \frac{u_N^2 l (2k-l)^2}{4k(4k^2+l^2)}\right)
\end{eqnarray}
\vskip.06in
\noindent
where $g(x)=-x\log x-2(1-x)\log(1-x)$.

\vskip.1in

Consider the first term in each summands in (\ref{inq:I_n}).  
The definition of $u_{N}$ ensures that 
\[
e^{u_N^2 / 2} = \frac{ N e^{-x_n r_n + o(1)} }{ \sqrt{2 \pi} \, a_N}
\]
where $r_n = 1 + x_n/2 a_N^2 \to 1$. 
Moreover, Stirling's approximation and our assumption that $k \ll \sqrt{n}$ imply that 
$N = (1 + o(1)) (en/k)^{2k} / 2 \pi k$, and therefore
\[
e^{u_N^2/4k} \leq \frac{en}{k} \left(\frac{c_{0} \, e^{-x_nr_n}}{\sqrt{k^3\log n}}\right)^{1/2k} 
\]
for some universal constant $c_0 > 0$.  Plugging this expression into inequality (\ref{inq:I_n})
yields
\begin{align*}
I_{n}
& \leq c_1 \, k^{3/2} \sum_{l=2}^{2k-1} \exp\left( 2k \, g\Big(\frac{l}{2k}\Big)
- \frac{u_N^2 \, l (2k-l)^2}{4k(4k^2+l^2)}\right) \left(\frac{e^{-x_nr_n}}{\sqrt{k^3\log n}}\right)^{l/2k}\\
& \leq c_1 \, k^{3/2} \sum_{l=2}^{2k-1} \exp\left( 2k \, g\Big(\frac{l}{2k}\Big)-\frac{u_N^2 \, l}{4k} 
\frac{(1-{l}/{2k})^2}{(1+(l/2k)^2)}\right) \left(\frac{e^{-x_nr_n}}{\sqrt{k^3\log n}}\right)^{l/2k}.
\end{align*}
 
Now consider the function
\[
\psi(x) := \frac{2g(x)}{x(1-x)^2(1+x^2)^{-1}} \ \ \ \ x \in (0,1) .
\]
It is easy to see that $\psi$ is positive and diverges to infinity as $x \to 0$ or $1$.  We claim that 
$\psi$ is convex. To see this, note that $\psi$ can be expressed as the sum of
\[
\psi_1(x) = \frac{-x \log x - x(1-x)}{x(1-x)^2(1+x^2)^{-1}} 
\ \mbox{ and } \  
\psi_2(x) = \frac{-2 (1-x) \log (1-x) + x(1-x)}{x(1-x)^2(1+x^2)^{-1}} 
\]
Taking the Taylor series expansion of 
$\psi_1$ around $1$, and $\psi_2$ around $0$, we find that the resulting power series have
non-negative coefficients for terms of degree $2$ or higher.  Thus $\psi_1$ and $\psi_2$ are
convex, and $\psi$ is convex as well.
Now note that for $x \in [1/k, 3/4]$ we have $\psi(x) \leq c \log k$ for some constant $c>0$.  
Moreover, $c \log k \leq u_N^2 / 4k$ under our assumption that $\log k = o(\log n)$. 
It follows that for $2 \leq l \leq 3k / 2$
\[
2 k g\Big( \frac{l}{2k} \Big) \ \leq \ l \frac{(1-l/2k)^{2}}{(1+l^2/4k^2) } \cdot \frac{u_{N}^{2}}{8k},
\]
and therefore
\begin{align*}
&k^{3/2} \sum_{l=2}^{3k/2} 
\exp\left( 2k \, g\Big(\frac{l}{2k}\Big) - 
\frac{u_N^2}{2} \frac{l}{2k}\frac{(1-{l}/{2k})^2}{(1+(l/2k)^2)}\right) \left(\frac{e^{-x_nr_n}}{\sqrt{k^3\log n}}\right)^{l/2k} \\[.1in]
&\leq k^{3/2} \sum_{l=2}^{3k/2} \exp\left(-\frac{u_N^2 l}{8k} \frac{(1-{l}/{2k})^2}{(1+(l/2k)^2)}\right) \left(\frac{e^{-x_nr_n}}{\sqrt{k^3\log n}}\right)^{l/2k}\\[.1in]
&\leq k^{3/2} \sum_{l=2}^{3k/2} \exp\left(-\frac{u_N^2}{2^8k} \cdot l\right) 
\ \leq \ \frac{k^{3/2} \exp(-{u_N^2}/{2^7k})  }{1- \exp(-{u_N^2}/{2^8k}) } .
\end{align*}
In the second inequality we dropped the final term in each summand, which is less than one.
As $u_{N}^{2}/k \approx \log n$, the final term above tends to zero as $n$ tends to infinity. 

We now consider the remaining terms in the sum $I_n$.
For $x \in [3/4,1-1/2k]$ the function $\psi(x) \leq c' k\log k$.  Moreover,
if $k \leq c \log n / \log\log n$ for a sufficiently small constant $c > 0$,
then $c' k \log k \leq u_N^2 / 4k$. In this case, arguments like those above show that
\begin{align*}
&k^{3/2} \sum_{l=3k/2}^{2k-1} \exp\left(2kg\left(\frac{l}{2k}\right)-\frac{u_N^2}{2(1+(l/2k)^2)} \frac{l}{2k} \left(1-\frac{l}{2k}\right)^2\right) \left(\frac{e^{-x_nr_n}}{\sqrt{k^3\log n}}\right)^{l/2k}\\[.1in]
&\leq k^{3/2} \sum_{3k/2}^{2k-1} \exp\left(-\frac{u_N^2}{2^6k^2} (2k-l)^2\right) \frac{1}{(k^3\log n)^{3/8}}\leq \frac{k^{3/8}}{(\log n)^{3/8}}\sum_{l=1}^{k/2} \exp\left(-\frac{u_N^2}{2^6k^2} l^2\right)\to 0 
\end{align*}
as $\log n/k, u_{N}^2/k^2\to \infty$ as $n\to\infty$.  This completes the proof of part (\ref{item:vara})
 
Suppose now that $c \log n / \log\log n \leq k$ and that 
$k(\log\log n)^2/\log n \ll x_n \ll a_N^2$.  In this case we need to break the 
final part of the sum defining $I_n$ into two parts: from $3k/2$ to $2k s_n(k)$, and from 
$2k s_n(k)$ to $2k-1$, where
$s_n(k) = 1 - \log \log n/(2c\log n)$.
For $x \in [3/4, s_n(k)]$ the function $\psi(x) \leq c'\log n \leq u_N^2/4k$.  It follows that
\begin{align*}
&k^{3/2} \sum_{l=3k/2}^{2k s_n(k)} \exp\left(2kg\left(\frac{l}{2k}\right)-\frac{u_N^2}{2(1+(l/2k)^2)} \frac{l}{2k} \left(1-\frac{l}{2k}\right)^2\right) \left(\frac{e^{-x_nr_n}}{\sqrt{k^3\log n}}\right)^{l/2k}\\[.1in]
& \leq 
k^{3/2} \sum_{3k/2}^{2k s_n(k)} 
\exp\left(-\frac{u_N^2}{2^6k^2} (2k-l)^2\right) \frac{e^{-3x_n/4}}{(k^3\log n)^{3/8}}\\[.1in]
& \leq 
\frac{k^{3/8}e^{-3x_n/4}}{(\log n)^{3/8}}\sum_{l=k\log\log n/c\log n}^{k/2} 
\exp\left(-\frac{u_N^2}{2^6k^2} l^2\right) \leq 
\frac{k^{11/8}e^{-3x_n/4}}{(\log n)^{3/8}}\exp\left(-\frac{c'k (\log\log n)^2}{\log n} \right)
\end{align*}
Our assumptions on $k$ and $x$ ensure that the last term tends to zero with increasing $n$.
For the remainder of the sum, note that
 \begin{align*}
& k^{3/2} \sum_{l = 2k s_n(k) }^{2k-1} \exp\left(2kg\left(\frac{l}{2k}\right)-
\frac{u_N^2}{2(1+(l/2k)^2)} \frac{l}{2k} \left(1-\frac{l}{2k}\right)^2\right) 
\left(\frac{e^{-x_n r_n}}{\sqrt{k^3\log n}}\right)^{l/2k}\\[.1in]
& \leq k^{5/2} \exp\{ 2k g( s_n(k) ) \} {e^{-x_n(1+o(1))}}(k^3\log n)^{-1/2+o(1)}\\[.1in]
& \leq k^{5/2} \exp\left(c'k(\log\log n)^2/\log n - x_n(1+o(1)) \right) (k^3\log n)^{-1/2+o(1)} .
\end{align*}
Our assumptions on $k$ and $x_{n}$ ensure that the final term tends to zero with increasing $n$.  This completes the proof.
\end{proof}

\section{Proofs for Global Maxima}
\label{sec:proofs-gopt}

\begin{proof}[Proof of Theorem~\ref{thm:globalmax}:] 
We begin with the proof of part~\eqref{item:maxa}. Recall that $N:=|\sS_n(k)|= {n\choose k}^2$.  For fixed $x \in \dR$,
\[
\pr( a_N (k M_n(k) - b_N) \leq x ) = \pr \left( \max_{\gl \in \sS_{n}(k)} k \avg(\vW_\gl) \leq u_N \right)
\]
where $u_{N} = b_{N} + a_{N}^{-1} x$.
Note that $k \avg(\vW_\gl) \sim \text{N}(0,1)$ for all $\gl \in \sS_{n}(k)$, and therefore the second term above concerns
the maximum of $N$ correlated standard Gaussians.  It follows from Lemma \ref{lem:extreme} that if $Z_1, Z_2, \ldots$ 
are independent $\text{N}(0,1)$ then 
\[
\pr \left( \max_{1 \leq i \leq N} Z_i  \leq u_N \right) \to \pr( - \log T \leq x) 
\]
as $N$ tends to infinity, where $T \sim \text{Exp}(1)$. Thus it suffices to show that
\[
\left| \pr\left( \max_{\gl \in \sS_{n}(k)} k \avg(\vW_\gl) \leq u_N \right) 
- \pr\left( \max_{1 \leq i \leq N} Z_i  \leq u_N \right) \right| 
\]
tends to zero as $n$ (and therefore $N$) tends to infinity.  By Lemma~\ref{lem:compare}, the absolute
value above is at most
\begin{equation}
\label{cmpbnd}
\sum_{\lambda \neq \lambda', \, \gs(\lambda, \lambda') \neq 0}  
2 \sqrt{\frac{1 + \gs(\lambda, \lambda')}{1 - \gs(\lambda, \lambda')}} \cdot 
\bar{\Phi}(u_N) ^2 \cdot e^{u_N^2 \gs(\lambda, \lambda') / (1+\gs(\lambda, \lambda') )} \\
\end{equation}
where the sum is over index sets $\lambda, \lambda' \in \sS_{n}(k)$, and
\[
\gs(\lambda, \lambda')
\ = \ 
\cov(k\avg(\vW_\gl),k\avg(\vW_{\gl'}))
\ = \ 
st \, k^{-2}
\]
if $\gl$ and $\gl'$ share $s$ rows and $t$ columns.  By a straightforward combinatorial argument, the expression
(\ref{cmpbnd}) reduces to
\begin{align*}
N^2\bar{\Phi}(u_N)^2\sum_{\substack{1\leq s,t\leq k\\ st\neq k^2}}{k\choose s}{k\choose t}{n-k\choose k-s}{n-k\choose k-t} {n\choose k}^{-2} \sqrt{\frac{k^2+st}{k^2-st}}\cdot  e^{stu_N^2/(k^2+st)}
\end{align*}
It is easy to check that $N\bar{\Phi}(u_N)\to e^{-x}$ as $N\to\infty$, and the sum tends to zero by 
Lemma~\ref{lem:varestimate}\eqref{item:vara}. 

Part \eqref{item:maxb} of the theorem follows in a similar fashion, using Slepian's lemma and 
Lemma~\ref{lem:varestimate}\eqref{item:varb}. We omit the details.

We now turn to the proof of part \eqref{item:maxc}. Fix $k \geq 1$ and $x \in \vR$.  It follows from 
part \eqref{item:maxa} of the theorem that 
\[
\pr(a_{N}( k M_{n}(k) - b_{N} ) \geq x) \to \pr( -\log T \geq x)
\] 
as $n$ and $N={n\choose k}^2$ tend to infinity, where $T$ is an exponential rate one random variable.
Given a $k \times k$ matrix $\vU$ let $\hat{\vU}$ denote the centered matrix 
$\vU - \avg(\vU) \vone\vone^{\prime}$.  Define 
\[
x_{n}=(b_{N}+x/a_{N})/k.
\]
Let $S \in \cB(\dR^{k \times k})$ be a measurable set of $k \times k$ submatrices.  It suffices to show
that 
\begin{equation}
\label{deltan}
\Delta_n := \pr( M_n(k) \geq x_n, \hat{\vW}_{\gl_n(k)} \in S) 
\ - \ \pr(M_n(k) \geq x_n) \pr(\hat{\vW}^k \in S)
\ \to \ 0 .
\end{equation}
as $n$ tends to infinity.   To see this, note that if $\vZ$ is a $k \times k$ Gaussian random matrix
independent of $\vW$, then
\[
\pr(\hat{\vW}_{\gl_n(k)} \in S) - \pr( \hat{\vZ} \in S) 
\ \leq \ 
2 \pr( M_n(k) < x_n) + | \Delta_n | .
\]
The first term on the right can be made arbitrarily small by choosing $x$ large and negative.
We now turn our attention to (\ref{deltan}).  To begin, note that by symmetry 
\begin{align*}
p:=  \ \pr(M_n(k) \geq x_n, \hat{\vW}_{\gl_n(k)} \in S) 
& =  \sum_{\gc \in \sS_{n}(k)} \pr(\gl^*(k)=\gc, \, \avg(\vW_{\gc}) \geq x_n, \hat{\vW}_\gc \in S) \\[.1in]
&=\  N \cdot \pr(\gl_n(k) = \gc_0, \, \avg(\vW^k) \geq x_n, \hat{\vW}^k \in S)
\end{align*}
where $\vW^k$ is the upper left corner submatrix, with index set $\gc_0 = [k] \times [k]$.  Let
\[
\sE_0 : =\{\gc \in \sS_{n}(k) : \gc \cap \gc_0 = \emptyset\}
\]
be the index sets of $k \times k$ submatrices that do not overlap $\vW^k$, and let
\[
\sN_0 : = \{\gc \in \sS_{n}(k) : \gc \cap \gc_0 \neq \emptyset \mbox{ and } \gc \neq \gc_0  \} 
\]
Thus $\sS_n (k) = \sE_0 \cup \sN_0 \cup \{ \gc_0 \}$.  Define events $A, B, C,  D$ and $E$ as follows:
\[
A = \left\{ \max_{\gc\in \sE_0} \avg(\vW_{\gc}) < \avg(\vW_{\gl}) \right\}, \ \ \ \ 
B = \left\{ \max_{\gc\in \sN_0} \avg(\vW_{\gc}) < \avg(\vW_{\gl}) \right\}
\]

\[
C = \{ \avg(\vW^k) \geq x_n \},
\ \ \ \ 
D = \{ \hat{\vW}^k \in S \},
\ \ \ \ 
E = \{ M_n(k) \geq x_n \}. \\
\]
\vskip.1in
\noindent
Note that
$
\{\gl_n(k) = \gc_0 \} = A \cap B
$.
Moreover, as $\avg(\vW^k)$, $\hat{\vW}^k$, and $\max_{\gc \in \sE_0} \avg(\vW_{\gc})$ are independent, 
\begin{align*}
N^{-1}p \ 
& = \  \pr(A \cap B \cap C \cap D) \\
& =  \ \pr(A \cap C \cap D) - \pr(A \cap B^c \cap C \cap D) \\
& =  \ \pr(A \cap C) \pr(D) - \pr(A \cap B^c \cap C \cap D) ,
\end{align*}
and therefore
\begin{equation}
\label{dac}
\big| N^{-1}p - \pr(D) \pr(A \cap C) \big| \ \leq \ \pr(B^c \cap C) .
\end{equation}
Setting $S = \dR^{k\times k}$ the last inequality yields
\begin{equation}
\label{eac}
\big| N^{-1} P(E) - \pr(A \cap C) \big| \ \leq \ \pr(B^c \cap C) 
\end{equation}
Using (\ref{dac}) and (\ref{eac}) we obtain the bound
\begin{eqnarray*}
| \Delta_n | 
& = & 
\big| \, p - \pr(D) \pr(E) \big| \\[.1in]
& \leq &
N \big| \, N^{-1}  p - \pr(A \cap C) \pr(D) \big| \ + \ N \pr(D) \big| N^{-1} \pr(E) - \pr(A \cap C) \big| \\[.1in]
& \leq & 
2 N \pr(B^c \cap C) 
 =  
2 N \pr(C) \pr(B^c \, | \, C)  .
\end{eqnarray*}
Now $N \pr(C) = N \pr(\avg(\vW^k) \geq x_n) \to e^{-x}$, so it suffices to show that
\[
\pr(B^c \, | \, C) \ = \  
\pr\left( \max_{\gc \in \sN_0} \avg(\vW_{\gc}) \geq \avg(\vW^k) \, \big| \, \avg(\vW^k) \geq x_n \right) 
\ \to \ 0
\]
as $n$ tends to infinity.  For $1\leq s,t \leq k$ define
\[
\sN_0 (s,t) \ = \ \{ \gc \in \sN_0 : | \gc \cap \gc_0 | = (s,t) \}
\]
It follows from the union bound that
\begin{align*}
& \pr\left( \max_{\gc \in \sN_0} \avg(\vW_{\gc}) \geq \avg(\vW^k) \, \big| \, \avg(\vW^k) \geq x_n \right) \ \leq \\[.05in]
& \sum_{1 \leq s,t \leq k, st \neq k^2} 
\pr\left( \max_{\gc \in \sN_0 (s,t)} \avg(\vW_{\gc}) \geq \avg(\vW^k) \, \big| \, \avg(\vW^k) \geq x_n \right) 
\ =: \sum_{1 \leq s,t \leq k, st \neq k^2} p_n(s,t) 
\end{align*}
Fix $1 \leq s, t \leq k$ with $st \neq k^2$.  For each $\gc \in \sN_0 (s,t)$ let
\[
F(\vW_{\gc}) \ = \ \frac{1}{| \gamma \setminus \gamma_0 |} 
\sum_{(i,j) \in \gamma \setminus \gamma_0} W_{i,j}
\]
be the average of the entries of $\vW_\gc$ that lie outside $\vW^k$, and note that 
$| \gamma \setminus \gamma_0 | = k^2 - st$.  A straightforward argument shows that
\[
\max_{\gc \in \sN_0 (s,t)} \avg(\vW_{\gc}) \ \geq \ \avg(\vW^k)
\]
implies
\[
\max_{\gc \in \sN_0 (s,t)} F(\vW_{\gc}) \ \geq \ \avg(\vW^k)  - (k^2 - st)^{-1} st  \max_{i,j \in [k]} \hat{W}_{ij}
\]
where $\hat{W}_{ij}$ are the entries of $\vW^k$.  As $\vW^k$ is independent of $\avg(\vW^k)$ and
$F(\vW_{\gc})$, the last relation implies that
\begin{align*}
p_n(s,t) \ \leq \ \pr\left( \max_{\gc \in \sN_0 (s,t)} F(\vW_{\gc}) \geq x_n  - (k^2 - st)^{-1} st \max_{i,j \in [k]} \hat{W}_{ij} \right)
\end{align*}
By Slepian's lemma, for each $x \in \dR$,
\[
\pr\left( \max_{\gc\in \sN_\gl(s,t)} F(\vW_{\gc}) \geq x \right) \leq \pr\Big( V_{|\sN_0 (s,t)|} \geq x \sqrt{k^2-st} \Big)
\]
where $V_{n}$ denotes the maximum of $n$ independent standard Gaussians. Therefore
\begin{align}
p_n(s,t) & 
\leq \ \pr\left( V_{|\sN_0 (s,t)|} \geq x_n \sqrt{k^2-st}  - (k^2-st)^{-1/2}st \max_{i,j\in [k]} \hat{W}_{ij} \right) \notag\\[.1in]
& \leq \ |\sN_\gl(s,t)| \, \pr\left( V_{1} \geq x_n \sqrt{k^2-st}  - (k^2-st)^{-1/2}st \max_{i,j \in [k]} \hat{W}_{ij} \right).
\label{eq:tozero}
\end{align}  
Now note that
\[
|\sN_{\gl}(s,t)|={k\choose s}{k\choose t}{n-k\choose k-s}{n-k\choose k-t}=O(n^{2k-s-t})
\] 
and that 
\[
x_n\sqrt{k^2-st} \approx \sqrt{(2k-2st/k)2\log n}
\]
An elementary argument shows that $2st/k \leq (s+t)^2/2k \leq (1-1/2k)(s+t)\leq s+t-1/k$.  Therefore
the probability in \eqref{eq:tozero} converges to zero as $n$ tends to infinity, and this completes the proof. 
\end{proof}

\subsection{Two point localization}

Fix $\gt> 0$ and recall the definition of $\tilde{k}$ and $k^*$ from \eqref{eqn:tildk-def}.  
For $1 \leq m \leq n$ let $N_n(m)$ denote the number of $m \times m$ sub-matrices 
with average greater than $\gt$, 
\[
N_n(m) := \sum_{\gl\in \sS_n(m)} \ind\set{\avg(\vW_{\gl})> \gt}.
\]
Note that if there is an $m \times m$ submatrix with average greather than $\gt$, then there exists 
an $(m-1) \times (m-1)$ submatrix with average greater than $\gt$. 
Thus Theorem \ref{thm:loc} is a corollary of the following Proposition~\ref{prop:first-second-mom}.

\begin{prop} \label{prop:first-second-mom}
Let $\gt>0$ be fixed. 
\begin{enumeratei}
\item\label{ia} 
$\pr(N_n(k_n^* + 1) > 0) \to 0$ as $n \to \infty$.
 
\vskip.1in 
 
\item\label{ib} 
$\pr(N_n(k_n^* - 1) > 0) \to 1$ as  $n \to \infty$.
\end{enumeratei}
\end{prop}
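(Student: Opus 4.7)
The plan is to establish \eqref{ia} via Markov's inequality and \eqref{ib} via the second-moment (Paley--Zygmund) inequality, with Lemma~\ref{lem:varestimate}\eqref{item:varb} providing the crucial bound on cross terms.

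Set $f(m) := \E N_n(m) = {n \choose m}^2 \bar\Phi(m\tau)$, so that $f(\tilde k_n) = 1$ by \eqref{eqn:tildk-def}. Using Stirling's formula for ${n\choose m}$ and the tail estimate $\bar\Phi(x) = (1+o(1)) \phi(x)/x$, the discrete log-derivative
\[
\log\frac{f(m+1)}{f(m)} \; = \; 2\log\frac{n-m}{m+1} - m\tau^2 + O(1)
\]
evaluates to $-2\log n + O(\log\log n)$ for $m$ in an $O(1)$-neighborhood of $\tilde k_n$ by \eqref{eqn:tildk-asymp}. Since $|k_n^* - \tilde k_n| \leq \tfrac12$, summing this yields $f(k_n^*+1) = n^{-1+o(1)} \to 0$ and $f(k_n^*-1) = n^{1-o(1)} \to \infty$. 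The first combined with Markov's inequality proves \eqref{ia}; the second supplies the lower bound $\E N_n(k_n^*-1) \to \infty$ needed below.

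For \eqref{ib}, set $m := k_n^*-1$; Paley--Zygmund gives $\pr(N_n(m) > 0) \geq (\E N_n(m))^2 / \E(N_n(m)^2)$, so it suffices to show $\E(N_n(m)^2) = (1+o(1))(\E N_n(m))^2$. Expanding the second moment, the diagonal contribution equals $\E N_n(m)$, which is of lower order since $\E N_n(m) \to \infty$. Partition the remaining ordered pairs $(\lambda,\gamma)$ by the overlap $|\lambda\cap\gamma| = (s,t)$; the pair $(\avg(\vW_\lambda),\avg(\vW_\gamma))$ is bivariate Gaussian with correlation $\rho = st/m^2$, and Lemma~\ref{lem:biv-norm-tail} together with monotonicity of $x e^{x^2/2}\bar\Phi(x)$ yields, for $st < m^2$,
\[
\pr(\avg(\vW_\lambda) > \tau,\,\avg(\vW_\gamma) > \tau) \; \leq \; 2 \sqrt{\tfrac{m^2+st}{m^2-st}}\;\bar\Phi(m\tau)^2 \exp\!\left(\tfrac{st(m\tau)^2}{m^2+st}\right).
\]
Pairs with $st = 0$ contribute $q(2-q)$ to $\E(N_n(m)^2)/(\E N_n(m))^2$ via Vandermonde's identity, where $q = {n-m\choose m}/{n\choose m} = 1 + O(m^2/n) = 1 + o(1)$ since $m = O(\log n)$. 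Pairs with $s,t \geq 1$ and $st < m^2$ contribute at most a constant multiple of the sum bounded by Lemma~\ref{lem:varestimate}\eqref{item:varb} with $N = {n\choose m}^2$ and $u_N = m\tau$.

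To invoke that bound, write $m\tau = b_N - x_n/a_N$ and verify $m(\log\log n)^2/\log n \ll x_n \ll a_N^2$. This is the main delicate point, since $N = {n\choose m}^2$ itself varies with $m$: a careful asymptotic expansion using Stirling and the definition of $b_N$ in \eqref{eqn:bn-def} gives $b_N - m\tau = (\tilde k_n - m)\tau/2 + o(1)$, so $x_n \asymp \log n$, while $m \sim (4/\tau^2)\log n$ and $a_N^2 \sim (16/\tau^2)\log^2 n$ by \eqref{eqn:tildk-asymp}, so both conditions hold. Hence this contribution is $o(1)$ and $\E(N_n(m)^2) = (1+o(1))(\E N_n(m))^2$, establishing \eqref{ib}.
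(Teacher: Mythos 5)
Your proof is correct and follows essentially the same route as the paper's: the first-moment/Markov computation of $f_n$ near $\tilde k_n$ for part (i), and the second-moment method driven by Lemma~\ref{lem:biv-norm-tail} and Lemma~\ref{lem:varestimate}(b) for part (ii), with your explicit Vandermonde treatment of the disjoint-entry pairs playing the role of the paper's transitivity reduction to the sum $I_n$. (Your per-step decay $n^{-2+o(1)}$ and the value $b_N - m\tau \approx (\tilde k_n - m)\tau/2$ differ from the paper's stated $n^{-3c(1+o(1))}$ and $\approx \frac{3}{4}(\tilde k_n - k)\tau$, but either set of constants meets the hypotheses of Lemma~\ref{lem:varestimate}(b), so the conclusion is unaffected.)
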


\begin{proof}[Proof of Proposition~\ref{prop:first-second-mom}]
For $x\in [1,n]$ define the function 
\[
f_{n}(x) := {n \choose x}^2 \bar{\Phi}(x\gt) 
\]
It is easy to see that $\E(N_{n}(m))=f_{n}(m)$. 
It follows from Lemma~\ref{lem:gauss_tail} and Stirling's approximation
$
\Gamma(x+1)=\sqrt{2\pi}x^{x+1/2}e^{-x+ O(1/x)} \text{ for } x \geq 1
$
that for any $c \in \dR$ and $x$ such that $1 \leq x + c \leq n - 1$,
\begin{align*}
&\frac{f_n(x+c)}{f_n(x)} \\[.1in]
&= \left(\frac{\Gamma(x+1)\Gamma(n-x+1)}{\Gamma(x+c+1)\Gamma(n-x-c+1)}\right)^2 
\cdot \frac{\bar{\Phi}((x+c)\gt) }{\bar{\Phi}(x \gt) }\\[.1in]
&= \frac{x^{2x+2}(n-x)^{2n-2x+1}}{(x+c)^{2x+2c+2}(n-x-c)^{2n-2x-2c+1}}
\cdot \exp\left\{ - \frac{(2cx+c^2) \gt^2}{2} + O(1/x) \right\} \\[.1in]
&= \frac{(1-x/n)^{2c}}{(1+c/x)^{2x+2c+2}(1-c/(n-x))^{2n-2x-2c+1}}
\cdot \left(\frac{en}{x} e^{- x\gt^2}\right)^c 
\exp\left\{ \frac{-c - c^2 \gt^2}{2} + O(1/x) \right\}.
\end{align*}
Using the relation \eqref{eqn:tildk-asymp} we have $f_{n}(\tilde{k}_{n})=1$. Moreover one can easily check that
\[
\frac{en}{\tilde{k_n}} \exp(-\tilde{k}_n \gt^2/4)\to 1 \text{ as } n \to \infty. 
\]
Thus $f_n(\tilde{k}_{n}+c) =n^{-3c(1+o(1))}$ tends to $0$ or infinity when
$c$ is positive or negative, respectively.  In particular, 
$\E(N_{n}(k_n^*+1)) \to 0$ and $\E(N_{n}(k_n^*-1)) \to \infty$ as $n$ tends to infinity. Note that, the distance between $\tilde{k}_{n}$ and $k_n^*+1$ is more than $1/2$. Thus $\E(N_{n}(k_n^*+1)) \leq n^{-3/2+o(1)}$. Part \eqref{ia} now follows easily from the fact that $\pr(N_n(k_n^*+1)>0) \leq \E(N_{n}(k_n^*+1)) \leq n^{-3/2+o(1)}$. By Borel-Cantelli lemma we have $N_n(k_n^*+1)=0$ eventually a.s.

To prove \eqref{ib}, we will make use of the second moment method. To simplify notation, 
let $k = k_n^*-1$.  We have already proved that $\E(N_{n}(k))\ge n^{3/2+o(1)}\to\infty$ as $n\to\infty$.  
By a standard second moment argument,
\[
\pr(N_n(k) = 0) \ \leq \ \frac{\var(N_n(k))}{(\E N_n(k))^2}.
\]
To this end, note that the collection of random variables $\{ \avg(\vW_{\gl}): \gl \in \sS_n(k) \}$ is transitive,
in the sense that for any $\gl_0,\gl_1 \in \sS_n(k)$ there exists a permutation $\pi:\sS_n(k) \to \sS_n(k)$ 
such that $\pi(\gl_{0}) = \gl_{1}$ and
\[
\{ \avg(\vW_{\gl}): \gl\in \sS_n(k) \} \equald \{ \avg(\vW_{\pi(\gl)}): \gl \in \sS_n(k) \}.
\]
A simple calculation using this transitivity shows that, in order to prove the second assertion, 
it is enough to show that for some fixed $\gl_{0} \in \sS_n(k)$,
\begin{equation}\label{eqn:in-to-0}
I_n := \sum_{\gl\in \sS_n(k), \gl\neq \gl_0, \gl\cap \gl_0 \neq \emptyset} \frac{\pr(\avg(\vW_{\gl})> \tau\mid \avg(\vW_{\gl_0}) > \tau)}{\E(N_n(k))} \to 0
\end{equation}
as $n$ tends to infinity. Moreover, we have
\[
\pr(N_n(k) = 0) \ \leq \  I_n.
\]

Note that the vector 
$(k\avg(\vW_{\gl}), k\avg(\vW_{\gl_0}))$ has a bivariate normal distribution with variance 
one and correlation $st / k^2$, where $s$ is the number of rows shared by $\gl,\gl_{0}$, and 
$t$ is the number of common columns shared by $\gl,\gl_{0}$. 

For $1\leq s,t \leq k$ define the quantity
\[
E(s,t):= {n\choose k}^{-2} {k\choose s}{n-k\choose k-s} 
{k\choose t}{n-k\choose k-t} \frac{\pr( Z_{st} \geq k \gt \mid Z \geq k \gt )}{\bar{\Phi}(k \gt)}
\]
where $Z_{st}=k^{-2}stZ + \sqrt{1-k^{-4}s^2t^2}Z'$ and $Z,Z'$ are independent standard Gaussians.  Thus we have
\begin{align}\label{eq:In}
I_n:= \sum_{s=1}^{k}\sum_{t=1}^{k} E(s,t)
\end{align}
Clearly 
$
E(k,k)= 1/\E(N_{n}(k))\to 0 \text{ as } n\to\infty. 
$
We need to estimate $E(s,t)$ for $st\neq k^2$. 

Using Lemma~\ref{lem:biv-norm-tail} and Lemma~\ref{lem:gauss_tail} with $\theta_{st}:=\sqrt{\frac{k^2-st}{k^2+st}}$, we have
\begin{align*}
\frac{\pr(Z_{st}\geq k\gt\mid Z\geq k\gt)}{\bar{\Phi}(k\gt)}
\leq 
\frac{2\bar{\Phi}(\theta_{st} k\gt)}{\bar{\Phi}( k\gt)}
\leq 2\sqrt{\frac{k^2+st}{k^2-st}}\exp\left(\frac{stk^2 \gt^2}{k^2+st}\right)
\end{align*}
for $st\leq k(k-1)$ and $k\gt>1$. Now we use Lemma~\ref{lem:varestimate}(b) with $N={n\choose k}^2, x_{n}=(b_{N}-k\gt) a_{N}\approx \frac{3}{4}(\tilde{k}_{n}-k)k\gt^2$ to bound
\[
I_{n}\le 2\sum_{\substack{1\le s,t\le k\\ st\neq k^2}}  {k\choose s}{n-k\choose k-s} {k\choose t}{n-k\choose k-t} {n\choose k}^{-2}  \sqrt{\frac{k^2+st}{k^2-st}}\exp\left(\frac{stk^2 \gt^2}{k^2+st}\right) + \frac{1}{\E(N_n(k))}.
\]
Thus we have $I_n\to 0$ as $n\to\infty$ and we are done.
\end{proof}

\section{Structure Theorem for Local Optima}
\label{sec:proofs-lopt}

\begin{proof}[Proof of Theorem \ref{thm:st1}]
Let $\cR_{k}$ and $\cC_{k}$ be the events that the sub-matrix $\vW^k$ 
is row optimal and is column optimal, respectively. 
Clearly, $\pr(\cR_{k}) = \pr(\cC_{k}) = {n\choose k}^{-1}$.   We wish to find the probability of the event 
$\cI_{k,n} := \cR_{k} \cap \cC_{k}$.  To begin, we fix some notation.   Let $\vC = \vW^k$ and let 
$c_{\cdot\cdot}=k^{-2} \sum_{i,j=1}^k C_{ij}$ be the average of the entries of $\vC$.  For $1\leq i,j \leq n$
let $c_{i \cdot} = k^{-1} \sum_{j=1}^{k} W_{ij}$ be the 
average of the first $k$ entries in the $i$th row of $\vW$.  Define the column averages 
$c_{\cdot j} = k^{-1} \sum_{i=1}^{k} W_{ij}$ in a similar fashion.  Note that $c_{i \cdot}$, $c_{\cdot j}$
are defined for rows and columns outside $\vC$.  Letting 
$\tilde{c}_{ij}=c_{ij}-c_{i\cdot}-c_{\cdot j} + c_{\cdot\cdot}$, we may write each entry of $\vC$ in
terms of its ANOVA decomposition 
\[
C_{ij} = \tilde{c}_{ij} + (c_{i\cdot}-c_{\cdot\cdot}) + (c_{\cdot j} - c_{\cdot\cdot}) + c_{\cdot\cdot}
\]
In the Gaussian setting under study, the families of random variables 
\[
\tilde{\vC} = \{ \tilde{c}_{ij} : 1 \leq i,j \leq k \} \ \ \ 
\{ c_{i \cdot} - c_{\cdot\cdot} : 1 \leq i \leq k \} \ \ \  
\{ c_{\cdot j} - c_{\cdot\cdot} : 1 \leq j \leq k \} \ \ \  
c_{\cdot\cdot}
\]
{are independent, and obviously independent of the families  
\[
\{ c_{i \cdot} : k+1 \leq i \leq n \} \ \ \ \   
\{ c_{\cdot j} : k+1\leq j \leq n \}.
\]
Note that the events $\cR_{k}$ and $\cC_{k}$ can be written as
\begin{align*}
\cR_{k} &: = 
\left\{  \min_{1 \leq j \leq k} c_{\cdot j} \geq \max_{k < j \leq n} c_{\cdot j} \right\}
\, =\, 
\left\{ c_{\cdot\cdot} - \max_{1 \leq j \leq k} \{ c_{\cdot \cdot} - c_{\cdot j} \} \geq \max_{k < j\leq n} c_{\cdot j} \right\}\\[.1in]
\cC_{k} &:=
\left\{ \min_{1 \leq i \leq k} c_{i \cdot } \geq \max_{k < i \leq n} c_{i \cdot}  \right\}
\, = \, 
\left\{ c_{\cdot\cdot} - \max_{1 \leq i \leq k} \{ c_{\cdot\cdot} - c_{i \cdot }\}\geq \max_{k<i\leq n} c_{i\cdot}  \right\}
\end{align*}
and therefore
\begin{align*}
\cI_{k,n}
=
\cR_{k} \cap \cC_{k} 
&= \left\{ c_{\cdot\cdot} \geq \max_{k < j \leq n} \{ c_{\cdot j}, c_{j\cdot} \} \right\} 
\ \cap \ 
\left\{ \max_{1\leq j\leq k}\{ c_{\cdot\cdot}-c_{\cdot j}\} \leq c_{\cdot\cdot}- \max_{k<j\leq n} c_{\cdot j} \right\} \\
&\qquad\qquad\cap 
\left\{\max_{1\leq i\leq k}\{ c_{\cdot\cdot} - c_{i \cdot }\} \leq  c_{\cdot\cdot} -  \max_{k<i\leq n} c_{i\cdot} \right\}.
\end{align*}
Now let 
\[
M_{n-k} = a_n \Big(\sqrt{k} \max_{k < j \leq n} c_{\cdot j} - b_n\Big)
\ \ \ \  
M'_{n-k} = a_{n} \Big(\sqrt{k} \max_{k < i \leq n} c_{i \cdot} - b_{n}\Big)
\] 
be the recentered and rescaled maxima of the row and column averages outside $\vC$.
It follows from Lemma~\ref{lem:extreme} that $(M_{n-k}, M'_{n-k})$ converges in distribution to 
$(-\log T,-\log T')$, where $T,T'$ are independent Exponential rate one random variables. 
Using the previous displays, one may express the eveng $\cI_{k,n}$ as follows:
\begin{align}
\cI_{k,n}
& = 
\big\{ \sqrt{k}c_{\cdot\cdot} \geq b_n  + a_{n}^{-1}\max\{M_{n-k},M'_{n-k}\} \big\} \notag\\
&\qquad\cap 
\left\{ \sqrt{k}\max_{1\leq j\leq k}\{ c_{\cdot\cdot}-c_{\cdot j}\} \leq \sqrt{k}c_{\cdot\cdot}- (b_n + a_{n}^{-1}M_{n-k}) \right\}\notag\\
&\qquad\qquad\cap 
\left\{ \sqrt{k}\max_{1\leq i\leq k}\{ c_{\cdot\cdot} - c_{i \cdot }\} \leq  \sqrt{k}c_{\cdot\cdot}- (b_n+ a_{n}^{-1}M'_{n-k}) \right\}.
\label{eqn:ik-decomp}
\end{align}
Note that $kc_{\cdot\cdot},\sqrt{k}c_{i\cdot},\sqrt{k}c_{\cdot j}$ are standard Gaussian random variables. 

For $k \geq 1$ and $x \in \bR$ define $F_{k}(x) = \pr(\max_{1\leq i\leq k}(\bar{Z}-Z_{i}) \leq x)$ where $Z_1,Z_{2},\ldots,Z_{k}$ are independent standard Gaussians and $\bar{Z}=k^{-1}\sum_{i=1}^{k}Z_{i}$. 
Using the independence arising in the ANOVA decomposition and \eqref{eqn:ik-decomp} we find
\begin{align}
\pr(\cI_{k,n}) = \E \bigl( &F_k(k^{-1/2}Z - b_n - a_{n}^{-1}M_{n-k}) \notag \\
&\quad F_k(k^{-1/2}Z - b_n - a_{n}^{-1}M'_{n-k}) \notag \\
&\qquad\ind\{ k^{-1/2}Z\geq b_n  + a_{n}^{-1}\max\{M_{n-k},M'_{n-k}\} \} \bigr), \label{eq:ev}
\end{align}
where $Z$ is independent of $M_{n-k}$ and $M'_{n-k}$.
Define $R_n = a_n (k^{-1/2} Z - b_n)$, and consider the event 
\[
A_n = \{ R_n \geq \max\{M_{n-k},M'_{n-k}\} \}
\]
appearing in (\ref{eq:ev}). 
Lemma~\ref{lem:condz}(\ref{item:condza}) with $\theta=k$ implies that
\[
\sqrt{k}(\sqrt{2 \pi} b_n)^{1-k} \, n^k \, \pr( R_n \geq x) \, \to \,  e^{-k x}
\]
uniformly over $|x| \ll a_n$. Using the fact that $\max\{M_{n-k},M'_{n-k}\}$ 
converges in distribution to $-\log(T/2)$ where $T \sim \text{Exp}(1)$, 
it follows that
\begin{equation}
\label{pAn}
\sqrt{k}(\sqrt{2\pi}b_n)^{1-k} \, n^k \, \pr(A_n) \, \to \, 2^{-k}\E(T^k)=2^{-k} k!. 
\end{equation}
For $x, x', y > 0$ the relation (\ref{pAn}) and the independence of $R_n$, 
$M_{n-k}$, and $M'_{n-k}$ imply that
\begin{align}
\label{pRn}
&\sqrt{k} (\sqrt{2 \pi} b_n)^{1-k} \, n^k \, 
\pr( R_n \geq - \log y, \, M_{n-k} \leq -\log x, \, M'_{n-k} \leq -\log x') \nonumber \\[.03in]
&\qquad \to y^{k} \, e^{-x-x'} \text{ as } n \to \infty. 
\end{align} 
We claim that, conditional on the event $A_n$, 
\begin{equation}
\label{eqn:avg-mn-mn-joint}
	(R_n, M_{n-k}, M'_{n-k}) \Rightarrow ( -\log G, -\log (G+Y), -\log(G+Y'))
\end{equation}
where $G,Y,Y'$ are mutually independent, $Y,Y' \sim \text{Exp}(1)$, and $G \sim \text{Gamma}(k,2)$, 
with density $2^k x^{k-1} e^{-2x} / (k-1)!$ for $x > 0$.  To see this, note that if $0 < y \leq \min\{ x, x' \}$
then
\begin{eqnarray*}
\lefteqn{ \pr( R_n \geq - \log y, \, M_{n-k} \leq -\log x, \, M'_{n-k} \leq -\log x' \, | \, A_n) } \\
& = & 
\pr( R_n \geq - \log y, \, M_{n-k} \leq -\log x, \, M'_{n-k} \leq -\log x' ) \, \pr(A_n)^{-1} \\
& \to &
\frac{2^k}{k!} y^{k} \, e^{-x-x'} \mbox{ as } n \to \infty.
\end{eqnarray*}
where in the last, limiting, step we have used (\ref{pAn}) and (\ref{pRn}).
On the other hand, for $G,Y,Y'$ distributed as above, for the same values 
of $x, x', y$ 
\begin{align*}
\pr( G \leq y, \, G+Y \geq x, \, G+Y' \geq x') 
&= \int_{0}^{y} \frac{2^k}{(k-1)!}t^{k-1}e^{-2t} e^{-(x-t)}e^{-(x'-t)}dt
= \frac{2^k}{k!} y^{k} \, e^{-x-x'},
\end{align*}
in agreement with the previous display.  This establishes (\ref{eqn:avg-mn-mn-joint}).

It follows from (\ref{eq:ev}), (\ref{eqn:avg-mn-mn-joint}), and Lemma~\ref{lem:tail}\eqref{item:taila} that 
\begin{eqnarray*}
\pr(\cI_{k,n}) 
& = & 
\E \bigl( F_k( a_{n}^{-1} (R_n - M_{n-k}) ) \, F_k(a_{n}^{-1} (R_n - M'_{n-k}) ) \, \big| \, A_n \bigr)
\pr(A_n) \\[.1in]
& = & 
\E \bigl( F_k(a_n^{-1} \log(1+{Y}/G)) \, F_k(a_n^{-1} \log(1+{Y}'/G)) \bigr) (1 + o(1))
\pr(A_n) \\[.1in]
& = & 
\alpha_k^2 \, a_n^{-2(k-1)} \E \bigl( \big[ \log(1+{Y}/G)) \log(1+{Y}'/G)) \big]^{k-1} \bigr) (1 + o(1))
\pr(A_n) 
\end{eqnarray*}
where $\alpha_k$ is the constant defined in (\ref{alphak}).  Combining the last expression with (\ref{pAn}),
the expression (\ref{alphak}), and Stirling's formula, we find
\[
\lim_{n \to \infty}{n \choose k}{a_n}^{k-1} \pr({\cI_{k,n}}) 
\ = \ 
\frac{ (2 \pi)^{(k-1)/2} \alpha_k^2 }{\sqrt{k}2^k}\E\big( \big[ \log(1+{Y}/G) \log(1+{Y}'/G) \big]^{k-1} \big).
\]
In particular, 
\[
\pr(\cI_{k,n})= \frac{\theta_k}{{n\choose k}(\log n)^{(k-1)/2}}(1+o(1))
\] 
as $n$ tends to infinity, where
\begin{align}\label{eq:thetak}
\theta_{k}:=\frac{k^{2k+1/2}}{\pi^{(k-1)/2} 2^{2k-1}k!^2} 
\E\big( \big[ \log(1+{Y}/G) \log(1+{Y}'/G) \big]^{k-1} \big).
\end{align}

We now wish to find the asymptotic conditional distribution of the matrix itself given $\cI_k$. Recall that in matrix form, the ANOVA decomposition can be written as 
\begin{align}
	\vC  
	&= c_{..}\vone\vone' + \tilde{\vC}  
	+\begin{bmatrix} c_{1.}-c_{..} \\ c_{2.}-c_{..}\\\vdots\\ c_{k.}-c_{..}\end{bmatrix}
	 \vone'  + \vone  \begin{bmatrix} c_{.1}-c_{..} \\ c_{.2}-c_{..}\\\vdots\\ c_{.k} - c_{..}\end{bmatrix}' \label{eqn:anova-matrix}
\end{align}
 where $\tilde\vC = ((\tilde{c}_{ij}))$ with $\tilde{c}_{ij}= c_{ij}-c_{i\cdot}-c_{\cdot j} + c_{\cdot\cdot}, 1\leq i,j\leq k$ is independent of the event $\cI_{k,n}$. This immediately gives the second term $\tilde \vZ$ in the structure theorem. Now note that by \eqref{eqn:ik-decomp} on $\cI_{k,n}$ the row sums satisfy
\[ \sqrt{k}\max_{1\leq j\leq k}\{ c_{\cdot\cdot}-c_{\cdot j}\} \leq \sqrt{k}c_{\cdot\cdot}- (b_n + a_{n}^{-1}M_{n-k}) \]
Here the term on the left has distribution $\max_{1\leq j\leq k} \set{\bar Z - Z_i}$ where the $Z_i$ are i.i.d.~standard gaussian random variables and $\bar{Z} = \avg(\set{Z_i}_{1\leq i\leq k})$. Further by the ANOVA decomposition, this random variable is independent of the term on the right which by \eqref{eqn:avg-mn-mn-joint} is of order $\Theta_P(a_n^{-1})$. In fact,  \eqref{eqn:avg-mn-mn-joint} implies that conditional on the event $\{ k^{1/2}c_{\cdot\cdot}\geq b_n  + a_{n}^{-1}\max\{M_{n-k},M'_{n-k}\} \}$, the random variable $a_n(\sqrt{k}c_{\cdot\cdot}- (b_n + a_{n}^{-1}M_{n-k}))$ converges in distribution to $\log(1+Y/G)$.
Thus for the third term in the ANOVA decomposition in \eqref{eqn:anova-matrix}, on the event $\cI_{k,n}$ intuitively one is looking at the distribution of $(Z_1 -\bar Z, Z_2-\bar Z, \ldots, Z_k - \bar Z)$ conditional on $\max_{1\leq j\leq k} \set{\bar Z - Z_i} \leq \Theta_P(a_n^{-1})$ which is exactly the type of event Lemma \ref{lem:tail}\eqref{item:tailb} is geared to tackle. An identical argument applies to the last term in \eqref{eqn:anova-matrix}.  

Define the random variables 
\[
X_{n}=a_{n}(\sqrt{k}c_{\cdot\cdot} - b_n),\quad Y_{n}=M_{n-k},\quad Y'_{n}=M'_{n-k}
\]
and the random vectors 
\begin{align*}
\vV^{(n)} &=a_{n}\sqrt{k}(c_{1\cdot}-c_{\cdot\cdot},\ldots,c_{k\cdot}-c_{\cdot\cdot}),\quad
\vV^{'(n)} =a_{n}\sqrt{k}(c_{\cdot 1}-c_{\cdot\cdot},\ldots, c_{\cdot k}-c_{\cdot\cdot}).
\end{align*}
Note that all these random variables are independent. From equation \eqref{eqn:ik-decomp} we have
\[
\cI_{k,n}= \{X_{n}\ge \max\{Y_{n},Y'_{n}\}, \max_{j}V_{j}^{(n)} \le X_n-Y_n, \max_{j}V_j^{'(n)} \le X_n-Y'_n\}
\]
Define the new random vectors 
\[
\vU^{(n)}=\vV^{(n)}/(X_n-Y_n), \vU^{'(n)}=\vV^{'(n)}/(X_n-Y'_n),
\]
For any  compactly supported continuous function $\psi:\dR^{3+2k}\to \dR$ we have
\begin{align*}
&\E(\psi(X_{n},Y_{n},Y'_{n},\vU^{(n)},\vU^{'(n)}); \cI_{k,n})\\
&=  \int_{\{x\ge \max\{y,y'\}, \max_j\{u_j\} \le 1,  \max_j\{u'_j\} \le 1\}} \psi(x,y,y',\vu,\vu') f_{X_n}(x) f_{Y_n}(y) f_{Y'_n}(y')  \\
&\qquad\cdot f_{\vV^{(n)}}((x-y)\vu) f_{\vV^{'(n)}}((x-y')\vu') (x-y)^{k-1}(x-y')^{k-1} dx dy dy' \prod_{i=1}^{k-1}du_{i} du'_{i}
 \end{align*}
 where $f_{Z}$ is the density of $Z$.  Note that the density of $X_n$ is 
\begin{align*}
f_{X_n}(x)&=\frac{\sqrt{k}}{\sqrt{2\pi}a_n}\exp\bigr(-\frac{k(x+a_{n}b_{n})^2}{2a_n^2}\bigr) \\
&= \frac{\sqrt{k}}{\sqrt{2\pi}a_n}\exp(-{kb_n^2}/{2})\cdot\exp(-kx^2/2a_n^2-kxb_n/a_n)
\end{align*}
for $x\in\dR$. Moreover
\[
\frac{\sqrt{k}}{\sqrt{2\pi}a_n}\exp(-{kb_n^2}/{2})= \frac{\sqrt{k}}{k!}(\sqrt{2\pi}a_n)^{k-1}{n\choose k}^{-1}(1+o(1)).
\]
 Also density of $Y_{n}$ is 
\[
f_{Y_n}(y)= \frac{n-k}{a_n}\phi(b_n+y/a_n)(1-\bar{\Phi}(b_n+y/a_n))^{n-k-1}\to e^{-y}e^{-e^{-y}}\text{ as }n\to\infty
\]
for $y\in\dR$. Similarly
\[
f_{Y'_n}(y')= f_{Y_n}(y')\to e^{-y'}e^{-e^{-y'}}\text{ as }n\to\infty
\]
for $y'\in\dR$. Now using Lemma~\ref{lem:tail}\eqref{item:tailb} we have
\[
a_{n}^{k-1}f_{\vV^{(n)}}(\vt)\to c_{k}
\]
as $n\to\infty$ for any fixed $\vt$ satisfying $t_{1}+t_{2}+\cdots+t_{k}=0$ for some constant $c_{k}$ depending only on $k$.  
A simple calculation now shows that
\begin{align*}
&\E(\psi(X_{n},Y_{n},Y'_{n},\vU^{(n)},\vU^{'(n)})\mid \cI_{k,n})\\
&=  \frac{1}{\pr(\cI_{k,n})} \int_{\{x\ge \max\{y,y'\}, \max_j\{u_j\} \le 1, \max_j\{u'_j\} \le 1\}} \psi(x,y,y',\vu,\vu') f_{X_n}(x) f_{Y_n}(y) f_{Y'_n}(y')  \\
&\qquad\cdot f_{\vV^{(n)}}((x-y)\vu) f_{\vV^{'(n)}}((x-y')\vu') (x-y)^{k-1}(x-y')^{k-1}dx dy dy' \prod_{i=1}^{k-1}du_{i} du'_{i}
\\
&\to c'_k\int_{\{x\ge \max\{y,y'\}, \max_j\{u_j\} \le 1, \max_j\{u'_j\} \le 1, \sum u_j=\sum u'_j=0 \}} \psi(x,y,y',\vu,\vu')\\
&\quad  e^{-kx} e^{-y}e^{-e^{-y}} e^{-y'}e^{-e^{-y'}} (x-y)^{k-1}(x-y')^{k-1} dx dy dy' \prod_{i=1}^{k-1}du_idu'_i\\
&= c'_k\int_{\{g>0, t>0, t'>0, \max_j\{u_j\} \le 1, \max_j\{u'_j\} \le 1, \sum u_j=\sum u'_j=0 \}} \psi(-\log g,-\log(g+t),-\log(g+t'),\vu,\vu')\\
&\quad  g^{k-1} e^{-2g-t-t'}(\log(1+t/g)\log(1+t'/g))^{k-1} dgdtdt' \prod_{i=1}^{k-1}du_idu'_i
\end{align*}
as $n\to\infty$ for some constant $c'_{k}$ depending only on $k$.

Thus the conditional distribution of $a_n(c_{\cdot\cdot} - b_n/\sqrt{k}, c_{1\cdot}-c_{\cdot\cdot},\ldots,c_{k\cdot}-c_{\cdot\cdot}, c_{\cdot 1}-c_{\cdot\cdot},\ldots, c_{\cdot k}-c_{\cdot\cdot})$ converges in distribution to  $k^{-1/2}( -\log G, (kU_1-1) \log(1+T/G), \ldots, (kU_k-1) \log(1+T/G), (kU'_1-1) \log(1+T'/G), (kU'_k-1) \log(1+T'/G)) $ where  $(U_{1},U_{2},\ldots,U_{k})$, $(U'_{1},U'_{2},\ldots,U'_{k})$ are i.i.d.~from Dirichlet$(1,1,\ldots,1)$ distribution and $(G,T,T')$ has joint density
 \[
\propto (\log(1+t/g)\log(1+t'/g))^{k-1}g^{k-1}e^{-t-t'-2g},\ g,t,t'\geq 0. 
\]
The $(\log(1+t/g)\log(1+t'/g))^{k-1}$ term is arising from the $(k-1)$-dimensional volume of the simplexes  $\{\max_{1\leq j\leq k}\{\bar{x}-x_j\}\leq \log(1+t/g)\}$ and $\{\max_{1\leq j\leq k}\{\bar{x}-x_j\}\leq \log(1+t'/g)\}$.}
\end{proof}

\section{Variance Asymptotics}
\label{sec:var-asymp}
The aim of this Section is to prove Theorem \ref{thm:var}, which describes the asymptotic 
behavior of the variance of $L_n(k)$.  We require several preliminary results that have
potential application to the analysis of similar local maxima. 

\vskip.1in

\subsection{Preliminary Results}

\begin{lem}\label{lem:bigmax}
Let $\vU$ be a $s\times t$ matrix of independent standard Gaussian entries. 
For fixed $\theta>0$ and $x, y \in \dR$ there is a constant $\eta(s,t,\theta)>0$
such that 
\begin{align*}
& \pr\left( \max_{1\leq i\leq s} u_{i\cdot}\geq \theta b_{n} + x/a_{n}, \ 
                \max_{1\leq j\leq t} u_{\cdot j} \geq  \theta  b_{n} + y/a_{n} \right) \\
& =  (\eta(s,t,\theta)+o(1)) e^{- st\theta((t-1)x+(s-1)y) / (st-1) }  \
n^{- \frac{st(s+t-2) \theta^2}{st-1}} \
(\log n)^{ \frac{st(s+t-2)\theta^2}{2(st-1)} -1}
\end{align*}
where $u_{i\cdot}, u_{\cdot j}$ are, respectively, the average of the $i$-th row and $j$-th column of $\vU$.
\end{lem}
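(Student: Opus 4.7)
My plan is to reduce the joint event to a single row-column pair via inclusion-exclusion, then evaluate the resulting bivariate Gaussian tail. Set $\alpha_n := \theta b_n + x/a_n$ and $\beta_n := \theta b_n + y/a_n$, and note that distinct row averages are mutually independent, distinct column averages are mutually independent, while $\cov(u_{i\cdot}, u_{\cdot j}) = 1/(st)$ for any row $i$ and column $j$. Since
\[
\Big\{\max_i u_{i\cdot}\geq\alpha_n\Big\}\cap\Big\{\max_j u_{\cdot j}\geq\beta_n\Big\}
\, = \, \bigcup_{i=1}^{s}\bigcup_{j=1}^{t}\{u_{i\cdot}\geq\alpha_n,\, u_{\cdot j}\geq\beta_n\},
\]
symmetry gives the union bound $\pr(A\cap B)\leq st\,p_n$, where $p_n := \pr(u_{1\cdot}\geq\alpha_n,\, u_{\cdot 1}\geq\beta_n)$, and Bonferroni gives the matching lower bound $st\,p_n - R_n$, with $R_n$ a sum over pairs of distinct index pairs.

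Next I would compute $p_n$ by standardizing: set $U=\sqrt{t}\,u_{1\cdot}$ and $V=\sqrt{s}\,u_{\cdot 1}$, so that $(U,V)$ is bivariate standard normal with correlation $\rho = 1/\sqrt{st}$. Writing $u=\sqrt{t}\,\alpha_n$ and $v=\sqrt{s}\,\beta_n$, one checks that $u-\rho v = \theta b_n (t-1)/\sqrt{t}(1+o(1))$ and $v-\rho u = \theta b_n (s-1)/\sqrt{s}(1+o(1))$ are both positive and of order $b_n$, placing us in the non-degenerate regime of the classical bivariate Mills-ratio asymptotic
\[
\pr(U\geq u, V\geq v) \, \sim \, \frac{(1-\rho^2)^{3/2}}{2\pi(u-\rho v)(v-\rho u)}\exp\left(-\frac{u^2-2\rho uv+v^2}{2(1-\rho^2)}\right),
\]
which is proved by writing $V=\rho U+\sqrt{1-\rho^2}\,W$ with $W\perp U$, substituting $x=u+z$ in the integral defining $\pr(U\geq u, V\geq v)$, and applying the one-dimensional Mills estimate from Lemma~\ref{lem:gauss_tail} twice.

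Plugging in and expanding yields
\[
u^2 - 2\rho uv + v^2 \, = \, (s+t-2)\theta^2 b_n^2 + \frac{2\theta b_n}{a_n}\bigl[(t-1)x+(s-1)y\bigr] + O(a_n^{-2}),
\]
while $1-\rho^2 = (st-1)/(st)$ and $(u-\rho v)(v-\rho u)\sim\theta^2 b_n^2 (s-1)(t-1)/\sqrt{st}$. Combined with the expansion $b_n^2 = 2\log n - \log(4\pi\log n) + O(1)$ and $b_n/a_n\to 1$, the exponential contributes precisely $n^{-st(s+t-2)\theta^2/(st-1)}(\log n)^{st(s+t-2)\theta^2/(2(st-1))}$ together with the displayed factor $\exp(-st\theta((t-1)x+(s-1)y)/(st-1))$, while the prefactor contributes an additional $1/\log n$. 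Multiplying $p_n$ by $st$ and absorbing all $s,t,\theta$-dependent constants into $\eta(s,t,\theta)$ gives the claimed asymptotic.

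The main obstacle is controlling the correction $R_n$ to ensure $R_n = o(st\,p_n)$. The ``disjoint'' correction terms with $i\neq i'$ and $j\neq j'$ involve four row/column averages whose shared matrix entries contribute only lower-order dependence, so their joint tail behaves essentially like $p_n^2$, giving a combined contribution $O((st)^2 p_n^2) = o(stp_n)$ since $st\,p_n\to 0$. The more delicate cases are pairs sharing a row (or column), such as $\pr(u_{1\cdot}\geq\alpha_n,\,u_{\cdot 1}\geq\beta_n,\,u_{\cdot 2}\geq\beta_n)$, where $u_{\cdot 1}$ and $u_{\cdot 2}$ are correlated through the shared first row. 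Here one has to adapt the bivariate Mills-ratio computation to three correlated Gaussians and verify that the extra constraint produces at least one additional negative power of $n$ relative to $p_n$, so the sum over extra indices is still $o(stp_n)$. Executing these bounds carefully — rather than the leading-order evaluation of $p_n$ itself — is the bulk of the technical work.
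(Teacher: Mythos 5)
Your route is genuinely different from the paper's, and your leading-order analysis is correct: reducing to a single row--column pair and applying the bivariate Mills-ratio asymptotic with $\rho=1/\sqrt{st}$ reproduces exactly the claimed powers of $n$ and $\log n$ and the factor $e^{-st\theta((t-1)x+(s-1)y)/(st-1)}$, with the extra $(\log n)^{-1}$ coming from $(u-\rho v)(v-\rho u)\asymp b_n^2$. (Both your argument and the statement implicitly require $s,t\geq 2$, so that $u-\rho v$ and $v-\rho u$ are of order $b_n$.) The paper avoids inclusion--exclusion entirely: it uses the ANOVA identity $\bigl(\max_i u_{i\cdot}-u_{\cdot\cdot},\ \max_j u_{\cdot j}-u_{\cdot\cdot},\ u_{\cdot\cdot}\bigr)\equald\bigl(t^{-1/2}V_s,\ s^{-1/2}V'_t,\ (st)^{-1/2}Z\bigr)$ with the three coordinates \emph{independent}, conditions on the grand mean $Z$, and invokes the exact tail $\pr(\bar Z-Z_{\min}\geq x)\sim g_k x^{-1}e^{-kx^2/2(k-1)}$ from Lemma~\ref{lem:tail}\eqref{item:tailc}; completing the square in the resulting one-dimensional Gaussian integral then yields the constant in one pass, with no remainder to control. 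What the paper's decomposition buys is precisely the elimination of the correction term $R_n$ that carries all the difficulty in your plan; what your plan buys is that it uses only off-the-shelf bivariate tail asymptotics rather than the tailored Lemma~\ref{lem:tail}\eqref{item:tailc}.

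Two cautions on the part you defer. First, your heuristic for the disjoint pairs ($i\neq i'$, $j\neq j'$) is imprecise: the optimal Gaussian configuration for the joint event reuses the entries where the two row/column crosses intersect, so its large-deviation rate is strictly \emph{less} than twice the single-pair rate and the joint probability is much larger than $p_n^2$. The argument survives only because the rate still strictly exceeds the single-pair rate $st(s+t-2)\theta^2/(2(st-1))$, and that strict inequality must be verified. Second, for the shared-row case (one row and two columns forced large) the relevant trivariate rate is $st(2s+t-4)\theta^2/(2(st-2))$, and the comparison with the single-pair rate reduces to $(2s+t-4)(st-1)-(s+t-2)(st-2)=t(s-1)^2>0$ for $s\geq 2$, so each such term is indeed $O(n^{-\delta})\,p_n$; the shared-column case is symmetric. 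Since $s,t$ are fixed there are only finitely many such finite-dimensional Gaussian computations, so the missing piece is execution rather than idea --- but it is genuine work, and it is exactly the work the paper's independence decomposition makes unnecessary.
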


The heuristic idea behind the proof of Lemma~\ref{lem:bigmax} is the following. If both the maximum row average and maximum column average are bigger than $z$, there will be at least one row (say $i_*$-th row) and one column (say $j_{*}$-th column) with average bigger that $z$. The joint density of the $i_{*}$-th row and $j_{*}$-th column is proportional to $\exp(-(\sum_{i\neq i_*} u_{ij_*}^2 + \sum_{j\neq j_*} u_{i_*j}^2 + u_{i_*j_*}^2 )/2)$. If we minimize $\sum_{i\neq i_*} u_{ij_*}^2 + \sum_{j\neq j_*} u_{i_*j}^2 + u_{i_*j_*}^2 $ under the constraint that $\sum_{i} u_{ij_*}\geq t z,  \sum_{j} u_{i_*j}\geq s z$,  the minimum is achieved at 
\begin{align*}
u_{ij_*} = \frac{(st-s)z}{st-1} \text{ for } i\neq i_*, &\qquad
u_{ij_*} = \frac{(st-t)z}{st-1} \text{ for } j\neq j_{*}\\
\text{and } 
u_{i_*j_*} &= \frac{(2st-s-t)z}{st-1}.
\end{align*}
Plugging in these values in the exponent results in the value $st(s+t-2)z^2/(st-1)$. When $z=\theta b_{n}$, we have 
\[
\exp(- \frac{st(s+t-2)z^2}{2(st-1)})\approx n^{-\frac{st(s+t-2)\theta^{2}}{st-1}},
\]
which is the leading order in the probability. The complete proof is given below. 

\begin{proof}[Proof of Lemma~\ref{lem:bigmax}]
Fix $\theta > 0$ and $x,y \in \bR$, and define $\alpha_n=\sqrt{st}(\theta b_{n} + x/a_{n}) $ and 
$\beta_n=\sqrt{st} (\theta b_{n} + y/a_{n})$.  We wish to bound the probability
\[
p_n : =  
\pr \left(\max_{1\leq i\leq s} u_{i\cdot}\geq \theta b_{n} + x/a_{n}, \, \max_{1\leq j\leq t} u_{\cdot j} \geq  \theta b_{n} + y/a_{n} \right)
\]
\noindent
Let $Z,Z_{1},Z_{2},\ldots, Z_{s},Z'_{1},Z'_{2},\ldots,Z'_{t}$ be independent standard Gaussian random variables,
and define 
\[
V_{s}=\max_{1\leq i\leq s} (Z_{i}-\bar{Z}) \ \mbox{ and } \  V'_{t}= \max_{1\leq j\leq t} (Z'_{i}-\bar{Z'}).
\]
It is easy to see that 
\[
\left( \max_{1 \leq i \leq s} u_{i\cdot} - u_{\cdot\cdot}, \, \max_{1 \leq j \leq t} u_{\cdot j} - u_{\cdot\cdot}, \, u_{\cdot\cdot} \right) 
\ \equald \  
(t^{-1/2} V_{s}, \, s^{-1/2} V'_{t}, \, (st)^{-1/2}Z)
\]
\vskip.04in
\noindent
and it then follows from a routine calculation that
\[
p_n = \pr( V_{s} \geq (\alpha_n- Z)/\sqrt{s}, \, V'_{t}\geq (\beta_n- Z)/\sqrt{t}).
\]
Note that $V_s$ has the same distribution as $\min_{1 \leq i \leq s} (\bar{Z} - Z_i)$, and that a similar relation
holds for $V'_t$.  Thus Lemma~\ref{lem:tail}\eqref{item:tailc} implies that
\[
p_n = g_{s}g_{t}\sqrt{st}
\E \left( h_s(\alpha_n-Z) h_t(\beta_n-Z)
\exp \left(-\frac{(\alpha_n-Z)^2}{2(s-1)}  - \frac{(\beta_n-Z)^2}{2(t-1)}  \right) \right)
\]
where the expectation is with respect to $Z$ and 
\[
h_l(x) :=  \frac{1}{g_l l^{1/2}}\exp\left(\frac{x^2}{2(l-1)}\right) \pr\left( \sum_{i=1}^l Z_{i}-l\cdot \min_{1\le i\le l}Z_i \geq x l^{1/2}\right) 
\]
is a bounded continuous function satisfying $\lim_{x\to\infty} xh_l(x) =1$ for $l\in\{s,t\}$. 
One may easily check  that
\begin{align*}
&\frac{(\alpha_n-z)^2}{s-1}  + \frac{(\beta_n-z)^2}{t-1} + z^2\\
&\qquad= \frac{st-1}{(s-1)(t-1)}\left(z - \frac{(t-1)\alpha_n+(s-1)\beta_n}{st-1}\right)^2 + \frac{t\alpha_n^2+s\beta_n^2-2\alpha_n \beta_n}{st-1}.
\end{align*}
Note that the last term above does not depend on $z$.  Define
\[
q_n \  := \
\exp\left(-\frac{t\alpha_n^2+s\beta_n^2-2\alpha_n\beta_n}{2(st-1)}\right).
\]
  Using the last two displays and the fact that 
$\alpha_n, \beta_n \sim \theta \sqrt{2 st \log n}$, we find
\begin{align*}
p_n 
&= q_n\cdot g_s g_t \sqrt{st} \int_{\bR} h_s\left(\frac{(s-1)(t\ga_n-\gb_n)}{st-1}-z\right) h_t\left(\frac{(t-1)(s\gb_n-\ga_n)}{st-1}-z\right)\\[.1in]
&\qquad\qquad\qquad\qquad\qquad\qquad\qquad \exp\left(-\frac{(st-1)z^2}{2(s-1)(t-1)}\right) dz\\[.1in]
&=  \frac{q_n (st-1)^2 g_s g_t \sqrt{st}(1+o(1)) }{(s-1)(t-1)(t\alpha_n-\beta_n)(s\beta_n-\alpha_n)}\int_{\bR}\exp\left(-\frac{(st-1)z^2}{2(s-1)(t-1)}\right) dz\\[.1in]
&= \frac{\eta(s,t,\theta)(1+o(1))}{\log n}\cdot  q_n
\end{align*}
where $\eta(s,t,\theta)$ is a positive constant.
A straightforward calculation using the definition of $\alpha_n$ and $\beta_n$ shows that 
\[
q_n \ = \ \exp\left(- \frac{st\theta((t-1)x+(s-1)y)}{st-1}\right) \left(\frac{\sqrt{4\pi \log n}}{n}\right)^{\frac{st(s+t-2) \theta^2}{st-1}}(1+o(1)) 
\]
and the proof is complete.
\end{proof}

Another preliminary result needed for the correlation analysis is a joint 
probability estimate for two locally optimal matrices. For integers $s, t \in [k]$, let $\cB_{s,t,k}$ be 
the event that $\vW^k = \vW_{[k]\times[k]}$ is locally optimal as a sub matrix of 
$\vW_{([k] \cup[s+k+1,n]) \times ([k] \cup[t+k+1,n])}$ and the overlapping submatrix 
$\vW_{[s+1,s+k]\times[t+1,t+k]}$ is locally optimal as a sub matrix of  
$\vW_{[s+1,n]\times[t+1,n]}$ (see Figure \ref{fig:bst}). 

\begin{figure}[htbp]
\centering
\includegraphics[scale=.4,page=2]{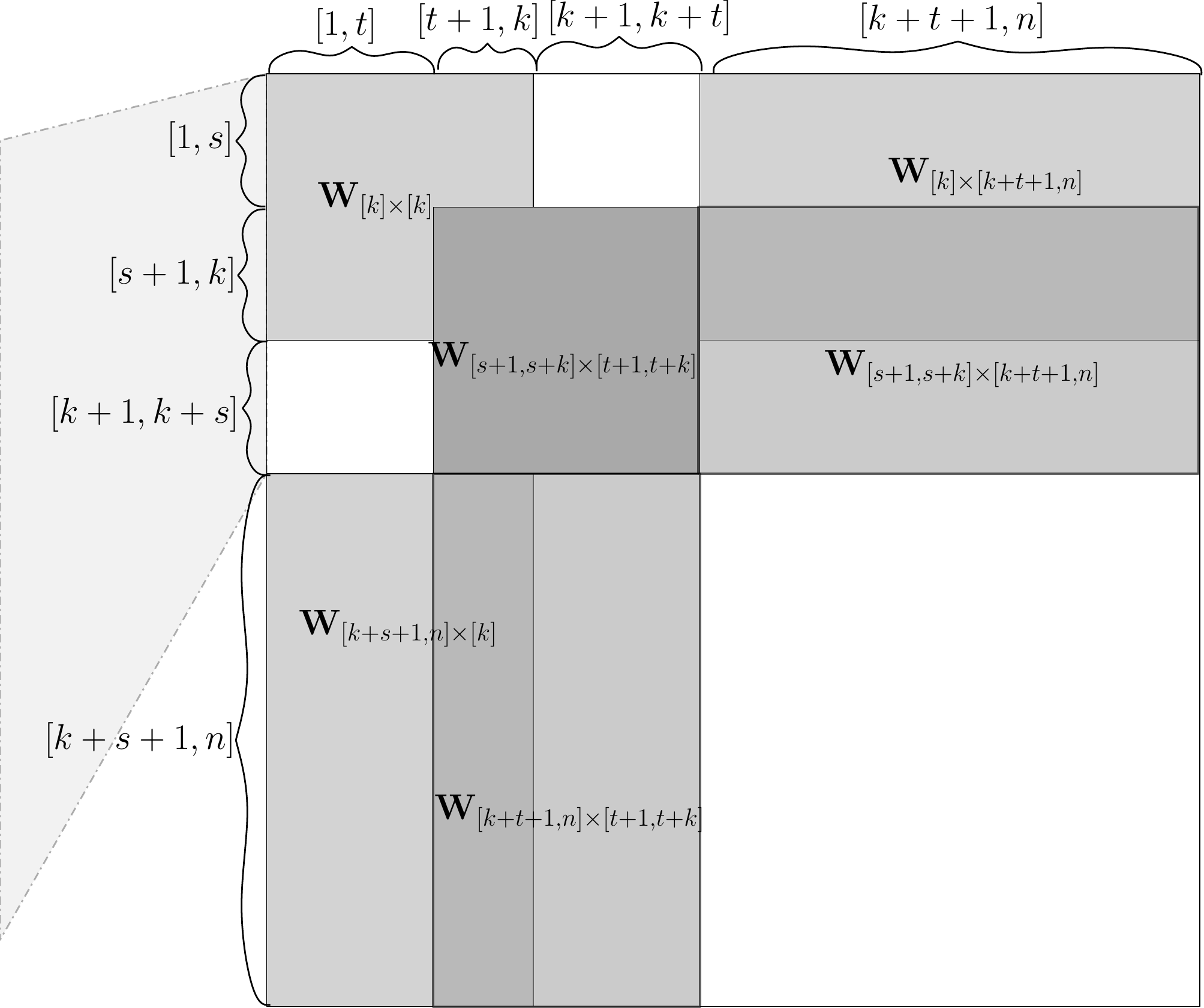}%
\includegraphics[scale=.4,page=1]{Bst.pdf}
\caption{A pictorial representation of the event $\cB_{s,t,k}$ and the block matrices $\vX_{i},1\leq i\leq 7$.}
\label{fig:bst}
\end{figure}

\vskip.1in

\begin{lem}\label{lem:st2}
Let $0 < s, t < k$. There exists a constant $\eta(s,t,k)>0$ such that 
\[
\pr(\cB_{s,t,k}) \ \leq \ \eta(s,t,k) \left(\frac{\sqrt{\log n}}{n}\right)^{2k -2k(k-s)(k-t)/(2k^2-st) }.
\]
\end{lem}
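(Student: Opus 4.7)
My plan is to reduce $\pr(\cB_{s,t,k})$ to a Gaussian tail estimate on the entries of $\vW^k\cup\vW'$, where $\vW':=\vW_{[s+1,s+k]\times[t+1,t+k]}$, and then solve the resulting quadratic program explicitly.

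First, observe that the competing rows $[s+k+1,n]$ and competing columns $[t+k+1,n]$ are index-disjoint from $\vW^k\cup\vW'$, so the interior entries are independent of the four competing maxima $\vM:=(M_1^{(R)},M_2^{(R)},M_1^{(C)},M_2^{(C)})$, where $M_1^{(R)}:=\max_{i>s+k}\sum_{j\in[k]}w_{ij}$ and the others are defined analogously for $\vW'$ (rows) and for the two column-sum maxima. On this product $\sigma$-algebra, $\cB_{s,t,k}$ coincides with the event $\cF(\vM)$ that every row (resp.~column) sum of $\vW^k$ and $\vW'$ dominates the corresponding competing maximum, and $\pr(\cB_{s,t,k})=\E[\pr(\cF(\vM)\mid\vM)]$.

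Next, I would bound $\pr(\cF(\vM)\mid\vM)$ by the Gaussian Chernoff inequality, giving $\pr(\cF(\vM)\mid\vM)\leq\exp(-v^*(\vM))$, where $v^*(\vM)$ is the quadratic program $\min_{\vu}\tfrac12\sum_e u_e^2$ subject to the $4k$ row/column sum inequalities with right-hand sides determined by $\vM$. Decompose $\vW^k\cup\vW'$ into the seven blocks in Figure~\ref{fig:bst}: the overlap $X=[s+1,k]\times[t+1,k]$ of size $(k-s)(k-t)$, the three $\vW^k$-arms $Y_{11},Y_{12},Y_{21}$, and the three $\vW'$-arms $Z_{22},Z_{12},Z_{21}$. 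The $\vW^k\leftrightarrow\vW'$ symmetry forces $z_{22}=y_{11}$, $z_{12}=y_{21}$, $z_{21}=y_{12}$ at the optimum, and intra-block permutation invariance reduces the $4k$ inequalities to four distinct linear constraints in the four block-constant values $(x,y_{11},y_{12},y_{21})$. The resulting Lagrangian system is rank-deficient by one (summing row constraints equals summing column constraints), and solving it yields
\[
v^*(\vM)=\frac{\alpha}{2k}\,\tau(\vM)^2,\qquad\alpha:=2k-\frac{2k(k-s)(k-t)}{2k^2-st},
\]
where $\tau(\vM)$ is a symmetric functional of $\vM$ equal to $\sqrt{k}\,b_n$ when $\vM=\sqrt{k}\,b_n\,\mathbf{1}$.

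Finally, integrate over $\vM$. By classical extreme-value theory each $M_\bullet^{\bullet}/\sqrt{k}=b_n+O_p(1/a_n)$, so $v^*(\vM)=\alpha b_n^2/2+O_p(1)$; using $b_n^2=2\log n-\log(4\pi\log n)+o(1)$ gives
\[
e^{-v^*(\vM)}=e^{O_p(1)}\cdot n^{-\alpha}(\log n)^{\alpha/2}.
\]
The random factor $e^{O_p(1)}$ has finite expectation by the Gumbel tail behavior of $\vM$, and absorbing it together with $(4\pi)^{\alpha/2}$ into the constant $\eta(s,t,k)$ yields the bound in the statement.

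The main obstacle is the explicit quadratic-program computation: identifying the active constraints at the optimum, exploiting the $\vW^k\leftrightarrow\vW'$ and block-permutation symmetries to reduce to four variables, and then extracting $\alpha$ in closed form from the rank-deficient $4\times 4$ Lagrangian system. As a consistency check, $\alpha$ interpolates between $\alpha\to k$ as $(s,t)\to(0,0)$ (two coinciding submatrices, matching Theorem~\ref{thm:st1}) and $\alpha\to 2k$ as $(k-s)(k-t)\to 0$ (nearly disjoint submatrices, matching two independent local optima).
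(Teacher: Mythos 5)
Your proposal is essentially the paper's own argument: the paper also exploits the independence of the $2k^2-st$ interior entries from the four external maxima, reduces to the block averages $S_1,\ldots,S_7$ of the seven regions in Figure~\ref{fig:bst}, bounds the Gaussian measure of the polyhedral feasible set $\cD(x)$ by $\exp(-\min\sum_i\theta_i s_i^2/2)$, solves the quadratic program explicitly (its minimizer $a_1,\ldots,a_7$ has exactly the paired symmetric structure you describe, with all eight constraints active), and obtains the exponent $\left(1-\tfrac{(k-s)(k-t)}{2k^2-st}\right)x^2$, i.e.\ your $\alpha$. Two small points where your write-up is looser than it should be. First, for four distinct thresholds the optimal value $v^*(\vM)$ is a positive semidefinite quadratic form in $\vM$ but not in general a multiple of $\tau(\vM)^2$ for a single linear functional $\tau$; the clean fix (and the paper's choice) is to lower all four thresholds to $m:=\min\{M_r,M_r',M_c,M_c'\}$, which only enlarges the feasible set and hence gives the valid lower bound $v^*(\vM)\ge\frac{\alpha}{2k}(m_+)^2$, after which a union bound over which maximum achieves $m$ removes the coupling. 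Second, the concluding step ``$e^{O_p(1)}$ has finite expectation'' is not automatic from tightness: you must show $\E\exp(-\theta\max\{V_n,0\}^2)\le\gamma(\theta)e^{-\theta b_n^2}$ \emph{uniformly in $n$}, which requires the doubly exponential lower tail $\pr(V_n\le b_n-y/a_n)\lesssim\exp(-ce^{y})$ and a genuine (if routine) computation — the paper isolates this as Lemma~\ref{lem:expmom}. With those two repairs your argument is complete and coincides with the paper's.
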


\vskip.1in



\begin{proof}[Proof of Lemma~\ref{lem:st2}]

Referring to figure \ref{fig:bst}, we define disjoint matrices $\vX_1,\ldots, \vX_7$ in the following way:
\begin{align*}
\vX_1 =\vW_{[s]\times[t]},\qquad \vX_2 &=\vW_{[s]\times[t+1,k]}, &\\
\vX_3 =\vW_{[s+1,k]\times[t]}, \quad \vX_4 &=\vW_{[s+1,k]\times[t+1,k]},\qquad \vX_5 =\vW_{[s+1,k]\times[k+1,k+t]}\\
 \vX_6 &=\vW_{[k+1,k+s]\times[t+1,k]},\quad \vX_7=\vW_{[k+1,k+s]\times[k+1,k+t]}.
\end{align*}
Let $S_{i} = \avg(\vX_i)$ and $\theta_i$ be the number of entries in $\vX_{i}$.  Clearly, 
\begin{align*}
\theta_1&=\theta_{7}=st  \qquad\qquad\qquad  \theta_{2}=\theta_{6}=s(k-t) \\
\theta_3&=\theta_5=(k-s)t  \quad\qquad \  \theta_{4}=(k-s)(k-t).
\end{align*}
The joint density of $(S_1,\ldots,S_7)$ is given by 
\[
g(s_1,\ldots,s_7) \ = \ \prod_{i=1}^7 \sqrt{\theta_i/2 \pi} \exp(-\theta_i s_i^2/2) .
\] 

Define random variables
\[
M_c=\max_{k+t+1\leq j\leq n}\sum_{i=1}^k W_{i,j}
\ \ \ \
M'_c=\max_{k+t+1\leq j\leq n}\sum_{i=1}^k W_{s+i,j} .
\]
Thus $M_c$ is the maximum column sum of the sub-matrix 
$\vW_{[k] \times [k+t+1,n]}$, and $M'_c$ is the maximum column sum 
of the sub-matrix $\vW_{[s+1,s+k] \times [k+t+1,n]}$. Similarly define
\[
M_r=\max_{k+s+1\leq i\leq n}\sum_{j=1}^k W_{i,j}
\ \ \ \ 
M'_r=\max_{k+s+1\leq i\leq n}\sum_{j=1}^k W_{t+i,j}
\]
to be the maximum row sum of the sub-matrix $\vW_{[k+s+1,n] \times [k]}$ and the 
maximum row sum of the sub-matrix $\vW_{[k+s+1,n] \times [t+1,t+k]}$, respectively. 
For a real number $x \in \dR$, let $\cD(x)$ be the set 
\begin{align*}
\cD(x) = \{(s_{1},s_{2},\ldots,s_{7})\in\dR^{7}\mid\ &t s_1+(k-t)s_2\geq x,\quad s s_1+(k-s)s_3\geq x,\\
& s s_2+(k-s)s_4\geq x,\quad t s_3+(k-t)s_4\geq x,\\
& (k-t) s_4+ts_5\geq x,\quad (k-s) s_4+ss_6\geq x,\\
& (k-s) s_5+ss_7\geq x,\quad (k-t) s_6+ts_7\geq x\}.
\end{align*}
Note that $\cD(x)$ is decreasing in $x$. It is easy to see that
\[
\cB_{s,t,k} \subseteq \{(S_{1},S_{2},\ldots,S_{7}) \in \cD(\min\{M_{r},M'_{r},M_{c},M'_{c}\})\}.
\]
Now as $(M_{r},M'_{r},M_{c},M'_{c})$ is independent of $(S_1,\ldots,S_7)$, 
$M_{r} \equald M'_{r}$, and $M_{c} \equald M'_{c}$ we have
\begin{equation}
\label{eqn:pbstk}
\pr(\cB_{s,t,k}) \leq 2 \E( f(M_r) + f(M_{c}) )
\end{equation}
where $f(x) := \pr((S_{1},S_{2},\ldots,S_{7}) \in \cD(x))$.  We claim that
\begin{align}\label{eq:fxbd}
f(x) \leq \exp\left(- \left(1 -\frac{(k-s)(k-t)}{2k^2-st}\right) x^2\right) \ \ \ \ x > 0
\end{align}
To see this, note first that by standard calculus one can check that $\sum_{i=1}^{7}\theta_{i}s_i^{2}$ 
is minimized over $(s_1,\ldots,s_7) \in \cD(x)$ at $a_{i},\ldots, a_{7}$, where 
\begin{align*}
a_{1}&=a_{7}=\frac{(3k-s-t)x}{2k^2-st} \qquad a_{2}=a_{6}=\frac{(2k-t)x}{2k^2-st} \\
a_{3}&=a_{5}=\frac{(2k-s)x}{2k^2-st}  \qquad \ \ \ a_{4}=\frac{2kx}{2k^2-st}. 
\end{align*}
Note that for $(s_1,\ldots,s_7) = (a_{1},\ldots,a_{7})$ all the inequalities defining $\cD(x)$ 
become equalities.  In particular, we have
\begin{align*}
f(x) &:=\pr((S_{1},S_{2},\ldots,S_{7})\in \cD(x))\\
&=  \int_{\cD(x)} \prod_{i=1}^7\sqrt{\theta_i/2\pi}\exp(-\theta_i s_i^2/2)ds_i\\
&= \int_{\cD(0)} \prod_{i=1}^7\sqrt{\theta_i/2\pi}\exp(-\theta_i (s_i+a_i)^2/2)ds_i\\
&= \prod_{i=1}^7\sqrt{\theta_i/2\pi}\exp(-\theta_i a_i^2/2) \int_{\cD(0)} \exp(-\sum_{i=1}^{7}\theta_i (s_i^2/2+a_is_i))ds_i.
\end{align*}
Further note that
\begin{align*}
\frac{2k^2-st}{x}\sum_{i=1}^{7}\theta_i a_is_i = &s(2k-s-t)(t s_1+(k-t)s_2 + (k-t) s_6+ts_7)\\
&
+ kt(s s_1+(k-s)s_3 +  (k-s) s_5+ss_7) \\[.1in]
&
+ s(k-t)(s s_2+(k-s)s_4 + (k-s) s_4+ss_6)\\[.1in]
&
+(k-s)(k-t)(t s_3+(k-t)s_4 + (k-t) s_4+ts_5)
\end{align*}
which is non-negative under $\cD(0)$. Thus we have
\begin{align*}
f(x) &\leq \prod_{i=1}^7\sqrt{\theta_i/2\pi}\exp(-\theta_i a_i^2/2) \int_{\cD(0)} \exp(-\sum_{i=1}^{7}\theta_i s_i^2/2)ds_i\\
&\leq \exp(-\sum_{i=1}^7\theta_i a_i^2/2).
\end{align*}
Simplifying we have
\[
\frac{1}{2}\sum_{i=1}^7\theta_i a_i^2= \left(1 -\frac{(k-s)(k-t)}{2k^2-st}\right) x^2.
\]
This proves the claim~\eqref{eq:fxbd}. 

Now note that $M_{r} \equald \sqrt{k} V_{n-k-s}$ and $M_{c}\equald \sqrt{k} V_{n-k-t}$ 
where $V_n=\max\{Z_1,Z_2,\ldots,Z_n\}$ is the maximum of $n$ independent ${\mathcal N}(0,1)$
random variables.  Combining (\ref{eq:fxbd}) and (\ref{eqn:pbstk}), we complete the proof by showing
that for any constant $\theta > 0$,
\[
\E(\exp(-\theta \max\{V_n,0\}^2 )) \leq \gamma(\theta) \exp(-\theta b_n^2)
\]
for some constant $\gamma(\theta) > 0$ where $b_{n}$ satisfies $e^{-b_n^2 / 2} = \sqrt{2 \pi} b_{n} / n$. 
Letting 
\[
\theta=(1-(k-s)(k-t)/(2k^2 -st)) \mbox{ and } b_n=\sqrt{2\log n} - \log(4\pi\log n)/\sqrt{8\log n} 
\]
then gives the asserted bound for $\pr(\cB_{s,t,k})$.
The following lemma completes the proof.
\end{proof}

\begin{lem}\label{lem:expmom}
Let $V_n: = \max\{ Z_1, Z_2, \ldots, Z_n \}$ be the maximum of $n$ independent
${\mathcal N}(0,1)$ random variables. For any constant $\theta>0$
\[
\E(\exp(-\theta \max\{V_n,0\}^2 )) \, \leq \, \gamma(\theta) \exp(-\theta b_n^2)
\]
for some constant $\gamma(\theta) > 0$ for all $n \geq 1$ where 
$b_{n}=\sqrt{2\log n} - \log(4\pi\log n)/\sqrt{8\log n}$. 
\end{lem}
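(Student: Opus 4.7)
The plan is to apply integration by parts to reduce the expectation to a single integral, then split it at the mode $b_n$ of $V_n$ and control each piece by the Gaussian tail bounds of Lemma~\ref{lem:gauss_tail} together with the defining relation $n\bar{\Phi}(b_n) \sim 1$ built into the centering constant. Since $V_n$ has CDF $\Phi(x)^n$ and $f_{V_n}(x)\,dx = d\Phi(x)^n$, conditioning on the sign of $V_n$ and integrating by parts gives
\[
\E(e^{-\theta\max\{V_n,0\}^2}) \ = \ \Phi(0)^n \, + \, \int_0^\infty e^{-\theta x^2}\, d\Phi(x)^n
\ = \ 2\theta\int_0^\infty x\, e^{-\theta x^2}\, \Phi(x)^n\, dx,
\]
where the boundary term $\Phi(0)^n$ cancels. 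I then split this integral at $x=b_n$.

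For the tail piece $x\geq b_n$, the bound $\Phi(x)^n\leq 1$ yields directly $2\theta\int_{b_n}^\infty x\, e^{-\theta x^2}\,dx = e^{-\theta b_n^2}$. For the bulk piece $0\le x\le b_n$, I would substitute $x=b_n-s/b_n$ with $s\in[0,b_n^2]$. Using $x^2 \geq b_n^2 - 2s$ and $1 - s/b_n^2 \leq 1$, the bulk piece is at most
\[
2\theta\, e^{-\theta b_n^2}\int_0^{b_n^2} e^{2\theta s}\, \Phi(b_n - s/b_n)^n\, ds,
\]
so the remaining task is to show that this $s$-integral is bounded uniformly in $n$.

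To control $\Phi(b_n - s/b_n)^n$ I use $\Phi(y)^n\leq\exp(-n(1-\Phi(y)))$ together with the lower Gaussian tail bound from Lemma~\ref{lem:gauss_tail} and the elementary inequality $s-s^2/(2b_n^2)\geq s/2$ on $s\in[0,b_n^2]$. A short calculation, combined with the relation $n e^{-b_n^2/2}/b_n \to \sqrt{2\pi}$ implied by the definition of $b_n$, gives
\[
n(1-\Phi(b_n - s/b_n)) \ \geq \ c_1\, e^{s/2}\qquad\text{for all } s\in[0,\, b_n^2/2],
\]
so that $\Phi(b_n-s/b_n)^n\leq \exp(-c_1 e^{s/2})$ on this range, and $\int_0^\infty e^{2\theta s - c_1 e^{s/2}}\,ds < \infty$ after the change of variable $u = e^{s/2}$. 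On the remaining range $s\in[b_n^2/2, b_n^2]$ we have $x\in[0,b_n/2]$, and the crude bound $\Phi(b_n/2)^n\leq \exp(-c\, n^{3/4}/\sqrt{\log n})$ dominates $e^{2\theta s}\leq n^{4\theta}$ and contributes a vanishing term. Adding the two pieces yields the claimed inequality; small-$n$ cases where $b_n$ is ill-defined are absorbed into $\gamma(\theta)$.

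The main obstacle is the double-exponential bound $\Phi(b_n-s/b_n)^n\leq \exp(-c_1 e^{s/2})$ in the bulk. This is essentially saying that the Gumbel-type left tail of $V_n$ around $b_n$ decays fast enough (in the rescaled coordinate $s$) to absorb the exponential growth factor $e^{2\theta s}$ coming from $e^{-\theta x^2}$. The correct exponent $e^{s/2}$ (rather than $e^s$) arises because the quadratic deformation $s - s^2/(2b_n^2)$ in $\phi(b_n-s/b_n)$ degrades linearly as $s$ moves away from $0$, and it is the normalization $n\bar{\Phi}(b_n)\sim 1$ built into the definition of $b_n$ that converts a polynomial-in-$n$ factor in the tail probability into the positive constant $c_1$ that makes the bound work.
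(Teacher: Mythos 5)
Your proof is correct. The essential content is the same as the paper's: after extracting the factor $e^{-\theta b_n^2}$ via $(b_n-s/b_n)^2\geq b_n^2-2s$, everything hinges on the double-exponential left-tail bound $\Phi(b_n-s/b_n)^n\leq\exp(-n\bar\Phi(b_n-s/b_n))\leq\exp(-c_1e^{cs})$, obtained from the lower Gaussian tail bound together with the normalization $ne^{-b_n^2/2}/(\sqrt{2\pi}b_n)\to1$; this is exactly the estimate the paper derives for $\pr(X_n\geq x)$ with $X_n=b_n(b_n-V_n)$. Where you differ is in the packaging and in the final absorption step. The paper works with the exponential moment $\E(e^{2\theta X_n}\ind\{V_n\geq0\})$ and controls it by a recursive partition of $[c_*,b_n^2]$ along the sequence $x_0=b_n^2$, $x_{i+1}=\log(tx_i)$ descending to the fixed point of $x=\log(tx)$, which requires some care with the constant $t=2\theta/C$ and the convexity estimate $e^x>tc_*(1+x-c_*)$. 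You instead integrate by parts to get the density-free representation $2\theta\int_0^\infty xe^{-\theta x^2}\Phi(x)^n\,dx$, split at $b_n$, and after the substitution $x=b_n-s/b_n$ bound the bulk integral directly via the change of variable $u=e^{s/2}$, with a crude estimate on $s\in[b_n^2/2,b_n^2]$. Your route is more elementary and avoids the paper's iterative construction entirely; the price is only that your exponent $e^{s/2}$ (rather than $e^s$) is weaker, which is harmless since $\int_0^\infty e^{2\theta s-c_1e^{s/2}}\,ds<\infty$ for every $\theta$. Both arguments handle small $n$ (where $b_n$ is ill-defined or negative) by absorbing them into $\gamma(\theta)$.
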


\begin{proof}[Proof of Lemma~\ref{lem:expmom}]
Define $X_{n}=b_{n}(b_{n}-V_{n})$.  Then
\begin{align*}
\E(\exp(-\theta \max\{V_n,0\}^2 )) 
& \leq \pr(V_n < 0) +  \E(\exp( -\theta (b_n -  X_n/b_n)^2)\ind\{V_n\geq 0\}) \\[.1in]
& \leq 2^{-n} + \exp(-\theta b_n^2 ) \E(\exp(2\theta X_n) \ind\{V_n\geq 0\}).
\end{align*}
It is easy to see that $2^{-n} \exp(\theta b_n^2 )$ is uniformly bounded in $n$, and it suffices to 
show that the same is true of $\E(\exp(2 \theta X_{n}) \ind \{V_n\geq 0\})$. 
For each $c > 0$ it is clear that 
$\E(\exp(\theta X_{n})\ind\{X_n\leq c\}) \leq \exp(\theta c)$ for every $n$. 
Moreover, $V_{n} \geq 0$ implies $X_n \leq b_n^2$, so it suffices to bound 
\begin{equation}
\label{ethetaxn}
\E(\exp(\theta X_{n})\ind\{c\leq X_n\leq b_n^2\})
\end{equation}
for any fixed $c > 0$.  (An appropriate choice of $c$ is given below.)
To this end, note that
\[
\pr(X_{n}\geq x) \, = \, (1-\bar{\Phi}(b_n-x/b_n))^n \, \leq \, \exp(-n \bar{\Phi}(b_n-x/b_n) ).
\]
Using the bound $\bar{\Phi}(u)\geq u^2/(\sqrt{2\pi}(1+u^2))e^{-u^2/2}$ from Lemma~\ref{lem:gauss_tail} we have
\[
n\bar{\Phi}(b_n-x/b_n) \geq \frac{ne^{-b_n^2/2}}{\sqrt{2\pi}b_n}\cdot \frac{b_n^2-x}{1+(b_n-x/b_n)^2}e^{-x^2/2b_n^2} \cdot 
 e^{-x}. 
\]
Clearly $ne^{-b_n^2/2}/(\sqrt{2\pi}b_n)=1+o(1)$.  Let 
\[
B:=\inf_{x\in[0,b_n]}\frac{b_n^2-x}{1+(b_n-x/b_n)^2}e^{-x^2/2b_n^2} \, > \, 0
\]
and define $C:=\min\{B, \theta/e\}$.  It follows from the calculation above that
\[
\pr(X_{n}\geq x) \leq\exp(-Ce^x )
\]
for all $x \in[0,b_n]$.  

In order to bound the expectation in (\ref{ethetaxn}) we will identify an appropriate constant
$c^*$ and break the interval $[c^*,b_{n}^{2}]$ into subintervals where the contribution of each subinterval can be easily bounded. Define $t:=2\theta/C$. Let $x_{0} = b_n^2$ and let $x_{i+1} = \log(t x_i)$ for $i \geq 0$. 
Note that $x_1 \leq b_{n}$ for $n$ sufficiently large. 
Let $c_{*}$ be the largest solution to the equation $x = \log(tx)$ so that $tc_{*}=e^{c_*}$.  The definition of $C$ ensures that $t > e$, therefore the equation $x=\log(tx)$ has two solutions and moreover $c_{*} > 1$. 
It is easy to see that $x_{i}\to c_{*}$ as $i \to \infty$. 
Thus there exists $k$ such that $c^*< x_{k+1} < 2c_* \leq x_1, \ldots, x_k$,
and therefore
\begin{align*}
\E(\exp(\theta X_{n})\ind\{2c^*\leq X_n\leq b_n^2\}) &\leq \sum_{i=0}^k \E(\exp(\theta X_{n})\ind\{x_{i+1}\leq X_n\leq x_{i}\})\\
& \leq \sum_{i=0}^k \exp(\theta x_{i})\pr(X_n\geq x_{i+1}) \\
&\leq \sum_{i=0}^k \exp(\theta x_{i} - Ce^{x_{i+1}})
\leq  \sum_{i=0}^{k} \exp(-\theta x_{i}) 
\end{align*}
where in the last inequality we have used the definition of $x_{i+1}$.  
Using convexity we have  $e^{x}> e^{c_*}(1+x-c_*)=tc_*(1+x-c_*)$ for all $x> c_{*}$. It follows from the definition that $x_{i}= e^{x_{i+1}}/t >  c_*(1+x_{i+1}-c_*)$, and therefore 
$x_{i}-c_{*} > c_{*}(x_{i+1}-c_{*})\geq c_{*}^{k-i}(x_k-c_{*})\geq c_{*}^{k+1-i}$ for $i = 0,\ldots,k-1$. Hence

\hspace*{1.5in}$\displaystyle
\sum_{i=0}^{k} \exp(-\theta x_{i})  \le \sum_{i=0}^{k} \exp(-\theta c_* - \theta c_{*}^{k+1-i})=O(1). $
\end{proof}

\vskip.1in

\subsection{Variance Bound~:~Proof of Theorem~\ref{thm:var}}
Let 
\[
p_n=p_{n}(k) = \pr(\vW^k \text{ is locally optimal as a sub matrix of } \vW^n).
\] 
Theorem~\ref{thm:st1} shows that $p_{n}=(1+o(1)) \theta_{k} {n\choose k}^{-1} (\log n)^{-(k-1)/2}$.
By symmetry we may write
\begin{align*}
 \var(L_{n}(k))
& = \sum_{\gl,\gc\in\sS_{n}(k)} \cov(\ind\{\vW_{\gl}\text{ is locally optimal}\}, \ind\{\vW_{\gc}\text{ is locally optimal}\}) \\[.1in]
& = {n \choose k}^{2} \sum_{s=0}^{k} \sum_{t=0}^{k} {k \choose s} {k \choose t} {n-k \choose s}{n-k \choose t} \cdot \\[.1in]
& \ \ \ \ \ \ \ \cov( \ind\{\vW^k \text{ is locally optimal} \}, \ind\{\vW_{[s+1,s+k]\times[t+1,t+k]}\text{ is locally optimal}\}).
\end{align*}
For $0 \leq s,t \leq k$ define the quantity
\begin{align}\label{eq:vnst}
v_n(s,t)
\, := \, {n\choose k}^{2}{k\choose s}{k\choose t} &{n-k\choose s}{n-k\choose t} \cdot
\\[.1in] 
& \hskip-.9in
\cov( \ind\{ \vW^k \text{ is locally optimal}\},
\ind\{\vW_{[s+1,s+k]\times[t+1,t+k]}\text{ is locally optimal}\}) \notag
\end{align}

We first analyze the case $k=1$, which is relatively straightforward.
When $k=1$ we have $p_n = 1 / (2n-1)$, so that
\[
v_{n}(0,0)= n^2p_n(1-p_n)=\frac{2n^3}{(2n-1)^2}=\frac{1}{2}n(1+o(1))
\]
and 
\[
v_{n}(0,1)=v_{n}(1,0)= - n^2(n-1)p_n^2=-\frac{n^2(n-1)}{(2n-1)^2}=-\frac{1}{4}n(1+o(1)).
\]
Now note that
\begin{eqnarray*}
\lefteqn{ \pr( w_{11} \text{ is locally optimal}, w_{22} \text{ is locally optimal} ) } \\[.1in] 
& = &
2 \pr\left( w_{11}=\max_{i=1,2 ; 1\leq j\leq n} \{w_{ij}, w_{ji}\}, w_{22} = \max_{1 \leq j \leq n} \{w_{2j}, w_{j2}\} \right) \\[.1in]
& = &
2 / ((4n-4)(2n-1)) .
\end{eqnarray*}
Therefore
\[
v_{n}(1,1) \, = \, 
n^2(n-1)^2 \left( \frac{2}{4(2n-1)(n-1)} - p_n^2 \right) 
\, = \, 
\frac{n^2(n-1)}{2(2n-1)^2} = \frac{1}{8}n(1+o(1))
\]
Combining the previous relations yields
\[
\var(L_{n}(1))=\frac{n^2(n-3)}{2(2n-1)^2} = \frac{1}{8}n(1+o(1)).
\]
as desired.

We now establish the variance asymptotics of $L_n(k)$ for fixed $k \geq 2$.  Our argument considers diferent cases, 
depending on the values of $s$ and $t$.  We find that
the dominant contribution comes from the case $s=t=k$, \ie~when the matrices under consideration are 
share no rows or columns. In particular, $v_{n}(k,k)\approx n^{2k^2/(k+1)}=n^{2k-2+2/(k+1)}$ with logarithmic corrections. 

\vskip.2in

\noindent{\bf Case $\mathbf1$.} $s=t=0$:
In this case the matrices are the same, and therefore
\[
0 < v_n(0,0) = {n \choose k}^{2} p_{n} (1-p_{n}) = O(n^k (\log n)^{-(k-1)/2}) .
\]

\vskip.1in

\noindent{\bf Case $\mathbf2$.} $s=0, t>0$ or $s>0, t=0$: 
In this case the matrices have identical row or column sets, but do not overlap. 
It is clear that both matrices cannot be locally optimal at the same time, so the covariance of the indicators is $-p_n^2$,
and the contribution to the overall variance is $|v_n(s,t)| = O(n^{2k+s+t} \, p_n^2) = O(n^k(\log n)^{1-k})$.

\vskip.1in

\noindent{\bf Case $\mathbf3$.} $0<s, t<k$: 
In this case the two submatrices of interest have $k-s > 0$ common rows and $k-t > 0$ common columns. 
Lemma~\ref{lem:st2} implies that
\begin{align*}
0 & \leq \cov(\ind\{\vW^k \text{ is locally optimal}\},
\ind\{\vW_{[s+1,s+k] \times[t+1,t+k]} \text{ is locally optimal}\})\\[.05in]
& \leq \eta(s,t,k) (\log n/n^2)^{k - k(k-s)(k-t)/(2k^2-st) }
\end{align*}
and we therefore obtain the bound 
\begin{equation}
\label{vnst}
0 \, \leq \, v_{n}(s,t) \, = \, O(n^{s+t +2k (k-s)(k-t)/(2k^2-st) } \, (\log n)^{k - k(k-s)(k-t)/(2k^2-st)})
\end{equation}
Note that 
\[
\frac{2k (k-s)(k-t)}{2k^2-st} 
\ = \ 
\frac{2k}{\frac{k(3k-s-t)}{(k-s)(k-t)}-1}
\ \leq \ 
\frac{2k}{\frac{k(3k-s-t)}{(k-(s+t)/2)^2}-1}.
\]
Thus, defining $\theta := (s+t) / 2k$, we find that
\[
s+t +\frac{2k (k-s)(k-t)}{2k^2-st} 
\ \leq \
2k \left(\theta + \frac{(1-\theta)^2}{2-\theta^2} \right) 
\ =\ 
2k \left(\frac{3-\theta^3}{2-\theta^2} -1 \right).
\]
The derivative
\[
\frac{d}{d\theta} \frac{3-\theta^3}{2-\theta^2} 
\ = \ 
\frac{6(1-\theta)+\theta^3}{(2-\theta^2)^2}
\] 
is positive, so $(3-\theta^3) / (2-\theta^2)$ is a strictly increasing function of $\theta$,
which takes values in $[1/k, 1-1/k]$.  Thus for all $1 \leq s,t \leq k-1$ the bound 
(\ref{vnst}) on $v_n(s,t)$ is maximized when $s = t = k-1$, and in this case
\begin{align*}
s+t +\frac{2k (k-s)(k-t)}{2k^2-st}  
& \ = \ 2k-2 + \frac{2k}{k^2+2k-1}\\
&\ =\ \frac{2k^2}{k+1} - \frac{2(k-1)}{(k+1)(k^2+2k-1)}.
\end{align*}
Thus, for $0<s,t<k$ we have
\[
0\leq v_{n}(s,t) = O\left(n^{\frac{2k^2}{k+1} - \frac{2(k-1)}{(k+1)(k^2+2k-1)}} \, (\log n)^{k-\frac{k}{k^2+2k-1}} \right).
\]

\vskip.1in

\noindent{\bf Case $\mathbf 4$.} $s=t=k$: 
In this case the two submatrices of interest have no common rows or columns.
We will show that 
\[
v_{n}(k,k)= (\nu_k+o(1))n^{2k^2/(k+1)}(\log n)^{-k^{2}/(k+1)}
\]
for some constant $\nu_{k}>0$.  Define events
\[
\cI_{n-k} = \big\{ \vW_{[k] \times [k]} \mbox{ is locally optimal as a submatrix of } 
\vW_{([k] \cup [2k+1,n]) \times ([k] \cup [2k+1,n])} \big\}
\] 
and 
\[
\cI'_{n-k} = \big\{ \vW_{[k+1,2k] \times[k+1,2k]} \mbox{ is locally optimal as a submatrix of } 
\vW_{[k+1,n]\times[k+1,n]} \big\}. 
\]
These two events are independent and 
$\pr(\cI_{n-k}) = \pr(\cI'_{n-k}) = p_{n-k}$. 
Let $\vW^{*} = (w^*_{ij})_{k\times k}$ and $\vW^{**} = (w^{**}_{ij})_{k\times k}$ denote the 
matrices $\vW_{[k] \times [k]}$ and $\vW_{[k+1,2k] \times [k+1,2k]}$, respectively, conditional on 
the event $\cI_{n-k} \cap \cI'_{n-k}$.  
Finally, let 
\[
\vC:=\vW_{[k]\times[k+1,2k]} \mbox{ and } \vC':=\vW_{[k+1,2k]\times[k]} 
\] 
be the submatrices capturing the dependence between the local optimality of 
$\vW^{*}$ and $\vW^{**}$ in the full matrix $\vW^n$. 
See figure~\ref{fig:stk} for an illustration of the submatrices
under study.

\begin{figure}[hbtf]
\includegraphics[width=6cm,page=3]{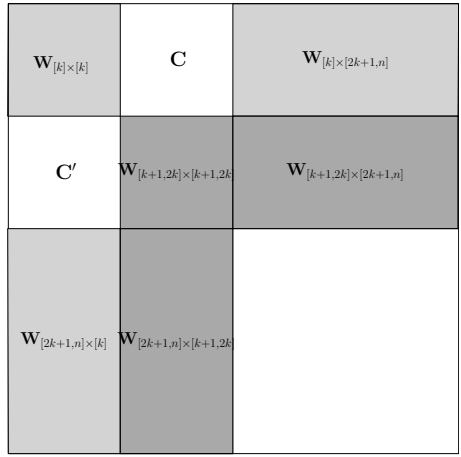}
\label{fig:stk}
\caption{The event $\cI_{k,n-k}$ corresponds to the matrix $\vW_{[k]\times[k]}$ being optimal in the 
light gray region and the event $\cI'_{k,n-k}$ corresponds to the matrix $\vW_{[k+1,2k]\times[k+1,2k]}$ 
being optimal in the dark gray region.}
\end{figure}

Let $\cI_n$ be the event that $\vW_{[k] \times [k]}$ is locally optimal in $\vW^n$, and let $\cI_n'$ be the event that
$\vW_{[k+1,2k] \times [k+1,2k]}$ is locally optimal in $\vW^n$.  
By conditioning on $\cI_{n-k} \cap \cI^\prime_{n-k}$ we have 
\begin{eqnarray}
\pr( \cI_n \cap \cI_n' )  
& = & p_{n-k}^{2} \, 
\E \big( \ind \left\{ \min w^{*}_{i \cdot} \geq \max c'_{i \cdot} ,  \min w^{*}_{\cdot j} \geq \max c_{\cdot j} \right\}  
\nonumber \\[.04in]
\label{pnk2}
& & \hskip.6in \cdot \ind \left\{ \min w^{**}_{i \cdot} \geq \max c_{i \cdot}, \min w^{**}_{\cdot j} \geq \max c'_{\cdot j} \right\} \big) 
\end{eqnarray}
Here and in what follows minima and maxima are taken over appropriate row or column index sets of size $k$.
The standard ANOVA decomposition ensures that the random variables 
\[
\max c_{i\cdot} - c_{\cdot\cdot}
\ \ \ 
\max c_{\cdot j} - c_{\cdot\cdot} 
\ \ \ 
\max c'_{i\cdot} - c'_{\cdot\cdot} 
\ \ \ 
\max c'_{\cdot j} - c'_{\cdot\cdot}
\ \ \ 
c_{\cdot\cdot}
\ \ \ 
c'_{\cdot\cdot}
\]
are mutually independent.  Now let $d_{\cdot\cdot}$ and $d'_{\cdot\cdot}$ be independent copies of 
$c_{\cdot\cdot}$ and $c'_{\cdot\cdot}$, respectively.  One may readily verify that the random triple
\[
(\vW^{**},d_{\cdot \cdot}+\max c_{i\cdot}-c_{\cdot\cdot}, \max c'_{\cdot j} -c'_{\cdot\cdot}+d'_{\cdot\cdot})
\]
is an independent copy of the triple $(\vW^{*},\max c'_{i\cdot}, \max c_{\cdot j})$. 
Therefore
\begin{align*}
p_n 
&= p_{n-k} \pr\left(   \min w^{*}_{i \cdot} \geq \max c'_{i \cdot}, \, \min w^{*}_{\cdot j} \geq \max c_{\cdot j}  \right)\\[.1in]
&=  p_{n-k} \pr\left(  \min w^{**}_{i \cdot} \geq \max c_{i \cdot} - c_{\cdot\cdot} + d_{\cdot\cdot}, \, 
                                 \min w^{**}_{\cdot j} \geq \max c'_{\cdot j} - c'_{\cdot\cdot} + d'_{\cdot\cdot} \right)
\end{align*}
and, using independence of the triples,
\begin{align*}
p_{n}^{2} 
\ = \ 
p_{n-k}^2 \, \E \big( &\ind\left\{ \min w^{*}_{i \cdot} \geq \max c'_{i \cdot}, \, \min w^{*}_{\cdot j} \geq \max c_{\cdot j} \right\}\\
& \cdot \ind\left\{ \min w^{**}_{i \cdot} \geq \max c_{i \cdot} - c_{\cdot\cdot} + d_{\cdot\cdot}, \, 
                             \min w^{**}_{\cdot j} \geq \max c'_{\cdot j} - c'_{\cdot\cdot} + d'_{\cdot\cdot} \right\} \big).
\end{align*} 
Combining the previous two equations with (\ref{pnk2}), we find that  
\begin{align*}
&\cov( \cI_n, \cI_n' ) \ = \ \pr( \cI_n \cap \cI_n') - p_n^2 \\
& = p_{n-k}^{2} \E\biggl( \ind \{ \min w^{*}_{i\cdot} \geq \max c'_{i\cdot}, \, \min w^{*}_{\cdot j} \geq \max c_{\cdot j}\} \\
&\qquad\qquad\qquad\cdot \Big[ \ind \{ \min w^{**}_{i\cdot} \geq \max c_{i\cdot}, \,  \min w^{**}_{\cdot j} \geq \max c'_{\cdot j}\} \\
&\qquad\qquad - \ind\{ \min w^{**}_{i\cdot} \geq \max c_{i\cdot} - c_{\cdot\cdot}+d_{\cdot\cdot}, 
                                   \, \min w^{**}_{\cdot j} \geq \max c'_{\cdot j} -c'_{\cdot\cdot}+d'_{\cdot\cdot}\} \Big] \biggr).
\end{align*}
Now define random variables
\begin{align*}
E:=  \min w^{*}_{i\cdot} - \max (c'_{i\cdot}-c'_{\cdot\cdot}), &\qquad F:= \min w^{*}_{\cdot j} - \max (c_{\cdot j}-c_{\cdot\cdot}), \\
G:= \min w^{**}_{i\cdot} - \max (c_{i\cdot} - c_{\cdot\cdot}), &\qquad
H:= \min w^{**}_{\cdot j} - \max (c'_{\cdot j} -c'_{\cdot\cdot}).
\end{align*}
Note that $E,F,G,H$ are independent of $c_{\cdot\cdot},c'_{\cdot\cdot},d_{\cdot\cdot},d'_{\cdot\cdot}$ and that, by construction, 
$c_{\cdot\cdot} \stackrel{d}{=} d_{\cdot \cdot}$ and $c'_{\cdot\cdot}  \stackrel{d}{=} d'_{\cdot \cdot}$. Thus we have
\begin{eqnarray*}
\lefteqn{ p_{n-k}^{-2} \, \cov( \cI_n, \cI_n' ) } \\[.1in]
& = & \E( \ind \{ E \geq  c'_{\cdot \cdot} ,  F \geq c_{\cdot \cdot} \} \,
( \ind \{ G \geq  c_{\cdot \cdot},  H \geq  c'_{\cdot \cdot} \} - \ind\{ G \geq d_{\cdot \cdot},  H \geq d'_{\cdot \cdot}\})) \\[.13in]
& = & \E\Big\{ \E\big[ \pr(c_{\cdot\cdot}\leq \min\{F,G\})\pr(c'_{\cdot\cdot}\leq \min\{E,H\})\\
& & \qquad - \pr(c_{\cdot\cdot}\leq F)\pr(c_{\cdot\cdot}\leq G)\pr(c'_{\cdot\cdot}\leq E)\pr(c'_{\cdot\cdot}\leq H) 
       \mid E,F,G,H \, \big] \Big\} \\[.1in]
& = & \E\Big\{ \E\big[ \pr(c_{\cdot\cdot} \leq \min\{F,G\}) \pr(c'_{\cdot\cdot} \leq \min\{E,H\})\\
& & \qquad (1 - \pr(c_{\cdot\cdot} \leq \max\{F,G\}) \pr(c'_{\cdot\cdot} \leq \max\{E,H\}) ) \mid E,F,G,H \, \big] \Big\} \\[.1in]
& = & \E\Big\{ \E\big[ \pr(c_{\cdot\cdot} \leq \min\{F,G\})\pr(c'_{\cdot\cdot} \leq \min\{E,H\} )\\
& & \qquad \big( \pr(c_{\cdot\cdot} \geq  \max\{F,G\}) \, + \, 
       \pr(c_{\cdot\cdot}\leq  \max\{F,G\}) \pr(c'_{\cdot\cdot}\geq  \max\{E,H\} ) \big) \mid E,F,G,H \big] \Big\} ,
\end{eqnarray*}
where in the last step we used the elementary identity $1 - P(A) P(B) = P(A^c) + P(A) P(B^c)$.
The Structure Theorem \ref{thm:st1} ensures that $a_{n}(w^{*}_{i\cdot} - b_{n}/\sqrt{k})$ and 
$a_{n}(w^{*}_{\cdot j} - b_{n}/\sqrt{k})$, and the analogous quantities involving $w^{**}$, are tight. 
Thus the previous display yields
\begin{eqnarray*}
p_{n-k}^{-2} \, \cov( \cI_n, \cI_n' ) 
& = & 
(2+o(1)) \pr(c_{\cdot\cdot}\geq \max\{F,G\}) \\[.06in]
& = &
(2+o(1)) \pr( \max c_{\cdot j}\geq \min w^{*}_{\cdot j},\max c_{i\cdot}\geq \min w^{**}_{i\cdot}).
\end{eqnarray*}
Applying Lemma~\ref{lem:bigmax} with $s=t=k$ and $\theta = 1 / \sqrt{k}$ we have 
\begin{align*}
v_{n}(k,k) \ & = \  {n\choose k}^{2}{n-k\choose k}^{2} p_{n-k}^{2} (2+o(1)) 
\pr( \max c_{\cdot j} \geq \min w^{*}_{\cdot j}, \, \max c_{i \cdot} \geq \min w^{**}_{i \cdot}) \\[.1in]
& = \ \Theta\big( n^{2k - 2k/(k+1)}(\log n)^{k/(k+1)-1-(k-1)}\big)\
 = \ \Theta\big( (n/\sqrt{\log n})^{2k^2/(k+1)} \big).
\end{align*}

\vskip.1in

\noindent{\bf Case $\mathbf 5$.} $s<k$ and $ t=k$:  In this case note that
\begin{align*}
&\pr(\vW_{[k]\times[k]}\text{ is locally optimal}, \vW_{[s+1,s+k]\times[k+1,2k]}\text{ is locally optimal})\\
&\leq \pr(\vW_{[k]\times[k]}\text{ is row optimal}, \vW_{[s+1,s+k]\times[k+1,2k]}\text{ is row optimal})
= {n\choose k}^{-2}.
\end{align*}
Thus we have 
$
|v_{n}(s,k)|= O(n^{2k+s+k-2k})=O(n^{2k-2})
$
 for $s\leq k-2$. We need to consider the case $t=k,s=k-1$ separately as $2k-1>2k^{2}/(k+1)$. However, using a similar analysis done in case $4$ and the fact that $\pr(\max c_{i \cdot }\geq \max w^{**}_{i\cdot}) =O(\sqrt{\log n}/n)$ where $\vC=\vW_{[k-1]\times[k+1,2k]}$ we have 
\[
|v_{n}(k-1,k)|=O(n^{2k+2k-1-2k-1}\sqrt{\log n}) = O(n^{2k-2}\sqrt{\log n}).
\]
Note that in the case when $s=k-1,t=k$, the number of sub matrix  pairs and covariance term balance each other in a subtle way.

\vskip.1in 

\noindent{\bf Case $\mathbf 6$.} $s=k$ and $ t<k$: Similar to Case $5$.  \\
 
Combining everything we finally have
\[
\var(L_{n}(k))=(\nu_k+o(1)) (n/\sqrt{\log n})^{2k^2/(k+1)}
\]
for some constant $\nu_{k}>0$ where the $o(1)$ term decays like \\[.1in]
\hspace*{1.5in}$\displaystyle
(\log n/n^2)^{\frac{k-1}{(k+1)(k^2+2k-1)}}(\log n)^{2k-1}.$\hfill\qed

\subsection{Local versus Global Optima}
\begin{proof}[Proof of Corollary \ref{cor:glob-loc}]
Fix numbers $c_n > 0$ such that $c_n \, a_n \to \infty$.
To simplify what follows, let 
$\tilde{L}_n(k) = L_{n}(k: k^{-1/2} b_n - c_n)$.
Note that $0 \leq \tilde{L}_n(k) / L_n(k) \leq 1$ for each $n$, so it suffices to show 
that the expected value of the ratio tends to one.  
Abbreviating ``locally optimal'' by ``loc-opt'', elementary calculations show that
\begin{align*}
\E \tilde{L}_{n}(k)  &= 
\sum_{\gl \in \sS_{n}(k)} 
\pr \big(\vW_{\gl} \text{ loc-opt } \vW^n \ \text{and} \, \avg(\vW_{\gl}) \geq  k^{-1/2} b_n - c_n \big) \\[.1in]
& = 
\sum_{\gl \in \sS_{n}(k)} 
\pr \big(\vW_{\gl} \text{ loc-opt } \vW^n \big) 
\pr \big( \avg(\vW_{\gl}) \geq k^{-1/2} b_n - c_n \, \big| \, \vW_{\gl} \text{ loc-opt } \vW^n \big)  \\[.1in]
& = 
{n \choose k}^2 \pr(\cI_{k,n}) 
\pr \big( \avg(\vW^k) \geq k^{-1/2} b_n - c_n \, \big| \, \cI_{k,n} \big)  \\[.1in]
& = 
\E L_n(k) \cdot \pr \big( \avg(\vW^k) \geq k^{-1/2} b_n - c_n \, \big| \, \cI_{k,n} \big)  .
\end{align*}
Rearranging, we have
\begin{equation}
\label{ratioe}
\frac{\E \tilde{L}_{n}(k) }{\E L_n(k)}
\ = \ 
\pr \big( \avg(\vW^k) \geq k^{-1/2} b_n - c_n \, \big| \, \cI_{k,n} \big)  
\end{equation}
It follows from Theorem \ref{thm:st1} that, conditional on $\cI_{k,n}$,
\[
\avg(\vW^k) \ = \ \frac{b_n}{k^{1/2}} + \frac{\avg(R_n)}{k^{1/2} \, a_n}
\]
where $\avg(R_n) \probd -\log(G)$, and in particular, $\avg(R_n) = O_P(1)$.  Thus our 
assumption on $c_n$ ensures that $\E \tilde{L}_{n}(k) / \E L_n(k) \to 1$ as $n$ tends 
to infinity.  To complete the proof, note that
\begin{eqnarray*}
\left| \, \E \left( \frac{\tilde{L}_{n}(k)}{L_n(k)} \right) - \frac{\E \tilde{L}_{n}(k) }{\E L_n(k)} \, \right|
& \leq &
\E \left| \, \frac{\tilde{L}_{n}(k)}{L_n(k)} \cdot \frac{\E L_n(k) - L_{n}(k) }{\E L_n(k)} \, \right| \\[.1in]
& \leq &
\E \left| \, \frac{L_{n}(k) - \E L_n(k)}{\E L_n(k)} \, \right| \leq 
\frac{ \var(L_{n}(k))^{1/2} }{ \E L_n(k) } 
\end{eqnarray*}
where in the last two steps we have made use of the fact that $\tilde{L}_n(k) / L_n(k) \leq 1$
and Jensen's inequality.  It follows from Theorems \ref{thm:mean} and \ref{thm:var} that the
final term above tends to zero with increasing $n$, and this completes the proof.
\end{proof}

\vskip.3in

\section{Proof of the Central limit theorem}
\label{sec:clt} 
The last section analyzed first and second order properties of the number of local optima $L_n(k)$. The aim of this section is to prove the Central Limit Theorem~\ref{thm:clt} for $L_n(k)$, for fixed $k\geq 2$. For submatrix $\gl=I\times J\in\sS_n(k)$ define 
\[
\cI_{\gl}:=\ind\{\vW_{\gl} \text{ is locally optimal for } \vW_{[n]\times[n]}\}.
\]
Write 
$
L:=L_{n}(k)=\sum_{\gl\in\sS_{n}(k)}\cI_{\gl}
$
for the total number of locally optimal sub matrices of size $k\times k$. To emphasize the dependence on the underlying matrix $\vW:= \vW_{[n]\times [n]}$, when necessary we will write $\cI_{\gl}(\vW),L(\vW)$ instead of $\cI_{\gl},L$ respectively.  

Let 
\[
p_{n}=\E(\cI_{\gl}),\ \mu=\E(L)={n\choose k}^{2} p_{n} \text{ and }\gs^{2}=\var(L).
\]
From Theorem~\ref{thm:mean} and Theorem~\ref{thm:var} we have
\[
\mu= \frac{\theta_k n^k}{k!(\log n)^{(k-1)/2}}(1+o(1)) \text{ and } \gs^{2}=\frac{\nu_k n^{2k^2/(k+1)}}{(\log n)^{k^2/(k+1)}}(1+o(1))
\]
for some constant $\theta_{k},\nu_{k}>0$.  Thus
\begin{align}\label{eq:vmratio}
\frac{\gs}{\mu} =(1+o(1))\frac{\alpha_k}{n^{k/(k+1)}(\log n)^{1/(2k+2)}}=o(1).
\end{align}
where $\alpha_k = k!\nu_k/\theta_k> 0$.
Let $\vW'=((w'_{ij}))$ be an i.i.d.~copy of the underlying matrix $\vW$. 
For any fixed submatrix $\gl=I\times J\in\sS_{n}(k)$, define 
\begin{align*}
w^{\gl}_{ab}=
\begin{cases}
w'_{ab} &\text{ if either $a\in I$ or $ b\in J$}\\
w_{ab} &\text{ if $a\notin I$ and $b\notin J$},
\end{cases}
\end{align*}
$\vW^{\gl}=((w^{\gl}_{ij}))$ and $L^{\gl}:=L(\vW^{\gl})$. Thus we replace {\bf all $n$} entries for the row set and column set of $\lambda$ by independent and identical entries $w_{ab}^\lambda$.  If $\gl$ is chosen uniformly at random from $\sS_{n}(k)$, it is easy to see that $\vW^{\gl}$ and $\vW$ form an exchangeable pair. However we will not use the exchangeable pair approach for Stein's method as the conditional error $\E(L^{\gl}-L\mid \vW)$ is not linear with $L$.  Recall from the discussion on Stein's method in Section \ref{sec:stein}, in order to prove that $\hat L = (L-\mu)/\sigma$, one needs to bound $|\E(g'(\hat L) - \hat L g(\hat L))|$ for $g$ in the class of functions $\cD'$ in \eqref{eqn:class-fn-stein}. We will use a direct argument to bound this quantity.   

First note that $\cI_{\gl}(\vW)$ is independent of $L^{\gl}$. Thus  for any twice differentiable function $f$, we have
\begin{align*}
\E((L- \mu)f(L))
&=\sum_{\gl} \E (\cI_{\gl}f(L) - p_n f(L) ) \\
 &=\sum_{\gl}  \E( \cI_\gl (f(L) - f(L^\gl)))\\
&= \sum_{\gl}  \E\bigl( \cI_{\gl} ( (L- L^\gl) f'(L) -  \frac{1}{2}(L - L^\gl)^2 f''(L^\gl_*) )\bigr)
\end{align*}
where $L^\gl_*$ is a random variable.
In particular with $\hat{L}=(L-\mu)/\gs$ and $f(x)=g((x-\mu)/\gs)$ we have
\begin{align*}
|\E(\hat{L}g(\hat{L}) - g'(\hat{L}))|
& \leq \frac{||g'||_{\infty}}{\gs^2} \E\bigl|\sum_{\gl} \cI_\gl\E\bigl( L- L^\gl\mid \vW\bigr) - \gs^2\bigr|
 + \frac{||g'||_{\infty}}{2\gs^3} \E\sum_{\gl} \cI_{\gl}(L - L^\gl)^2.
\end{align*}
Note that by symmetry
\begin{align}
\E\sum_{\gl} \cI_{\gl}(L - L^\gl)^2 = \mu \E( (L - L^{\gl_0})^2\mid \cI_{\gl_0} )
\end{align}
where $\gl_{0}={[k]\times[k]}$ and for simplicity we write $E(\cdot\mid \cI_{\gl_0} ):= E(\cdot\mid \cI_{\gl_0}=1 ) $. Thus using Lemma~\ref{lem:stein} we have
\begin{align}\label{eq:stein1}
d_{\cW}(\hat{L},\text{N}(0,1)) \leq \frac{1}{\gs^2} \E\bigl|\sum_{\gl} \cI_\gl\E\bigl( L- L^\gl\mid \vW\bigr) - \gs^2\bigr|+ \frac{\mu}{\gs^3} \E( (L - L^{\gl_0})^2\mid \cI_{\gl_0} ).
\end{align}

Recall that, for $\gl,\gc\in\sS_{n}(k)$,  $|\gl\cap\gc|=(s,t)$ implies that $\gl$ and $\gc$ share $s$ many rows and $t$ many columns. For {fixed} $\gl\in\sS_{n}(k)$, define 
\[
\sS_{\gl}(s,t):=\{\gc\in\sS_{n}(k)\mid |\gl\cap\gc|=(k-s,k-t)\},\qquad {0}\leq s,t\leq k.
\]
Thus $\sS_{\gl}(s,t)$ consists of the set of submatrices which are $s$ rows and $t$ columns \emph{different }from $\lambda$. Write
\[
S_{\gl}(s,t):= \sum_{\gc\in \sS_{\gl}(s,t)} (\cI_\gc -  \cI_{\gc}(\vW^{\gl})) 
\]
so that we have $L-L^{\gl}=\sum_{0\leq s,t\leq k} S_{\gl}(s,t)$. Clearly 
\[
|\sS_{\gl}(s,t)|={k\choose s}{k\choose t}{n-k\choose s}{n-k\choose t}=O(n^{s+t}).
\] 
Let 
\[
u_{n}(s,t):= \E( S_{\gl}(s,t) \mid \cI_{\gl})
\]
By symmetry, this term is the same for all $\gl$. Recall from \eqref{eq:vnst} that the variance of $L_n(k)$ could be expressed as $\gs^2 =\sum_{s,t}v_{n}(s,t)$ where 
\begin{align*}
v_n(s,t):= {n\choose k}^{2}{k\choose s}{k\choose t} &{n-k\choose s}{n-k\choose t} \cov(\ind\{\vW_{[k]\times[k]}\text{ is locally optimal}\},\notag\\
&\ind\{\vW_{[s+1,s+k]\times[t+1,t+k]}\text{ is locally optimal}\})
\end{align*} 
A simple conditioning argument shows that $v_{n}(s,t)=\mu u_{n}(s,t)$. Now let us consider the first term in the bound \eqref{eq:stein1}.
\begin{align*}
\E&\bigl|\sum_{\gl} \cI_\gl\E\bigl( L - L^\gl\mid \vW\bigr) - \gs^2\bigr|\\
&\leq \sum_{s=0}^{k}\sum_{t=0}^{k} \E| \sum_{\gl \in\sS_{n}(k)}  \cI_{\gl} \E(S_{\gl}(s,t)\mid \vW) - \mu u_n(s,t)|\\
&\leq \sum_{s=0}^{k}\sum_{t=0}^{k} ( |\sS_{n}(k)|\cdot \E|\cI_{\gl_0} \E(S_{\gl_0}(s,t) - u_n(s,t)\mid \vW) |+ |u_n(s,t)| \cdot \E |L - \mu| )\\
&\leq \mu \sum_{s=0}^{k}\sum_{t=0}^{k} \E(| \E(S_{\gl_0}(s,t)\mid \vW) - u_n(s,t)| \mid \cI_{\gl_0} )+ \gs\sum_{s=0}^{k}\sum_{t=0}^{k} |u_n(s,t)|.
\end{align*}

Similarly for the second term in \eqref{eq:stein1} we have
\begin{align*}
\sqrt{\E( (L - L^{\gl_0})^2\mid \cI_{\gl_0} )} &\leq \sum_{s=0}^{k}\sum_{t=0}^{k} \sqrt{\E(S_{\gl_0}(s,t)^2 \mid \cI_{\gl_0}) }\\
&\leq \sum_{s=0}^{k}\sum_{t=0}^{k} (|u_n(s,t)| + \sqrt{\var(S_{\gl_0}(s,t) \mid \cI_{\gl_0}) }).
\end{align*}

The proof of the variance estimate in Theorem~\ref{thm:var} shows that $u_{n}(s,t)\geq 0$ for $st>0$ and $u_{n}(s,t)=-|\sS_{\gl_0}(s,t)|p_n$ for $st=0, s+t>0$. In particular we have 
\[
\sum_{s=0}^{k}\sum_{t=0}^{k} |u_n(s,t)| = \frac{1}{\mu} \sum_{s=0}^{k}\sum_{t=0}^{k} |v_n(s,t)| \leq \frac{c\gs^2}{\mu} 
\]
for some constant $c>0$. Combining, the bound \eqref{eq:stein1} reduces to
\begin{align}
d_{\cW}(\hat{L},\text{N}(0,1)) 
&\leq \sum_{s=0}^k\sum_{t=0}^k \frac{\mu}{\gs^2}  \E(| \E(S_{\gl_0}(s,t)\mid \vW) - u_n(s,t)| \mid \cI_{\gl_0} ) +\frac{c\gs}{\mu}\notag\\
&+ \left(\sqrt{\frac{c^2\gs}{\mu}} + \sum_{s=0}^k \sum_{t=0}^{k}\sqrt{\frac{\mu}{\gs^3}\var(S_{\gl_0}(s,t) \mid \cI_{\gl_0}) }\right)^2.\label{eq:stein2}
\end{align}

From \eqref{eq:vmratio} it follows that $\gs/\mu\to 0$ as $n\to\infty$. Moreover, for $st=0$ we have $|S_{\gl_0}(s,t)|\leq 1$ a.s. Note that 
\begin{align*}
\frac{\gs^2}{\mu} = n^{k-2+2/(k+1)+o(1)}
\text{ and }\frac{\gs^3}{\mu} = n^{2k-3+3/(k+1)+o(1)}. 
\end{align*}

Thus the case $st = 0$ is negligible and we are left to prove that
\begin{align*}
\gC_{1} &:= \sum_{s=1}^k\sum_{t=1}^k \frac{\mu}{\gs^2}  \E(| \E(S_{\gl_0}(s,t)\mid \vW) - u_n(s,t)| \mid \cI_{\gl_0} ) \to 0\\
\gC_{2}&:= \sum_{s=1}^k \sum_{t=1}^{k}\sqrt{\frac{\mu}{\gs^3}\var(S_{\gl_0}(s,t) \mid \cI_{\gl_0}) }\to0
\end{align*}
as $n\to\infty$. Clearly
\begin{align}\label{eq:varubd}
 \E(| \E(S_{\gl_0}(s,t)\mid \vW) - u_n(s,t)| \mid \cI_{\gl_0} ) \leq \sqrt{\var(S_{\gl_0}(s,t) \mid \cI_{\gl_0}) }.
\end{align}

Recall that,
\[
S_{\gl_{0}}(s,t):= \sum_{\gc\in \sS_{\gl_{0}}(s,t)} (\cI_\gc -  \cI_\gc(\vW^{\gl_0})) 
\]
where 
\[
\sS_{\gl}(s,t)=\{\gc\in\sS_{n}(k)\mid |\gl\cap\gc|=(k-s,k-t)\}.
\]
We start with the term $\gC_{1}$. We consider different cases depending on the values of $s,t$. Note that, $\E(S_{\gl_0}(s,t) \mid \cI_{\gl_0})=u_{n}(s,t)\ll \gs^2/\mu$ for $st<k^2$. Thus, heuristically for $st<k^2$, the contribution in $\gC_{1}$  should be $\ll 1$ as $n\to\infty$. Obviously the nontrivial case is when $s=t=k$. 
\\

\noindent{\bf Case 1.} $st>0, s+t\leq 2k-2$: In this case we have $u_{n}(s,t)\geq 0$ and thus
 \[
 \E( \cI_{\gc}(\vW^{\gl_0} )\mid \cI_{\gl_0}) \leq \E( \cI_\gc \mid \cI_{\gl_0})
\] 
for $\gc\in \sS_{\gl_0}(s,t)$. 
Now we have
\begin{align*}
&\E(| \E(S_{\gl_0}(s,t)\mid \vW) - u_n(s,t)| \mid \cI_{\gl_0} )\\
 &\leq \sum_{\gc\in\sS_{\gl_{0}}(s,t)} \E(\cI_\gc +  \cI_{\gc}(\vW^{\gl_{0}} )\mid \cI_{\gl_{0}}) + |u_n(s,t)|\\
 &= \sum_{\gc\in\sS_{\gl_{0}}(s,t)} \E(\cI_\gc +  \cI_{\gc}(\vW^{\gl_0}) \mid \cI_{\gl_0}) + \sum_{\gc\in\sS_{\gl_{0}}(s,t)} \E(\cI_\gc -  \cI_{\gc}(\vW^{\gl_0})\mid \cI_{\gl_0}) \\
 &= 2 {k\choose s}{k\choose t} {n-k\choose s}{n-k\choose t} \pr(\cI_{[s+1,s+k]\times[t+1,t+k]}\mid \cI_{[k]\times[k]}).
\end{align*}
Now using the results in case 3 and 5 from the proof of Theorem~\ref{thm:var} we have
\begin{align}\label{eq:condprob1}
\pr(\cI_{[s+1,s+k]\times[t+1,t+k]}\mid \cI_{[k]\times[k]}) \leq n^{-k + 2k(k-s)(k-t)/(2k^2-st) + o(1)}
\end{align}
and 
\[
2{k\choose s}{k\choose t} {n-k\choose s}{n-k\choose t} \pr(\cI_{[s+1,s+k]\times[t+1,t+k]}\mid \cI_{[k]\times[k]}) \leq \eps_{n}\gs^{2}/\mu
\]
where 
\[
\eps_{n}:= O((\log n/n^2)^{\frac{k-1}{(k+1)(k^2+2k-1)}}(\log n)^{2k-1}).
\]
Thus we have
\[
\frac{\mu}{\gs^2}\E(| \E(S_{\gl_0}(s,t)\mid \vW) - u_n(s,t)| \mid \cI_{\gl_0} ) \leq \eps_n.
\]

\noindent{\bf Case 2:} $s+t=2k-1$: This corresponds to the set of matrices which have exactly one row in common with $\lambda_0$ and no columns, or vice-vera. Without loss of generality assume the former case (the later is dealt with identically) so that $s=k-1,t=k$. By \eqref{eq:varubd} it is enough to prove that 
\begin{align*}
\E(S_{\gl_0}(k-1,k)^2\mid \cI_{\gl_0}) &\ll \gs^4/\mu^2.
\end{align*}
Note that
\begin{align*}
\E(S_{\gl_0}(k-1,k)\mid \cI_{\gl_0}) &=v_{n}(k-1,k)/\mu \ll \gs^2/\mu.
\end{align*}
We will write 
\[
\hat{\cI}_{\gc}:= \cI_\gc(\vW^{\gl_0}) \text{ and }\pr_{\gl_0}(\cdot)=\pr(\cdot\mid \cI_{\gl_{0}}).
\]

Note that, any matrix in $\sS_{\gl_{0}}(k-1,k)$ is contained in the sub matrix $[n]\times[k+1,n]$ with exactly one row with index in $[k]$. For two matrix indices $\gc,\gc'\in \sS_{{\gl_0}}(k-1,k)$ define $\cN(\gc,\gc')=(\ell,r,c)$ where $\ell=1$ if $\gc,\gc'$ share  a row in $[k]$ and $0$ otherwise; $r$ is the number of common rows between $\gc,\gc'$ in $[k+1,n]$ and $c$ is the number of common columns between $\gc,\gc'$. Note that 
\begin{align*}
&|\{(\gc,\gc')\mid \cN(\gc,\gc')=(\ell,r,c) \}| \\
&= k( (k-1)\ind\{\ell=0\} + \ind\{\ell=1\} ){n-k\choose k}{k\choose c}{n-2k\choose k-c}\\
&\qquad\qquad\qquad\qquad\qquad\qquad{n-k\choose k-1}{k-1\choose r}{n-2k+1\choose k-1-r}\\
&= O(n^{4k-2-r-c}).
\end{align*}
Thus we have
\begin{align*}
&\E(S_{\gl_0}(k-1,k)^2\mid \cI_{\gl_0}) \\
&\leq O_{k}(1) \sum_{\ell=0}^{1}\sum_{r=0}^{k-1}\sum_{c=0}^{k} n^{4k-2-r-c} \E( (\cI_\gc -  \hat{\cI}_{\gc}) (\cI_{\gc_{\ell,r,c}} - \hat{\cI}_{\gc_{\ell,r,c}})  \mid \cI_{\gl_{0}})
\end{align*}
where $\cN(\gc,\gc_{\ell,r,c})=(\ell,r,c)$.  Now for $ \gc,\gc'\in\sS_n(k)$  we have
\begin{align*}
&\E((\cI_\gc -  \hat{\cI}_{\gc}) \cdot (\cI_{\gc'} - \hat{\cI}_{\gc'})  \mid \cI_{\gl_{0}})\\
&= \pr_{\gl_0}(\cI_\gc \hat{\cI}_{\gc}^{c} \cI_{\gc'} \hat{\cI}_{\gc'}^{c}) 
- \pr_{\gl_0}(\cI_\gc \hat{\cI}_{\gc}^{c} \cI_{\gc'}^{c} \hat{\cI}_{\gc'})
- \pr_{\gl_0}(\cI_\gc^{c} \hat{\cI}_{\gc} \cI_{\gc'} \hat{\cI}_{\gc'}^{c})
+ \pr_{\gl_0}(\cI_\gc^{c} \hat{\cI}_{\gc} \cI_{\gc'}^{c} \hat{\cI}_{\gc'}).
\end{align*}

For $rc=0, r+c>1$ the contribution in $\E(S_{\gl_0}(k-1,k)^2\mid \cI_{\gl_0})  $ is bounded by $ O_{k}(1)n^{4k-2 -r-c}n^{-2k}\leq  O_{k}(1)n^{2k-4}$. To see this, consider the first term in the above equation. Here we require both $\gamma, \gamma^\prime$ to be locally optimal, in particular column optimal and thus must possess the largest $k$ row sums in their respective column set, each of which has probability (even conditioning on $\cI_{\lambda_0}$) of at most than $1/{n-2k \choose k}$. When $r+c=0$, one can prove that (using the method used in the proof of Theorem~\ref{thm:var} for $s=t=k$) 
\[
\E( (\cI_\gc -  \hat{\cI}_{\gc}) (\cI_{\gc_{\ell,r,c}} - \hat{\cI}_{\gc_{\ell,r,c}})  \mid \cI_{\gl_{0}})=O( n^{-2k - 2})
\]
and for $r+c=1$ 
\[
\E( (\cI_\gc -  \hat{\cI}_{\gc}) (\cI_{\gc_{\ell,r,c}} - \hat{\cI}_{\gc_{\ell,r,c}})  \mid \cI_{\gl_{0}})=O( n^{-2k - 1}).
\]
The $n^{-2k}$ term comes from the probability that both $\gc$ and $\gc_{\ell,r,c}$ are locally optimal and the $1/n$ improvement is coming from the fact that $\E( \cI_\gc -  \hat{\cI}_{\gc}  \mid \cI_{\gl_{0}})=O( n^{-k - 1}).$ Thus for $rc=0$, the total contribution is $O(n^{2k-4})$. 
When $rc\in [1,k(k-1)], \ell\in\{0,1\}$, the contribution  is 
\[
n^{4k-2-r-c}n^{-2k+\frac{2k(r+\ell)c}{2k^2-(k-r-\ell)(k-c)} }.
\]
The maximum power occurs for $r=c=\ell=1$ so that the contribution is bounded by
\[
n^{2k-4+\frac{2k(1+\ell)}{2k^2-(k-1-\ell)(k-1)}}\leq n^{2k-4 + 4k/(k^2+3k-2)}
\]
and $4k/(k^2+3k-2) = 4/(k+1) - 8(k-1)/((k+1)(k^2+3k-2))$. Thus combining everything we have
\begin{align*}
\E(S_{\gl_0}(k-1,k)^2\mid \cI_{\gl_0})
&=O(n^{2k-4 + 4/(k+1) - 8(k-1)/((k+1)(k^2+3k-2)) })\\
&= O(n^{- 8(k-1)/((k+1)(k^2+3k-2))}\cdot \gs^4/\mu^2).
\end{align*}

\noindent{\bf Case 3.} $s=t=k$. This corresponds to the set of matrices which have no common rows or columns with $\lambda_0$. We move to the proof of 
\[
\var(S_{\gl_0}(k,k)\mid \cI_{\gl_0})\ll \gs^4/\mu^2.
\]
Note that, any matrix in $\sS_{\gl_{0}}(k,k)$ is contained in the sub matrix $[k+1,n]\times[k+1,n]$. Also we have
\begin{align*}
&|\{\gc,\gc'\in\sS_{\gl_0}(k,k)\mid |\gc\cap\gc'|=(r,c) \}| \\
&= {n-k\choose k}{k\choose r}{n-2k\choose k-r}{n-k\choose k}{k\choose c}{n-2k\choose k-c}
= O(n^{4k-r-c}).
\end{align*}
Thus we have
\begin{align*}
&\var(S_{\gl_0}(k,k)^2\mid \cI_{\gl_0}) \\
&\leq O(1) \sum_{\ell=0}^{1}\sum_{r=0}^{k-1}\sum_{c=0}^{k} n^{4k-r-c} \cov( \cI_\gc -  \hat{\cI}_{\gc}, \cI_{\gc_{r,c}} - \hat{\cI}_{\gc_{r,c}}  \mid \cI_{\gl_{0}})
\end{align*}
where $\cN(\gc,\gc_{r,c})=(r,c)$.  
For $r=c=k$, the total contribution in the variance is 
\[
O(n^{2k}n^{-k-1})\leq O(n^{2k-4+3/(k+1)}). 
\]
Note that here $n^{-k-1}$ term comes from the fact that $\cI_{\gc}$ has probability $n^{-k}$ and after changing the elements in the first $k$ rows and $k$ columns $\gc$ is no longer locally optimal implies one of the new rows or columns beat $\gc$ which has probability $1/n$.  In particular, similar to the variance calculation for $L_{n}$, for all $rc=0, r+c>1$ the contribution is
\[
\leq n^{4k-r-c -2k-2}\leq n^{2k-4}. 
\]
and for all $rc\geq 1$ the contribution is
\begin{align*}
&n^{4k-r-c}n^{-2k+2krc/(2k^2-(k-r)(k-c)) - 2/(1+\max\{r,c\}/k)} \\
&\qquad\leq n^{ - 2(k-1)/((k+1)(k^2+2k-1))}\gs^4/\mu^2
\end{align*}
where the largest exponent occurs for $r=c=1$.
Thus the only terms remaining to bound are when $r+c=1$ and $r+c=0$. We look at the $r+c=0$ case first. We want to bound 
\[
\sum_{\gc,\gc'\in\sS_{\gl_0}(k,k), |\gc\cap\gc'|=(0,0)} \cov(\cI_\gc -  \hat{\cI}_{\gc},\cI_{\gc'} - \hat{\cI}_{\gc'}  \mid \cI_{\gl_{0}}). 
\] 
Number of summands in the above sum is $O(n^{4k})$. Now after some simplification it is easy to see that we need to bound 
\[
\cov(\cI_{\gc}\hat{\cI}^c_{\gc}, \cI_{\gc'}\hat{\cI}_{\gc'}^{c}  \mid \cI_{\gl_{0}})
\]
which, by Lemma~\ref{lem:bigmax} can be bounded by
\[
n^{-2k-2 - 2k/(k+1)} = n^{-2k-4 + 2/(k+1)}.
\]
Thus the total contribution is 
\[
n^{4k-2k-4 +2/(k+1)}= n^{2k-4+4/(k+1)-2/(k+1)}=n^{-2/(k+1)}\gs^4/\mu^2.
\]
Similarly for the $r=1,c=0$ case the total contribution is 
\[
n^{4k-1} n^{-2k-2-1}=n^{2k-4}=n^{-4/(k+1)}\gs^4/\mu^2.
\]
Combining everything we have $\gC_{1}\to0$ as $n\to\infty$.

Now we show that 
\[
\gC_{2}=\sum_{s=1}^k \sum_{t=1}^{k}\sqrt{\frac{\mu}{\gs^3}\var(S_{\gl_0}(s,t) \mid \cI_{\gl_0}) }\to  0
\]
 as $n\to\infty$. Note that, $\E(S_{\gl_0}(s,t) \mid \cI_{\gl_0})=u_{n}(s,t)\leq \gs^2/\mu$ for all $s,t$. Heuristically for fixed $s,t$ the contribution in  $\gC_{2}$ should be $\le\sqrt{\mu/\gs^3\cdot \gs^4/\mu^2}= \sqrt{\gs/\mu}\to 0$ as $n\to\infty$. We leave the  proof  to the interested reader where the proof follows exactly the same steps used in case $1$--$3$ of the proof of $\gC_{1}\to0$. 
Combining everything finally we have the result that
\begin{align}
d_{\cW}(\hat{L},\text{N}(0,1)) \to 0
\end{align}
as $n\to\infty$.
\qed


\vskip.1in
\noindent{\bf Acknowledgments.}  PD is grateful for the hospitality of the Department of Statistics and Operations research, University of North Carolina, Chapel Hill, where much of the research was done. SB was partially supported by NSF grant DMS-1105581. PD was supported by Simons Postdoctoral Fellowship. AN was partially supported by NSF grant DMS-0907177.

\bibliographystyle{plain}
\bibliography{matrix}

%
%

\end{document}